\newtheorem{thm}{Theorem}[section]
\newtheorem{lem}[thm]{Lemma}
\newtheorem{cor}[thm]{Corollary}
\newtheorem{prop}[thm]{Proposition}
\newtheorem{rem}[thm]{Remark}
\DeclareMathAlphabet{\mathpzc}{OT1}{pzc}{m}{it}
\numberwithin{equation}{section}
\newcommand{\Wqb}{W_{q,D}}
\newcommand{\Wqq}{\Wqb^{2-2/q}}
\newcommand{\Wqqp}{\Wqb^{2-2/q,+}}
\newcommand{\R}{\mathbb{R}}
\newcommand{\N}{\mathbb{N}}
\newcommand{\A}{\mathbb{A}}
\newcommand{\X}{\mathbb{X}}
\newcommand{\Wq}{\mathbb{W}_q}
\newcommand{\Wqh}{\hat{\mathbb{W}}_q}
\newcommand{\Wqd}{\dot{\mathbb{W}}_q^+}
\newcommand{\Lq}{\mathbb{L}_q}
\newcommand{\ml}{\mathcal{L}}
\newcommand{\mk}{\mathcal{K}}
\newcommand{\Om}{\Omega}
\newcommand{\ve}{\varepsilon}
\newcommand{\rd}{\mathrm{d}}
\newcommand{\divv}{\mathrm{div}}
\newcommand{\bqn}{\begin{equation}}
\newcommand{\eqn}{\end{equation}}
\newcommand{\bqnn}{\begin{equation*}}
\newcommand{\eqnn}{\end{equation*}}
\newcommand{\bear}{\begin{eqnarray}} 
\newcommand{\eear}{\end{eqnarray}} 
\newcommand{\bean}{\begin{eqnarray*}} 
\newcommand{\eean}{\end{eqnarray*}} 
\newcommand{\bs}{\begin{split}}
\newcommand{\es}{\end{split}}
\newcommand{\dimens}{\mathrm{dim}}
\newcommand{\codim}{\mathrm{codim}}
\newcommand{\kk}{\mathrm{ker}}
\newcommand{\im}{\mathrm{rg}}
\newcommand{\dhr}{\mathrel{\lhook\joinrel\relbar\kern-.8ex\joinrel\lhook\joinrel\rightarrow}} 
\begin{document}

\title{
Positive solutions of some parabolic system with cross-diffusion and nonlocal initial conditions}

\author[addrHannover]{Christoph Walker}
\ead{walker@ifam.uni-hannover.de}
\address[addrHannover]{Leibniz Universit\"at Hannover, Institut f\" ur Angewandte Mathematik, \\ Welfengarten 1, D--30167 Hannover, Germany}

\begin{abstract}
The paper is concerned with a system consisting of two coupled nonlinear parabolic equations with a cross-diffusion term, where the solutions at positive times define the initial states. The equations arise as steady state equations of an age-structured predator-prey system with spatial dispersion. Based
on unilateral global bifurcation methods for Fredholm operators and on maximal regularity for parabolic equations, global bifurcation of positive solutions is derived.
\end{abstract}

\begin{keyword}
cross-diffusion, age structure, global bifurcation, Fredholm operator, maximal regularity
\end{keyword}

\maketitle

\section{Introduction}

Consider the situation that an age-structured prey and an age-structured predator population inhabit the same spatial region $\Om$, that the individuals of both populations undergo spatial fluctuation and that the predator population exerts a repulsive pressure on the prey population. If $u=u(t,a,x)\ge 0$ and $v=v(t,a,x)\ge 0$ denote the  density of the prey and the predator population, respectively, at time $t\ge 0$, age $a\in [0,a_m)$ for a maximal age $a_m>0$, and spatial position $x\in\Om$, a simple model reads
\begin{align}
\partial_t u+\partial_a u-\Delta_x\left(\left(\delta_1+\gamma v\right)u\right) &=-\alpha_1u^2-\alpha_2 uv\ , \quad t>0\ ,\quad a\in (0,a_m)\ ,\quad x\in\Om\ ,  \label{1} \\
\partial_t v+\partial_a v-\delta_2\Delta_x v&=-\beta_1 v^2+\beta_2 vu\ , \quad t>0\ ,\quad a\in (0,a_m)\ ,\quad x\in\Om\ .\label{2} 
\end{align}
These equations are subject to the nonlocal age boundary conditions
\begin{align}
u(t,0,x)&=\int_0^{a_m}B_1(a)\, u(t,a,x)\, \rd a\ , \quad t>0\ ,\quad  x\in\Om\ ,\label{3}\\
v(t,0,x)&=\int_0^{a_m}B_2(a)\, v(t,a,x)\, \rd a\ ,  \quad t>0\ ,\quad x\in\Om\ ,\label{4}
\end{align}
the spatial boundary conditions
\begin{align}
u(t,a,x)&=0\ , \quad  t>0\ ,\quad a\in (0,a_m)\ ,\quad x\in\partial\Om\ ,\label{5}\\
v(t,a,x)&=0\ , \quad  t>0\ ,\quad a\in(0,a_m)\ ,\quad x\in\partial\Om\ ,\label{6}
\end{align}
and are supplemented with time initial conditions.
The $\Delta_x$-term in \eqref{1} describes spatial movement of prey individuals. Besides intrinsic dispersion with coefficient $\delta_1>0$, it reflects an increase of the dispersive force on the prey by repulsive interference with an increase of the predator population. Here, $\gamma\ge 0$ is the predator population pressure coefficient. We refer to \cite{ShigesadaEtAl} for a derivation of such kind of models (without age-structure). The right hand sides of \eqref{1} and \eqref{2} take into account intra- and inter-specific interactions of the two populations with positive coefficients $\alpha_1, \alpha_2, \beta_1, \beta_2>0$. Equations \eqref{5} and \eqref{6} describe creation of new individuals 
with nonnegative birth rates $B_1$ and $B_2$. 
The reader is referred to \cite{WebbSpringer} and the references therein for further information about linear and nonlinear age-structured population equations with (and without) spatial dispersal. Although such models have a long history, there does not seem to be much literature about equations with nonlinear diffusion. The well-posedness of equations \eqref{1}-\eqref{6} might be derived within the general framework presented in \cite{WalkerDCDS} though the results therein do not directly apply due to the cross-diffusion term.

We shall remark that the model considered herein is a rather simple biological model and there may be more accurate models, e.g. with different maximal ages, nonlocal dependences in the reaction terms etc. The aim of the present paper is thus rather to provide a mathematical framework to treat parabolic equations with cross-diffusion and nonlocal initial conditions. Namely, the present paper is dedicated to positive equilibrium (i.e. time-independent) solutions to \eqref{1}-\eqref{6} which shall be established based on global bifurcation methods. We write the birth rates in the form $B_1(a)=\eta b_1(a)$ and $B_2(a)=\xi b_2(a)$, where  $b_1$, $b_2$ are some fixed birth profiles and the parameters $\eta$, $\xi$ measuring the intensities of the fertility shall serve as bifurcation parameters. 
Aiming at a simple and compact notation, we let $b:=b_1=b_2$ and $\delta_1=\delta_2=1$. 
We emphasize that these simplifications are made merely for the sake of readability and do not impact in any way on the mathematical analysis to follow. To shorten notation further, we shall agree upon the following convention: given a function defined on $J:=[0,a_m]$ and denoted by a lower case letter, say $u$ or $v$, we use the corresponding capital letter $U$ or $V$ to denote its age-integral with weight $b$, i.e.,
\bqn\label{7}
U:=\int_0^{a_m} b(a) u(a)\,\rd a\ ,\quad V:=\int_0^{a_m} b(a) v(a)\,\rd a\ .
\eqn
Depending on the values of the parameters $\eta$ and $\xi$, we are looking for nonnegative and nontrivial functions $u=u(a,x)$ and $v=v(a,x)$ satisfying the nonlinear parabolic system with cross-diffusion
\begin{align}
\partial_a u-\Delta_x\big((1+\gamma v)u\big) &=-\alpha_1u^2-\alpha_2 uv\ ,\quad  a\in (0,a_m)\ ,\quad x\in\Om\ ,\label{1a}\\
\partial_a v-\Delta_x v&=-\beta_1 v^2+\beta_2 vu\ ,\,\quad  a\in (0,a_m)\ ,\quad x\in\Om\ ,\label{2a}
\end{align}
subject to the nonlocal initial conditions
\begin{align}
u(0,x)&=\eta U\ , \quad   x\in\Om\ ,\label{3a}\\
v(0,x)&=\xi V\ ,  \quad x\in\Om\ ,\label{4a}
\end{align}
and spatial Dirichlet boundary conditions
\begin{align}
u(a,x)&=0\ , \quad  a\in (0,a_m)\ ,\quad x\in\partial\Om\ ,\label{5a}\\
v(a,x)&=0\ , \quad  a\in(0,a_m)\ ,\quad x\in\partial\Om\ .\label{6a}
\end{align}
Clearly, of particular interest are {\it coexistence states}, that is, solutions $(u,v)$ with both components $u$ and $v$ nontrivial and nonnegative. 

Based on bifurcation techniques, existence of equilibrium solutions was established in \cite{WalkerJRAM,WalkerJFA} for similar systems with linear diffusion and in \cite{WalkerSIMA, WalkerJDE,WalkerAMPA} for a single equation with nonlinear diffusion. Except for \cite{WalkerAMPA}, the bifurcation parameter was also chosen to be a measure for the intensity of the fertility as above. Prior to the just cited papers, bifurcation methods were used in \cite{DelgadoEtAl2} to derive positive equilibrium solutions for a single equations with linear diffusion. We also refer to \cite{Langlais} where the large time behavior of population dynamics with age-dependence and linear spatial diffusion was analyzed. 

More attention attracted than the age-structured parabolic equations so far have related elliptic systems of the form 
$$
 -\Delta_x [(1+\gamma v)u]=u(\mu -u\pm cv)\ ,\quad -\Delta_x v=v(\lambda-v\pm bu)
$$
in $\Om$ subject to Dirichlet conditions on $\partial\Om$ with positive constants $\mu, \lambda, b, c$; 
both for the case of linear diffusion $\gamma=0$ (e.g., see \cite{BlatBrown1,BlatBrown2,DelgadoLGSuarez,Lou,Pao} and the references therein) and the case with cross-diffusion $\gamma>0$ (e.g., see \cite{DelgadoMontenegroSuarez,Horstmann,Kuto1,KutoYamada,ShiWang} and the references therein). It is worthwhile to remark that for such elliptic equations with cross-diffusion, the transformation $z:=(1+\gamma v)u$ leads to a semilinear elliptic system for $z$ and $v$, for which, when written in the form $(z,v)=S(z,v)$, the corresponding solution operator $S$ enjoys suitable compactness properties. Moreover, for such a system, positivity and a-priori bounds of solutions may be derived using the maximum principle, e.g. \cite{DelgadoMontenegroSuarez,Kuto1,ShiWang}. For the nonlocal parabolic equations \eqref{7}-\eqref{6a} under consideration herein, however, a corresponding transformation still yields nonlinear second order terms with respect to the spatial variable (or first order time derivatives). Consequently, the underlying solution operator does not enjoy similar compactness properties. As we are interested in global bifurcation of positive solutions, it is thus not clear how to apply directly the unilateral global bifurcation methods of \cite{LopezGomezChapman,Rabinowitz} as in \cite{WalkerJRAM}, which require compactness of the underlying solution operators. To overcome this deficiency, we shall invoke recent results of Shi $\&$ Wang \cite{ShiWang} on unilateral global bifurcation for Fredholm operators, which are based on L\'opez-G\'omez's interpretation \cite{LopezGomezChapman} of the global alternative of Rabinowitz \cite{Rabinowitz} and on the global bifurcation results of Pejsachowicz $\&$ Rabier \cite{PejsachowiczRabier}. These results yield a (global) continuum of positive solutions. 
We shall also point out that, due to the cross-diffusion term, no comparison principle in the spirit of \cite[Lem.3.2]{WalkerJRAM} is available. In some cases, the lack of such a comparison principle prevents us from determining which of the possible alternatives the constructed continuum of coexistence states satisfies (see Theorem~\ref{T22} and Theorem~\ref{T222} below).

\section{Main Results}\label{main results}

\noindent We suppose throughout this paper that the birth profile
\bqn\label{10}
b\in L_\infty^+((0,a_m))\ \text{with}\ b(a)>0\ \text{for $a$ near $a_m$}
\eqn
is normalized such that
\bqn\label{6n}
\int_0^{a_m}b(a)e^{-\lambda_1 a}\,\rd a=1\ ,
\eqn
where $\lambda_1>0$ denotes the principal eigenvalue of $-\Delta_x$ on $\Om$ subject to Dirichlet boundary conditions on~$\partial\Om$. Before stating our results on coexistence states in more detail, we first recall some auxiliary results from \cite{WalkerJRAM} about semi-trivial states, that is, about solutions $(u,v)$ with one vanishing component. Taking for instance $v\equiv 0$ in \eqref{1a}-\eqref{2a} and using convention \eqref{7}, we obtain the reduced problem
\bqn\label{Aee1}
\partial_a u-\Delta_x u=-\alpha_1u^2\ ,\quad u(0,\cdot)=\eta U\ ,
\eqn
subject to Dirichlet boundary conditions. 
Introducing, for $q\in (n+2,\infty)$ fixed, the solution space 
$$
\Wq:=L_q((0,a_m),\Wqb^2(\Om))\cap W_q^1((0,a_m),L_q(\Om))\ ,
$$
where $$\Wqb^2(\Om):=\{u\in W_q^2(\Om)\,;\, u=0\, \text{on}\, \partial\Om\}\ ,$$ and letting $\Wq^+$ denote its positive cone, the following result was proved in \cite[Thm.2.1, Cor.3.3, Lem.3.6, Cor.3.7]{WalkerJRAM}:

\begin{thm}\label{A1}
Suppose \eqref{10} and \eqref{6n}.
For each $\eta>1$ there is a unique solution $u_\eta\in\Wq^+\setminus\{0\}$ to equation \eqref{Aee1}. The mapping $(\eta\mapsto u_\eta)$ belongs to $C^\infty((1,\infty),\Wq)$  and satisfies $\|u_\eta\|_{\Wq}\rightarrow 0$ as $\eta\rightarrow 1$ and $\|u_\eta\|_{\Wq}\rightarrow\infty$ as $\eta\rightarrow\infty$. For $\eta>1$ and $a\in J$, the derivative $\partial_\eta u_\eta (a)$ is a (strictly) positive function on $\Om$.
If $\eta_1 > \eta_2$, then $u_{\eta_1}\ge u_{\eta_2}$. Finally, if $\eta\le 1$, then \eqref{Aee1} has no solution in $\Wq^+\setminus\{0\}$.
\end{thm}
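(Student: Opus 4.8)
This is recalled from \cite{WalkerJRAM}; for completeness we sketch the argument, parts of which recur below. The plan is to recast \eqref{Aee1} as a fixed point equation via maximal regularity, bifurcate from $\eta=1$, propagate the branch globally with the help of a-priori bounds, and extract uniqueness, smoothness and the sign of $\partial_\eta u_\eta$ from the sub-homogeneity of the solution operator. Since $q>n+2$, the embedding $\Wq\hookrightarrow C(J\times\overline{\Om})$ holds and is compact, so $u\mapsto u^2$ is compact from $\Wq$ into $\Lq:=L_q((0,a_m),L_q(\Om))$; maximal $L_q$-regularity for $\partial_a-\Delta_x$ on $(0,a_m)\times\Om$ under homogeneous Dirichlet conditions provides a bounded, order preserving linear solution operator $\ml:(f,\vp)\mapsto u$ for $\partial_a u-\Delta_x u=f$, $u(0)=\vp$, with $(f,\vp)\in\Lq\times\Wqq(\Om)$; and $u\mapsto\eta U=\eta\int_0^{a_m}b\,u\,\rd a$ is compact from $\Wq$ into $\Wqq(\Om)$, mapping $\Wq$ boundedly into $\Wqb^2(\Om)\hookrightarrow\hookrightarrow\Wqq(\Om)$. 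Thus a nonnegative solution of \eqref{Aee1} is precisely a zero in $(0,\infty)\times\Wq^+$ of $\Phi(\eta,u):=u-\ml(-\alpha_1 u^2,\eta U)$, a compact perturbation of the identity on $\Wq$.

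At $\eta=1$, expanding $D_u\Phi(1,0)v=v-\ml\big(0,\int_0^{a_m}b\,v\,\rd a\big)$ in the Dirichlet eigenbasis $(\lambda_k,\vp_k)$ of $-\Delta_x$ and using \eqref{10} and \eqref{6n} (so that $\int_0^{a_m}b(a)e^{-\lambda_1 a}\,\rd a=1$ while $\int_0^{a_m}b(a)e^{-\lambda_k a}\,\rd a\in(0,1)$ for $k\ge2$) shows that $D_u\Phi(1,0)$ is Fredholm of index zero with one-dimensional kernel spanned by the positive function $(a,x)\mapsto e^{-\lambda_1 a}\vp_1(x)$, and the Crandall--Rabinowitz transversality condition follows by a direct computation; since $\alpha_1>0$ the branch bends to the side $\eta>1$. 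This yields a $C^\infty$ local curve of solutions emanating from $(1,0)$ and lying in $\Wq^+\setminus\{0\}$ for $\eta$ slightly above $1$. For a-priori bounds, rewriting \eqref{Aee1} as $\partial_a u-\Delta_x u+(\alpha_1 u)u=0$, the parabolic strong maximum principle forces any nontrivial $u\in\Wq^+$ to be strictly positive in $\Om$ for every $a\in J$ (so $U>0$, hence $u(0)=\eta U>0$); and since the spatially constant $a\mapsto(\alpha_1 a)^{-1}$ is a supersolution of \eqref{Aee1} dominating $u$ for small $a$, one gets $u\le\min\{\|u(0)\|_\infty,(\alpha_1 a)^{-1}\}$, whence $\|U\|_\infty\le C\big(1+\ln_+(\eta\,\alpha_1 a_m\|U\|_\infty)\big)$; sublinearity of $\ln$ bounds $\|U\|_\infty$, hence $\|u\|_\infty\le\eta\|U\|_\infty$, and a short bootstrap via parabolic smoothing then bounds $\|u\|_{\Wq}$, all locally uniformly for $\eta$ in compact subsets of $(0,\infty)$.

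The unilateral global bifurcation theorem (\cite{Rabinowitz}, in the form of \cite{LopezGomezChapman}) now produces an unbounded continuum $\mcd\subset(0,\infty)\times(\Wq^+\setminus\{0\})$ through $(1,0)$; the a-priori bound rules out blow-up at finite $\eta$, and $\eta=1$ is the only point of the trivial line from which nonnegative solutions can bifurcate (the kernels at the other resonances $\eta=1/\int_0^{a_m}b(a)e^{-\lambda_k a}\,\rd a$, $k\ge2$, being spanned by sign-changing eigenfunctions), so $\mcd$ is unbounded in the $\eta$-direction and contains a solution for every $\eta>1$. There is none for $\eta\le1$: if $u\in\Wq^+\setminus\{0\}$ solved \eqref{Aee1}, the elementary comparison $u(a)\le e^{a\Delta_x}(\eta U)$ gives $U\le\eta L U$ with $L\vp:=\int_0^{a_m}b(a)e^{a\Delta_x}\vp\,\rd a$ a compact positive operator of spectral radius $1$, so iterating $U\le\eta^nL^nU$ forces $U=0$ when $\eta<1$, while at $\eta=1$ the strict inequality $U<LU$ tested against the positive eigenfunctional of $L^{*}$ (with eigenvalue $1$) is a contradiction. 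Together with the uniqueness below, $\mcd=\{(\eta,u_\eta):\eta>1\}$.

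For fixed $\eta$ the Cauchy solution operator $T_\eta\vp:=\eta\int_0^{a_m}b\,u(\vp)\,\rd a$, $u(\vp)$ solving $\partial_a u-\Delta_x u=-\alpha_1 u^2$, $u(0)=\vp$, is increasing and strictly sub-homogeneous on the positive cone: for $t\in(0,1)$, $t\,u(\vp)$ is a subsolution of the $u(t\vp)$-problem (strictly wherever $u(\vp)>0$, as $\alpha_1>0$), so $u(t\vp)>t\,u(\vp)$ and $T_\eta(t\vp)>tT_\eta(\vp)$. The classical uniqueness principle for increasing strictly sub-homogeneous maps then gives at most one nontrivial fixed point, so $u_\eta$ is unique; the same principle forces $r(\eta\mk)<1$, where $\eta\mk$ (with $\mk h:=\int_0^{a_m}b\,\Lambda h\,\rd a$ and $\Lambda$ the positive solution operator of $\partial_a z-\Delta_x z+2\alpha_1 u_\eta z=0$, $z(0)=h$) is the positive compact linearization of $T_\eta$ at its fixed point $u_\eta(0)$. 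Hence $D_u\Phi(\eta,u_\eta)$ is an isomorphism, the implicit function theorem gives $(\eta\mapsto u_\eta)\in C^\infty((1,\infty),\Wq)$, and differentiating \eqref{Aee1} shows that $w:=\partial_\eta u_\eta$ solves the linearized nonlocal problem with the strictly positive inhomogeneity $U_\eta:=\int_0^{a_m}b\,u_\eta\,\rd a$ in the initial condition, i.e.\ $w(0)=(\mathrm{id}-\eta\mk)^{-1}U_\eta=\sum_{n\ge0}(\eta\mk)^{n}U_\eta>0$, whence $w>0$ in $\Om$ for all $a\in J$; integrating in $\eta$ gives $u_{\eta_1}\ge u_{\eta_2}$ for $\eta_1>\eta_2$. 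Finally $\|u_\eta\|_{\Wq}\to0$ as $\eta\to1^+$ is read off the local curve together with uniqueness, and $\|u_\eta\|_{\Wq}\to\infty$ as $\eta\to\infty$ because $u_\eta(0)=\eta U_\eta\ge\eta\,U_{\eta_0}$ for any fixed $\eta_0\in(1,\eta)$ while $\|u_\eta(0)\|_{\Wqq(\Om)}\le C\|u_\eta\|_{\Wq}$. The main obstacles I anticipate are the a-priori upper bound — the only easy comparison, $u\le e^{a\Delta_x}(\eta U)$, controls $U$ by $\eta^{-1}$ times an operator of spectral radius $1$ and is useless for $\eta>1$, so one must genuinely use the dissipative term $-\alpha_1u^2$ via $u\le(\alpha_1a)^{-1}$ and the resulting logarithmic estimate — and the non-degeneracy (strict linear stability) of $u_\eta$ drawn from sub-homogeneity, on which both the smoothness and the sign of $\partial_\eta u_\eta$ rest; the remainder is routine bifurcation-theoretic bookkeeping.
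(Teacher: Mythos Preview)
The paper does not prove Theorem~\ref{A1}; it merely quotes it from \cite[Thm.~2.1, Cor.~3.3, Lem.~3.6, Cor.~3.7]{WalkerJRAM}. So there is no ``paper's own proof'' to compare against, and your opening sentence correctly flags this. Your sketch is a faithful reconstruction of the argument in the cited reference: local Crandall--Rabinowitz bifurcation at $\eta=1$ using the normalization \eqref{6n}, global continuation via Rabinowitz/L\'opez-G\'omez, $L^\infty$ a-priori bounds from the supersolution $(\alpha_1 a)^{-1}$, nonexistence for $\eta\le 1$ by comparison with the heat semigroup and Krein--Rutman, and uniqueness plus nondegeneracy from strict sub-homogeneity of the fixed-point map.

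Two small points worth tightening. First, the iteration $U\le \eta^n L^n U$ for $\eta<1$ is not by itself conclusive, since $\|L^n\|$ need not stay bounded when $r(L)=1$; the clean argument is exactly the one you give for $\eta=1$, namely pair the (strict) inequality $U<\eta LU$ with the positive eigenfunctional of $L^*$ to force $\eta>1$ in one stroke. Second, your phrase ``the same principle forces $r(\eta\mathcal{K})<1$'' is correct but opaque: the sharpest route is the one the present paper later uses implicitly via \eqref{sp} and Lemma~\ref{A2}, namely $\mathcal{K}=H_{[2\alpha_1 u_\eta]}$, and since $2\alpha_1 u_\eta>\alpha_1 u_\eta$ one has $r(H_{[2\alpha_1 u_\eta]})<r(H_{[\alpha_1 u_\eta]})=1/\eta$, hence $r(\eta\mathcal{K})<1$. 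With these clarifications your sketch is sound and matches the approach of \cite{WalkerJRAM}.
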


\noindent Given $\xi>1$, we let $v_\xi\in\Wq^+\setminus\{0\}$ denote the unique solution to the corresponding equation for $v$ when taking $u\equiv 0$ in \eqref{1a}-\eqref{2a}. Though we shall work in the solution space $\Wq$, we remark that solutions to \eqref{7}-\eqref{6a}, so in particular $u_\eta$ and $v_\xi$, are smooth with respect to both $a\in J$ and $x\in\Om$ (see Lemma~\ref{A44} below).

\subsection{Bifurcation with Respect to the Parameter $\eta$}

We first shall keep $\xi$ fixed and regard $\eta$ as bifurcation parameter. We thus write $(\eta,u,v)$ for solutions to \eqref{7}-\eqref{6a} with $u$, $v$ belonging to $\Wq^+$. Theorem~\ref{A1} entails, for any $\xi\ge 0$, the semi-trivial branch
\bqn\label{k1}
\mathfrak{B}_1:=\{(\eta,u_\eta,0)\,;\, \eta> 1\}\subset\R^+\times(\Wq^+\setminus\{0\})\times\Wq^+
\eqn
of solutions. For $\xi>1$, an additional semi-trivial branch
\bqn\label{k2}
\mathfrak{B}_2:=\{(\eta,0,v_\xi)\,;\, \eta\ge 0\}\subset\R^+\times\Wq^+\times(\Wq^+\setminus\{0\})\ 
\eqn
exists. In the latter case, a continuum of coexistence states bifurcates from $\mathfrak{B}_2$:

\begin{thm}\label{T1}
Suppose \eqref{10}-\eqref{6n} and let $\xi>1$. There exists $\eta_0:=\eta_0(\xi)>0$ such that an unbounded continuum $\mathfrak{C}$ emanates from $(\eta_0,0,v_\xi)\in\mathfrak{B}_2$, and
$$
\mathfrak{C}\setminus\{(\eta_0,0,v_\xi)\}\subset\R^+\times(\Wq^+\setminus\{0\})\times (\Wq^+\setminus\{0\})
$$
consists of coexistence states $(\eta,u,v)$ to \eqref{7}-\eqref{6a}. 
Near the bifurcation point $(\eta_0,0,v_\xi)$, $\mathfrak{C}$ is a continuous curve. There is no other bifurcation point on $\mathfrak{B}_2$ or on $\mathfrak{B}_1$ to positive coexistence states.
\end{thm}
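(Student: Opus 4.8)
The plan is to recast \eqref{7}-\eqref{6a} as an abstract equation $F(\eta,w)=0$ for $w=(u,v)$ in a suitable open subset of $\R\times\Wq\times\Wq$, linearize along the semi-trivial branch $\mathfrak{B}_2$, and then apply the unilateral global bifurcation theorem of Shi~\&~Wang \cite{ShiWang} for Fredholm operators. First I would fix $\xi>1$ and set $\vr:=v_\xi$. Writing $v=\vr+\phi$, the system for $(u,\phi)$ has the form $F(\eta,u,\phi)=0$ where, by maximal $L_q$-regularity for the parabolic operators $\partial_a-\Delta_x$ on $(0,a_m)\times\Om$ with the nonlocal initial conditions \eqref{3a}-\eqref{4a}, the principal part is an isomorphism from $\Wq$ onto $L_q((0,a_m)\times\Om)$ (this is precisely the functional-analytic setting underlying Theorem~\ref{A1}, taken from \cite{WalkerJRAM}). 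One checks that $F$ is a $C^\infty$ map and, crucially, that the partial Fréchet derivative $D_{(u,\phi)}F(\eta,0,0)$ is a Fredholm operator of index zero for every $\eta$; this follows because it is a compact perturbation (the reaction terms and the cross-diffusion contribution $-\gamma\Delta_x(\vr\,u)$ are lower order relative to the maximal-regularity norm, hence relatively compact) of the invertible principal part. The cross-diffusion term does destroy compactness of a solution operator of Krein--Rutman type, which is exactly why one cannot invoke \cite{LopezGomezChapman,Rabinowitz} directly and must use the Fredholm framework of \cite{PejsachowiczRabier,ShiWang} instead.

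Next I would identify the bifurcation point. Since the $\phi$-equation decouples from $u$ at $(u,\phi)=(0,0)$ (the coupling $\beta_2\vr u$ is linear in $u$ and the quadratic terms vanish), the linearization $D_{(u,\phi)}F(\eta,0,0)$ is lower triangular: its kernel forces $\phi$ to solve a homogeneous linear problem that has only the trivial solution for $\xi>1$ (this is where the strict positivity from Theorem~\ref{A1}, applied to $v_\xi$, enters), so the kernel is carried by the $u$-component, which must solve the linear nonlocal eigenvalue problem
\bqn\label{eq:lin-eta}
\partial_a u-\Delta_x\big((1+\gamma\vr)u\big)=-\alpha_2\vr u\ ,\qquad u(0,\cdot)=\eta U\ ,
\eqn
subject to Dirichlet conditions. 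A Krein--Rutman / characteristic-equation argument (as in \cite{WalkerJRAM}) shows that \eqref{eq:lin-eta} has a one-dimensional space of positive solutions precisely for one value $\eta=\eta_0(\xi)>0$, with the corresponding eigenfunction $u_0>0$ and a transversality (simple-eigenvalue) condition $D_\eta D_{(u,\phi)}F(\eta_0,0,0)[u_0,0]\notin\operatorname{rg}D_{(u,\phi)}F(\eta_0,0,0)$, which one verifies by pairing against the principal eigenfunction of the adjoint problem. This yields, via \cite{ShiWang}, a global continuum $\mathfrak{C}$ emanating from $(\eta_0,0,v_\xi)$ that is, near the bifurcation point, a $C^1$ curve with the nontrivial direction $(u_0,0)$, so that $u\not\equiv0$ along $\mathfrak{C}$ near the bifurcation point.

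It remains to (i) show $\mathfrak{C}$ stays in the positive cone and consists of genuine coexistence states, (ii) rule out other bifurcation points on $\mathfrak{B}_1$ and $\mathfrak{B}_2$, and (iii) establish unboundedness. For (i): near $(\eta_0,0,v_\xi)$ the sign of $u$ is inherited from $u_0>0$, and positivity of $v$ near $\vr=v_\xi>0$ is automatic; a standard connectedness argument (using the strong maximum principle / positivity of the parabolic solution operator in $a$, together with the fact that $u\equiv0$ would force $v\equiv v_\xi$ and $v\equiv0$ would force $u\equiv u_\eta$, i.e. a return to a semi-trivial branch) shows the positivity is preserved along the whole continuum away from the semi-trivial branches — this is the unilateral part of \cite{ShiWang}. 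For (ii): any coexistence state bifurcating from $\mathfrak{B}_1=\{(\eta,u_\eta,0)\}$ would make $\xi$ an eigenvalue of the linearized $v$-equation $\partial_a v-\Delta_x v=\beta_2 u_\eta v$, $v(0,\cdot)=\xi V$, which, for the fixed $\xi>1$ considered here, can be excluded by a characteristic-equation monotonicity argument as in \cite{WalkerJRAM}; similarly $\eta_0$ is the only bifurcation value on $\mathfrak{B}_2$ because \eqref{eq:lin-eta} has a positive eigenfunction for a single $\eta$. For (iii): since the Fredholm-index-zero structure is global in $\eta$ and $F$ has no bifurcation points other than $\eta_0$ and no secondary return to the semi-trivial branches, the global alternative of \cite{ShiWang} leaves only the possibility that $\mathfrak{C}$ is unbounded in $\R^+\times\Wq\times\Wq$. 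The main obstacle I anticipate is (i): verifying that the continuum does not leave the positive cone through a ``corner'' and that it cannot loop back to $\mathfrak{B}_1$ or $\mathfrak{B}_2$ at a point other than $(\eta_0,0,v_\xi)$ — in the absence of the comparison principle of \cite[Lem.3.2]{WalkerJRAM} (unavailable because of cross-diffusion, as the authors note), this has to be done by hand using the positivity of the nonlocal parabolic resolvent and the structure of the nonlinearities, and it is presumably also the reason the theorem cannot pin down which alternative holds in the companion results Theorem~\ref{T22} and Theorem~\ref{T222}.
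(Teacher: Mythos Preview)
Your overall strategy coincides with the paper's: recast the problem as $F(\eta,u,w)=0$ with $v=v_\xi+w$, establish that the partial derivative is Fredholm of index zero, identify a simple kernel at a unique $\eta_0$, check transversality, and invoke the unilateral global theorem of Shi--Wang. Two points, however, are genuinely wrong and would cause the argument to fail as written.

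First, the cross-diffusion contribution $-\gamma\Delta_x(v_\xi u)$ is \emph{not} lower order relative to $\partial_a-\Delta_x$: expanding gives $-\gamma v_\xi\Delta_x u$ plus first- and zeroth-order terms, and $\phi\mapsto (\partial_a-\Delta_D,\gamma_0)^{-1}\big(\gamma\Delta_D(v_\xi\phi),0\big)$ is bounded on $\Wq$ but not compact. The Fredholm property therefore cannot be obtained as ``identity minus compact'' from the heat operator. The paper instead absorbs the cross-diffusion into the principal part, observing that $A_\xi(a)\phi:=-\Delta_D\big((1+\gamma v_\xi(a))\phi\big)+\alpha_2 v_\xi(a)\phi$ itself has maximal $L_q$-regularity (divergence form, strictly positive coefficient), so that $(\partial_a+A_\xi,\gamma_0)$ is an isomorphism; only the nonlocal initial map $(\phi,\psi)\mapsto(\eta\Phi,\xi\Psi)$ is compact, and the Fredholm property is then derived by hand (their Proposition~3.5). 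Second, your description of the kernel is incorrect: the linearization at $(0,0)$ is indeed triangular, but the $w$-equation is \emph{inhomogeneous} in $u$ through the term $\beta_2 v_\xi u$, so a kernel element $(\phi_*,\psi_*)$ has $\psi_*\not\equiv 0$ determined by $\phi_*$ (the paper gives the explicit formula). The kernel is still one-dimensional and $\phi_*>0$, so the bifurcation analysis survives, but your stated direction $(u_0,0)$ is wrong.

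Two further gaps: Shi--Wang requires that closed bounded sets of zeros of $F$ be compact, which the paper obtains by a bootstrap to H\"older regularity (their Lemma~3.2/Corollary~3.3); you do not address this. And the global alternative in \cite{ShiWang} includes a fourth option---the continuum contains a point $(\eta,u,w)$ with $(u,w)\neq(0,0)$ lying in $\mathrm{rg}\,F_{(u,w)}(\eta_0,0,0)$---which must be excluded separately. The paper rules this out via a Krein--Rutman positivity obstruction (their Lemma~3.9): roughly, such a point would force the equation $(1-\eta_0 G_\xi)p(0)=\eta_0 U$ to have a positive solution, contradicting that $1$ is the spectral radius of $\eta_0 G_\xi$ with a positive eigenvector. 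Your sketch of step (iii) skips this entirely.
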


\noindent The value of $\eta_0(\xi)$ corresponding to the bifurcation point is determined in \eqref{eta0}. 

We turn to the case $\xi<1$, which is more involved. Recall that $\mathfrak{B}_1$ is the only semi-trivial branch of solutions in this case.

\begin{thm}\label{T22}
Suppose \eqref{10}-\eqref{6n}. There is $\delta\in [0,1)$ with the property that, given $\xi\in (\delta,1)$, there exists \mbox{$\eta_1:=\eta_1(\xi)>1$} such that a continuum $\mathfrak{S}$ emanates from $(\eta_1,u_{\eta_1},0)\in\mathfrak{B}_1$, and
$$
\mathfrak{S}\setminus\{(\eta_1,u_{\eta_1},0)\}\subset \R^+\times(\Wq^+\setminus\{0\})\times (\Wq^+\setminus\{0\})
$$
consists of coexistence states $(\eta,u,v)$ of \eqref{7}-\eqref{6a}. The continuum $\mathfrak{S}$
\begin{itemize}
\item[(i)] is unbounded, or
\item[(ii)] connects $(\eta_1,u_{\eta_1},0)$ to a solution $(\eta_*,u,v)$ of \eqref{7}-\eqref{6a} with $\eta_*\in (0,1)$ and $u,v\in\Wq^+\setminus\{0\}$.
\end{itemize}
Near the bifurcation point $(\eta_1,u_{\eta_1},0)$, $\mathfrak{S}$ is a continuous curve. There is no other bifurcation point on $\mathfrak{B}_1$ to positive coexistence states.
\end{thm}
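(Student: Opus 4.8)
The plan is to apply the unilateral global bifurcation theorem for Fredholm operators of Shi \& Wang \cite{ShiWang} at the semi-trivial branch $\mathfrak{B}_1$. First I would reformulate \eqref{7}-\eqref{6a} as an abstract equation $F(\eta,u,v)=0$ with $F$ a $C^\infty$ map between suitable Banach spaces built on $\Wq$, using maximal $L_q$-regularity for the operators $\partial_a-\Delta_x$ (with the nonlocal initial conditions \eqref{3a}-\eqref{4a}) to invert the linear part; the point $q>n+2$ guarantees the embedding $\Wq\hookrightarrow C(J\times\bar\Om)$ so that the quadratic nonlinearities and the cross-diffusion term $\Delta_x(\gamma v u)$ define smooth maps. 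The linearization of $F$ along $\mathfrak{B}_1$ should be a Fredholm operator of index zero — this is where the maximal regularity enters decisively, since the cross-diffusion term destroys the compactness one would otherwise exploit, and Fredholmness must be checked by hand from the structure of the decoupled linearized operators and the compactness of the age-integral (trace-type) operators $u\mapsto U$.

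Next I would locate the bifurcation value $\eta_1(\xi)$. Linearizing at $(\eta,u_\eta,0)$ and setting the perturbation $(\phi,\psi)$, the $\psi$-equation decouples into a linear age-structured eigenvalue problem of the form $\partial_a\psi-\Delta_x\psi=(\beta_2 u_\eta-2\beta_1\cdot 0)\psi=\beta_2 u_\eta\psi$ with nonlocal initial condition $\psi(0,\cdot)=\xi\Psi$; the condition for a nontrivial $\psi$ is that the associated net-reproduction/eigenvalue map hits $1$, and monotonicity of $u_\eta$ in $\eta$ (Theorem~\ref{A1}) together with the limits $\|u_\eta\|_{\Wq}\to 0,\infty$ gives a unique crossing value $\eta_1=\eta_1(\xi)>1$, provided $\xi$ exceeds a threshold $\delta\in[0,1)$ (for $\xi$ too small the eigenvalue never reaches $1$, which is precisely why $\delta$ appears and why it may be positive). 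At that value the kernel is one-dimensional and spanned by a positive function, and the transversality (crossing-number) condition of \cite{ShiWang} is verified by differentiating the eigenvalue branch in $\eta$, again using the strict positivity of $\partial_\eta u_\eta$ from Theorem~\ref{A1}. A separate argument, using that on $\mathfrak{B}_1$ the $v$-component vanishes identically and on $\mathfrak{B}_2$ (absent here since $\xi<1$) there is nothing to check, shows there is no other bifurcation point on $\mathfrak{B}_1$ to coexistence states.

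Then the Shi--Wang theorem yields a continuum $\mathfrak{S}$ bifurcating from $(\eta_1,u_{\eta_1},0)$ which is locally a $C^1$-curve and globally satisfies the Rabinowitz--L\'opez-G\'omez alternative: either it is unbounded, or it meets $\mathfrak{B}_1$ again at another point, or it leaves every neighbourhood of the branch. To turn this into the stated dichotomy I would run a positivity-and-exclusion argument: a standard bootstrap from the structure of \eqref{1a}-\eqref{2a} plus the strong maximum principle shows that along the local branch both $u$ and $v$ are strictly positive, so $\mathfrak{S}$ enters the cone $(\Wq^+\setminus\{0\})\times(\Wq^+\setminus\{0\})$; since positivity is an open-and-closed condition relative to nonvanishing components (a solution with $u$ or $v$ touching zero is forced by the maximum principle back onto a semi-trivial branch), $\mathfrak{S}\setminus\{(\eta_1,u_{\eta_1},0)\}$ stays in the coexistence region and can only return to $\mathfrak{B}_1$ at $\eta_1$ itself — excluded — leaving: unbounded, or convergence to a coexistence solution at some parameter $\eta_*$. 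Finally, recalling that for $\eta>1$ the semi-trivial state $u_\eta$ exists and for $\eta\le 1$ equation \eqref{Aee1} has no positive solution (Theorem~\ref{A1}), and using an a-priori comparison $u\le u_\eta$-type bound together with the absence of positive solutions for the $u$-equation when $\eta\le 1$, one pins the limiting parameter in alternative (ii) to $\eta_*\in(0,1)$.

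The main obstacle I anticipate is verifying that the linearized operator is Fredholm of index zero despite the cross-diffusion term: unlike the elliptic case discussed in the introduction, the change of variables $z=(1+\gamma v)u$ does not produce a compact solution operator, so one cannot simply cite a Leray--Schauder framework, and the Fredholm property has to be extracted from maximal $L_q$-regularity together with a careful treatment of the nonlocal initial condition as a compact perturbation. A secondary difficulty is the global exclusion of the "returns to $\mathfrak{B}_1$" alternative and the sign analysis needed to confine $\eta_*$ to $(0,1)$, which is delicate precisely because — as the authors note — no comparison principle in the spirit of \cite[Lem.3.2]{WalkerJRAM} is available for the full cross-diffusion system.
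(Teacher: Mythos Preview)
Your outline is correct up through the Fredholm verification, the determination of $\eta_1(\xi)$ from the decoupled $\psi$-equation, and the transversality check via $\partial_\eta u_\eta>0$; these match the paper's approach closely. The genuine gap is in how you arrive at alternative~(ii).

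The paper does \emph{not} apply Shi--Wang directly on the interval $(1,\infty)$ where the positive branch $u_\eta$ lives. Instead it first extends the map $\eta\mapsto u_\eta$ smoothly through $(1,0)$ to an interval $(\eta_*,\infty)$ with some fixed $\eta_*\in(0,1)$, where $-u_\eta\in\Wqd$ for $\eta\in(\eta_*,1)$ (this extension comes from the local bifurcation structure of \eqref{Aee1} at $\eta=1$). Only then does the substitution $u=u_\eta-w$ give a trivial branch $\mathcal{F}(\eta,0,0)=0$ on all of $(\eta_*,\infty)$, so that the Shi--Wang alternatives apply on that larger interval. Alternative~(ii) of the theorem is then nothing but the Shi--Wang ``intersects the boundary of the parameter domain'' alternative: the continuum reaches $\eta=\eta_*$, and a case analysis (ruling out $u\equiv0$, $v\equiv0$ via spectral-radius identities as in Lemma~\ref{A2} and \eqref{1mio}) shows that at that boundary point both components are still in $\Wqd$. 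The value $\eta_*$ is thus an \emph{input} to the construction, not a consequence of any estimate.

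Your proposed mechanism for~(ii) --- an ``a-priori comparison $u\le u_\eta$-type bound'' pinning $\eta_*\in(0,1)$ --- does not work: the paper states explicitly that no such comparison principle is available for the cross-diffusion system, and even if one were, it would not manufacture a specific limiting parameter value. Without the branch-extension device you have no way to produce alternative~(ii) from the abstract Shi--Wang alternatives; a bounded continuum that does not return to $\mathfrak{B}_1$ has, in your setup, nowhere to go.
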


The values of $\delta$ and $\eta_1(\xi)$ are given in \eqref{delta} and \eqref{eta1}, respectively. Note that alternative (i) above always occurs if cross-diffusion is not taken into account, i.e. if $\gamma=0$, see \cite[Thm.1.5]{WalkerJFA}. Alternative (ii) stems from a technical condition, and we conjecture that alternative (i) is the generic case also for the present situation where $\gamma>0$. However, as occurrence of alternative (ii) is not ruled out by our analysis, Theorem~\ref{T22} is rather a local bifurcation result in this regard.

\subsection{Bifurcation with Respect to the Parameter $\xi$}

Since the cross-diffusion term involves the predator density $v$, bifurcation from semi-trivial solution branches is more intricate when regarding $\xi$ (the measure of the predator fertility) as parameter and keeping $\eta$ fixed. We shall only consider the situation $\eta>1$, the reason being explained in Remark~\ref{R1}. We now write $(\xi,u,v)$ for solutions to \eqref{7}-\eqref{6a}. Then, there are two semi-trivial branches of solutions
$$
\mathfrak{T}_1:=\{(\xi,0,v_\xi)\,;\, \xi> 1\}\ ,\qquad
\mathfrak{T}_2:=\{(\xi,u_\eta,0)\,;\, \xi\ge 0\}\ .
$$
Also in this case, bifurcation from the semi-trivial branch $\mathfrak{T}_2$ occurs:

\begin{thm}\label{T222}
Suppose \eqref{10}-\eqref{6n} and let $\eta>1$. There exists $\xi_0:=\xi_0(\eta)\in (0,1)$ such that a continuum $\mathfrak{R}$ of solutions to \eqref{7}-\eqref{6a} emanates from $(\xi_0,u_{\eta},0)\in\mathfrak{T}_2$ satisfying the alternatives
\begin{itemize}
\item[(i)] $\mathfrak{R}\setminus\{(\xi_0,u_{\eta},0)\}$ is unbounded in $\R^+\times (\Wq^+\setminus\{0\})\times (\Wq^+\setminus\{0\})$, or
\item[(ii)] $\mathfrak{R}$ connects $\mathfrak{T}_2$ with $\mathfrak{T}_1$.
\end{itemize}
Except for the bifurcation point(s), $\mathfrak{R}$ consists exclusively of coexistence states. 
Near the bifurcation point $(\xi_0,u_{\eta},0)$, $\mathfrak{R}$ is a continuous curve. There is no other bifurcation point on $\mathfrak{T}_2$ or on $\mathfrak{T}_1$ to positive coexistence states.
\end{thm}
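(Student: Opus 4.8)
\textbf{Proof strategy for Theorem~\ref{T222}.} The plan is to recast \eqref{7}--\eqref{6a}, with $\eta>1$ fixed and $\xi$ as parameter, as an abstract equation $\mathcal{F}(\xi,u,v)=0$ for a $C^2$ (indeed smooth) map $\mathcal{F}$ on $\R\times\Wq\times\Wq$ with values in a suitable Banach space, the nonlocal initial conditions \eqref{3a}--\eqref{4a} and the Dirichlet conditions \eqref{5a}--\eqref{6a} being incorporated into $\mathcal{F}$ as in \cite{WalkerJRAM}. Maximal $L_q$--regularity for $\partial_a-\Delta_x$ (as set up in the preceding sections) should show that $D_{(u,v)}\mathcal{F}(\xi,u,v)$ is a Fredholm operator of index zero for every admissible triple, notwithstanding the genuinely second order cross--diffusion term $-\Delta_x(\gamma vu)$, so that the unilateral global bifurcation theory of Shi \& Wang \cite{ShiWang} becomes applicable. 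By Theorem~\ref{A1} the line $\mathfrak{T}_2=\{(\xi,u_\eta,0)\,;\,\xi\ge0\}$ consists of solutions, and we look for bifurcation of solutions with $v\not\equiv0$ off $\mathfrak{T}_2$. Setting $u=u_\eta+\phi$, $v=\psi$, the decisive structural point is that equation \eqref{2a} linearized along $\mathfrak{T}_2$ decouples from $\phi$ (its nonlinear coupling and the cross--diffusion term being quadratic in $v$ there): the $\psi$--component of $\kk D_{(u,v)}\mathcal{F}(\xi,u_\eta,0)$ solves the linear nonlocal problem
\[
\partial_a\psi-\Delta_x\psi-\beta_2u_\eta\psi=0\ \ \text{in}\ (0,a_m)\times\Om\,,\qquad
\psi(0,\cdot)=\xi\int_0^{a_m}b(a)\,\psi(a,\cdot)\,\rd a\,,\qquad
\psi=0\ \ \text{on}\ (0,a_m)\times\partial\Om\,,
\]
while $\xi$ only enters the linearization through the nonlocal initial trace of $\psi$.

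\textbf{The bifurcation value $\xi_0$.} Let $\Pi_\eta(a)$ denote the parabolic evolution operator generated by $\Delta_x+\beta_2u_\eta(a,\cdot)$ under Dirichlet conditions, and put $Q_\eta:=\int_0^{a_m}b(a)\,\Pi_\eta(a)\,\rd a$, a compact and (by the parabolic strong maximum principle together with $b>0$ near $a_m$) strongly positive operator on $C_0(\Om)$. The displayed problem has a nontrivial --- then necessarily positive --- solution precisely when $1/\xi$ equals the principal eigenvalue $r(Q_\eta)$, which by the Krein--Rutman theorem is positive, algebraically simple, and the only eigenvalue of $Q_\eta$ admitting a positive eigenfunction. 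Set $\xi_0:=\xi_0(\eta):=1/r(Q_\eta)>0$. Since $u_\eta>0$ for $\eta>1$ by Theorem~\ref{A1}, $\Pi_\eta(a)$ dominates the Dirichlet heat semigroup $e^{a\Delta_x}$, so $Q_\eta\ge Q_0:=\int_0^{a_m}b(a)\,e^{a\Delta_x}\,\rd a$ with $Q_\eta\neq Q_0$; applying $Q_\eta$ to the Dirichlet principal eigenfunction $\varphi_1$ and using $e^{a\Delta_x}\varphi_1=e^{-\lambda_1a}\varphi_1$ and the normalization \eqref{6n} (so that $Q_0\varphi_1=\varphi_1$) gives $Q_\eta\varphi_1\ge\varphi_1$ with $Q_\eta\varphi_1\neq\varphi_1$, whence $r(Q_\eta)>1$ and thus $\xi_0\in(0,1)$; this determines the value claimed in the statement. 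A kernel vector is $(\phi_0,\psi_0)$ with $\psi_0>0$ the eigenfunction and $\phi_0$ the unique solution of the inhomogeneous linear problem arising from \eqref{1a} --- unique because the operator built analogously from $\Delta_x-2\alpha_1u_\eta$ has principal eigenvalue strictly below $1$ (again by domination and \eqref{6n}), so its homogeneous nonlocal version has only the trivial solution. Hence $\dimens\,\kk D_{(u,v)}\mathcal{F}(\xi_0,u_\eta,0)=1$.

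\textbf{Transversality, local and global continuation.} By the Fredholm index--zero property, $\codim\,\im D_{(u,v)}\mathcal{F}(\xi_0,u_\eta,0)=1$; the Crandall--Rabinowitz transversality condition $D_\xi D_{(u,v)}\mathcal{F}(\xi_0,u_\eta,0)[(\phi_0,\psi_0)]\notin\im D_{(u,v)}\mathcal{F}(\xi_0,u_\eta,0)$ is checked by testing against the positive eigenfunction of the adjoint $Q_\eta^{\,*}$ and invoking $\psi_0>0$, $b\ge0$, exactly as in \cite{WalkerJRAM,WalkerJFA}. The Crandall--Rabinowitz theorem for Fredholm operators then produces a $C^1$ curve of solutions through $(\xi_0,u_\eta,0)$; since $\psi_0>0$, a perturbation argument for $u=u_\eta+\phi$ together with the parabolic strong maximum principle shows the nontrivial solutions on it to be coexistence states. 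Feeding this local curve into the unilateral global bifurcation theorem of \cite{ShiWang} (resting on \cite{LopezGomezChapman} and \cite{PejsachowiczRabier}) yields the component $\mathfrak{R}$ of coexistence solutions issuing from $(\xi_0,u_\eta,0)$; it is either unbounded or returns to the set of semi--trivial states. Any semi--trivial state on $\overline{\mathfrak{R}}$ lies, by the strong maximum principle, on $\mathfrak{T}_1\cup\mathfrak{T}_2$, and a nontrivial kernel along $\mathfrak{T}_2$ (resp.\ $\mathfrak{T}_1$) exists only at the single parameter value selected by Krein--Rutman for the associated positive eigenfunction --- namely $\xi_0$ on $\mathfrak{T}_2$, and at most one point on $\mathfrak{T}_1$. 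Consequently $\mathfrak{R}$ cannot come back to $\mathfrak{T}_2$, and if bounded it must connect to $\mathfrak{T}_1$ at that point, which is alternatives (i)--(ii), and this also gives the claimed uniqueness of the bifurcation point on each branch.

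\textbf{Expected main obstacle.} The principal difficulties I anticipate are, first, securing the Fredholm--of--index--zero property of $D_{(u,v)}\mathcal{F}$ in the presence of the quasilinear--looking term $-\Delta_x(\gamma vu)$ --- precisely where the maximal--regularity framework of the earlier sections has to be deployed with care, in contrast to the elliptic case where the substitution $z=(1+\gamma v)u$ restores compactness --- and, second, the absence of any comparison principle (as stressed in the Introduction), which is what prevents excluding alternative (ii) and keeps Theorem~\ref{T222} only semi--global in this respect; the positivity bookkeeping needed to keep both components in the open positive cone along $\mathfrak{R}$, losing positivity only upon reaching $\mathfrak{T}_1$ or $\mathfrak{T}_2$, likewise relies on the parabolic strong maximum principle for the nonlocal problem.
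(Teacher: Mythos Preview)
Your strategy matches the paper's: linearize along $\mathfrak{T}_2$, locate $\xi_0$ via Krein--Rutman for the decoupled $v$-problem, check transversality, and invoke the Shi--Wang unilateral theorem, then analyze how $\mathfrak{R}$ can leave the positive cone. A few points need tightening, however.

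First, your invertibility argument for the $\phi$-component is slightly off: domination and \eqref{6n} give only $r(H_{[2\alpha_1 u_\eta]})<1$, but the nonlocal condition for $\phi$ carries the factor $\eta>1$, so what you actually need is $\eta\, r(H_{[2\alpha_1 u_\eta]})<1$; this follows from \eqref{sp} together with Lemma~\ref{A2} (compare $2\alpha_1 u_\eta$ to $\alpha_1 u_\eta$, not to $0$). Second --- and this is the substantive issue --- your claim that Krein--Rutman forces ``at most one point on $\mathfrak{T}_1$'' is not justified: the linearization along $\mathfrak{T}_1$ involves the cross-diffusion operator $A_\xi\phi=-\Delta_D\big((1+\gamma v_\xi)\phi\big)+\alpha_2 v_\xi\phi$, which admits no comparison principle, so no monotonicity of $r(G_\xi)$ in $\xi$ is available. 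This is precisely the content of Remark~\ref{R1}; the paper does not argue monotonicity here but only shows that any bifurcation point on $\mathfrak{T}_1$ must satisfy $\eta\, r(G_\xi)=1$ (relation \eqref{xi1}). Third, the Shi--Wang alternatives include a fourth possibility (the continuum contains a point whose deviation from the trivial branch lies in the range of the linearization), which the paper eliminates by a separate positivity argument in the style of Lemma~\ref{C}; your sketch does not address it. Finally, the boundary analysis --- in particular ruling out the case $u\equiv v\equiv 0$ and identifying the limit on $\mathfrak{T}_1$ or $\mathfrak{T}_2$ --- is not a direct consequence of the strong maximum principle but requires the normalization $\phi_j=u_j/\|u_j\|_{\Wq}$, $\psi_j=v_j/\|v_j\|_{\Wq}$ together with the regularity/compactness provided by Lemma~\ref{A44}.
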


The values of $\xi_0(\eta)$ and $\xi_1=\xi_1(\eta)>1$, corresponding to the connection point $(\xi_1,0,v_{\xi_1})\in\mathfrak{T}_1$ if alternative (ii) occurs, are determined in \eqref{xi0} and \eqref{xi1}, respectively. Given the assumptions of this theorem, if $\gamma=0$, one can show that alternative (ii) occurs if $\eta$ is less than a certain value or if additional assumptions on the birth rates and on $\alpha_j$, $\beta_j$ are imposed, see \cite[Thm.2.2]{WalkerJRAM}.\\

The outline of the paper is as follows: The next section is dedicated to the proof of Theorem~\ref{T1}. In Subsection~\ref{sec 2.1}, we first introduce some notation, and in Subsection~\ref{sec 2.2}, we provide the necessary auxiliary results needed to perform the actual proof of Theorem~\ref{T1} in Subsection~\ref{sec 2.3}. Sections \ref{sec 3} and \ref{sec 4} are dedicated to the proofs of Theorem~\ref{T22} and Theorem~\ref{T222}, respectively. As these proofs are along the lines of the proof of Theorem~\ref{T1}, these sections are kept rather short.

\section{Proof of Theorem~\ref{T1}}\label{sec 2}

\subsection{Notations and Preliminaries}\label{sec 2.1}

Given two Banach spaces $F_1$ and $F_0$, we shall use the notation $\mathcal{L}(F_1,F_0)$ for the set of bounded linear operators and $\mathcal{K}(F_1,F_0)$ for the set of compact linear operators between $F_1$ and $F_0$. We set $\mathcal{L}(F_0):=\mathcal{L}(F_0,F_0)$ and similarly \mbox{$\mathcal{K}(F_0):=\mathcal{K}(F_0,F_0)$}. The set of toplinear isomorphisms between $F_1$ and $F_0$ is denoted by $\mathcal{L}is(F_1,F_0)$.

Let $-\Delta_D$ denote the negative Laplacian on $L_q:=L_q(\Om)$ subject to Dirichlet boundary conditions, that is, $-\Delta_D:=-\Delta_x$ with domain $\Wqb^2$, where
$$
 \Wqb^\kappa:=\Wqb^\kappa (\Om):=\{u\in W_q^\kappa\,;\, u=0\ \text{on}\ \partial\Om\}
 $$
for $\kappa>1/q$ and $\Wqb^\kappa:=W_q^\kappa (\Om)$ for $0\le\kappa<1/q$. As $q>n+2$, we have $\Wqb^{2-2/q}\hookrightarrow C^1(\bar{\Om})$ by the Sobolev embedding theorem. In particular, $\mathrm{int}(\Wqqp)\not=\emptyset$, that is, the interior of the positive cone of $\Wqb^{2-2/q}$ is not empty.
Let $\gamma_0$ denote the trace operator defined by $\gamma_0 u:=u(0)$ for $u\in\Wq$, which is well-defined owing to the embedding  \cite[III.Thm.4.10.2]{LQPP}
$$
\Wq\hookrightarrow C\big([0,a_m],\Wqb^{2-2/q}\big)\ .
$$
In fact, we have, due to the interpolation inequality
\cite[I.Thm.2.11.1]{LQPP}, 
\bqn\label{emb}
\Wq\hookrightarrow C^{1-1/q-\vartheta}([0,a_m],\Wqb^{2\vartheta})\ ,\quad 0\le \vartheta\le 1-1/q\ .
\eqn
In the following, we let $\Wqd:=\Wq^+\setminus\{0\}$.
Recall that an operator $A\in \mathcal{L}(\Wqb^2,L_q)$ is said to have {\it maximal $L_q$-regularity} provided 
$$
(\partial_a+A,\gamma_0)\in \mathcal{L}is(\Wq,\Lq\times \Wqb^{2-2/q})\ ,
$$
where $\Lq:=L_q(J,L_q)$. Recall that $-\Delta_D$ has maximal $L_q$-regularity. If $A: J\rightarrow \mathcal{L}(\Wqb^\kappa,L_q)$ for some $\kappa\ge 0$, we write $(Au)(a):=A(a) u(a)$ for $a\in J$ and \mbox{$u:J\rightarrow \Wqb^\kappa$}. The following perturbation result is proved in \cite[Cor.3.4]{PruessMonopoli}, which is also valid for $\R^k$-valued functions, i.e., if $L_q$ is replaced by $L_q(\Om,\R^k)$ etc.:

\begin{lem}\label{Jan}
Let $A\in C(J,\mathcal{L}(\Wqb^2,L_q))$ be such that $A(a)$ has maximal $L_q$-regularity for each $a\in J$ and let $B\in L_q(J,\mathcal{L}(\Wqb^{2-2/q},L_q))$. Then $(\partial_a+A+B,\gamma_0)\in\mathcal{L}is(\Wq,\Lq\times\Wqb^{2-2/q})$.
\end{lem}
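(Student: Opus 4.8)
The plan is to derive Lemma~\ref{Jan} by a standard perturbation argument for maximal regularity, combining the fact that $(\partial_a + A, \gamma_0)$ is already an isomorphism (which is the content of \cite[Cor.3.4]{PruessMonopoli}, or more classically a consequence of the non-autonomous maximal regularity theory) with the observation that the lower-order term $B$ maps $\Wq$ compactly into $\Lq$, so that adding it does not destroy the isomorphism property but only possibly its injectivity — and injectivity can then be recovered by a continuity/homotopy argument. Concretely, first I would record that the operator $T := (\partial_a + A, \gamma_0) \in \mathcal{L}\mathrm{is}(\Wq, \Lq \times \Wqb^{2-2/q})$, so $T^{-1}$ is bounded.

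Next I would analyze the perturbation $\tilde B u := (Bu, 0)$, viewed as a map $\Wq \to \Lq \times \Wqb^{2-2/q}$. Because $B \in L_q(J, \mathcal{L}(\Wqb^{2-2/q}, L_q))$ and, by the embedding \eqref{emb} with a suitable $\vartheta < 1 - 1/q$, $\Wq$ embeds compactly into $C([0,a_m], \Wqb^{2-2/q})$ (the target exponent $2 - 2/q$ being strictly below the regularity $2\vartheta$ one can afford — more precisely one uses $\Wq \hookrightarrow C^{\sigma}(J, \Wqb^{2\vartheta})$ with $2\vartheta > 2 - 2/q$, which compactly embeds into $C(J, \Wqb^{2-2/q})$ since $\Om$ is bounded), the composition $u \mapsto Bu$ sends bounded sets of $\Wq$ into sets that are bounded in $\Lq$ and equi-integrable, hence $\tilde B \in \mathcal{K}(\Wq, \Lq \times \Wqb^{2-2/q})$. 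Therefore $T + \tilde B = (\partial_a + A + B, \gamma_0)$ is a compact perturbation of an isomorphism, hence a Fredholm operator of index $0$; equivalently $I + T^{-1}\tilde B$ is a compact perturbation of the identity on $\Wq$, so it is invertible iff it is injective.

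It then remains to verify injectivity, i.e. that the homogeneous problem $\partial_a u + A(a)u + B(a)u = 0$ on $J$ with $u(0) = 0$ has only the trivial solution in $\Wq$. Here I would use the continuation method: replace $B$ by $sB$ for $s \in [0,1]$; for $s = 0$ the operator $(\partial_a + A, \gamma_0)$ is injective by hypothesis, and $s \mapsto (\partial_a + A + sB, \gamma_0)$ is a norm-continuous path of Fredholm-index-$0$ operators in $\mathcal{L}(\Wq, \Lq \times \Wqb^{2-2/q})$. The invertible operators form an open set, and the set of $s$ for which the operator is invertible is therefore open; showing it is also closed — equivalently, that a priori $\|u\|_{\Wq} \le C \|(\partial_a + A + sB)u\|_{\Lq}$ uniformly in $s$ whenever $u(0)=0$ — gives invertibility for all $s \in [0,1]$, in particular $s=1$. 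The a priori estimate follows from $\|u\|_{\Wq} = \|T^{-1}(f - sBu, 0)\|_{\Wq} \le \|T^{-1}\|(\|f\|_{\Lq} + \|B\|_{L_q(J,\mathcal{L})}\,\|u\|_{C(J,\Wqb^{2-2/q})})$ together with an interpolation estimate $\|u\|_{C(J,\Wqb^{2-2/q})} \le \ve \|u\|_{\Wq} + C_\ve \|u\|_{\Lq}$ and a Gronwall/integrating argument in the age variable $a$ to absorb the low-order $\Lq$-norm — this last absorption step, making the constant genuinely uniform in $s$, is the only place requiring care and is the main (though routine) obstacle.

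Alternatively, and perhaps cleanest for a short write-up, I would simply invoke \cite[Cor.3.4]{PruessMonopoli} directly: that corollary is precisely a maximal-regularity perturbation statement allowing lower-order perturbations $B \in L_q(J, \mathcal{L}(\Wqb^{2-2/q}, L_q))$ of an operator $A$ with pointwise maximal regularity, and the $\R^k$-valued version follows verbatim since all the function spaces and the Laplacian act componentwise. In that case the proof reduces to checking that the hypotheses of \cite[Cor.3.4]{PruessMonopoli} are met — $A \in C(J, \mathcal{L}(\Wqb^2, L_q))$ with $A(a)$ of maximal $L_q$-regularity for every $a$, and $B$ of the stated lower-order integrability class — which is immediate from the assumptions of the lemma.
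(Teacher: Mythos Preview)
The paper gives no proof: it states the lemma and records that the result ``is proved in \cite[Cor.3.4]{PruessMonopoli}'', adding only that the $\R^k$-valued version follows componentwise. Your final alternative is therefore exactly the paper's treatment, and nothing more is required.

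A remark on your first, more elaborate route: the compactness argument contains a slip. You cannot take $\vartheta < 1 - 1/q$ in \eqref{emb} and simultaneously have $2\vartheta > 2 - 2/q$; the inequality goes the wrong way, so \eqref{emb} does not yield a compact embedding $\Wq \hookrightarrow C(J,\Wqb^{2-2/q})$ in the manner you describe. Compactness of $\tilde B$ is in fact neither obvious nor needed here --- the standard argument (and the one underlying \cite[Cor.3.4]{PruessMonopoli}) instead exploits that $\|B\|_{L_q((0,\delta),\mathcal{L}(\Wqb^{2-2/q},L_q))} \to 0$ as $\delta \to 0^+$, so that $B$ is a contraction-size perturbation on short subintervals, after which one concatenates. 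Your Gronwall/absorption sketch gestures in this direction but does not carry it out; since the lemma is a direct citation, this gap is harmless for the write-up.
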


Given $\varrho>0$ and $h\in C^\varrho (J,C(\bar{\Om}))$, we let
$\Pi_{[h]}(a,\sigma)$, $0\le \sigma\le a\le a_m$,
denote the unique parabolic evolution operator corresponding to  
$-\Delta_D + h\in  C^\varrho \big(J,\ml(\Wqb^2,L_q)\big)$,
that is, $z(a)=\Pi_{[h]}(a,\sigma)\Phi$, $a\in (\sigma,a_m)$, defines the unique strong solution to 
$$
\partial_a z-\Delta_D z+hz=0\ ,\quad a\in [\sigma,a_m)\ ,\qquad z(\sigma)=\Phi\ ,
$$
for any given $\sigma\in (0,a_m)$ and $\Phi\in L_q$ (see \cite[II.Cor.4.4.1]{LQPP}). Note that the evolution operator is positive, i.e. 
$$
\Pi_{[h]}(a,\sigma)\Phi \in L_q^+\ ,\quad 0\le \sigma\le a\le a_m\ ,\quad \Phi\in L_q^+\ .
$$ 
Since $-\Delta_D$ has maximal $L_q$-regularity, it follows from Lemma~\ref{Jan} that 
\bqn\label{maxreg}
(\partial_a-\Delta_D +h,\gamma_0)\in\mathcal{L}is(\Wq,\Lq\times\Wqb^{2-2/q})
\eqn 
and, in particular, $\Pi_{[h]}(\cdot,0)\Phi\in\Wq$ for $\Phi\in\Wqb^{2-2/q}$.
We set
$$
H_{[h]}:=\int_0^{a_m}b(a)\,\Pi_{[h]}(a,0)\,\rd a\ .
$$
Then $H_{[h]}\in \mk(\Wqb^{2-2/q})$ owing to standard regularizing effects of the parabolic evolution operator $\Pi_{[h]}$ and the compact embedding $\Wqb^2\hookrightarrow\Wqb^{2-2/q}$. Moreover, as $\Pi_{[h]}(a,0)$ for $a\in (0,a_m)$ is strongly positive on $\Wqb^{2-2/q}$ by \cite[Cor.13.6]{DanersKochMedina}, the same holds true for $H_{[h]}$ due to \eqref{10}, that is,
\bqn\label{ssss}
H_{[h]}\Phi \in \mathrm{int}(\Wqqp)\ ,\quad \Phi\in\Wqqp\setminus\{0\}\ .
\eqn
The corresponding spectral radius $r(H_{[h]})$ can thus be characterized according to the Krein-Rutman theorem \cite[Thm.3.2]{AmannSIAMReview} (see \cite[Lem.3.1]{WalkerJRAM}):

\begin{lem}\label{A2}
For $h\in C^\varrho (J,C(\bar{\Om}))$ with $\varrho>0$, the spectral radius $r(H_{[h]})>0$ is a simple eigenvalue of $H_{[h]}$ with a corresponding eigenfunction belonging to $\mathrm{int}(\Wqqp)$. It is the only eigenvalue of $H_{[h]}$ with a positive eigenfunction. Moreover, if $h$ and $g$ both belong to $C^\varrho (J,C(\bar{\Om}))$ with $g\ge h$ but $g\not\equiv h$, then $r(H_{[g]})<r(H_{[h]})$. 
\end{lem}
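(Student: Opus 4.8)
The statement is an application of the Krein--Rutman theorem to the compact operator $H_{[h]}\in\mk(\Wqb^{2-2/q})$, whose strong positivity \eqref{ssss} has already been established in the text. The plan is to verify the hypotheses of the Krein--Rutman theorem in the form \cite[Thm.3.2]{AmannSIAMReview} and then to deduce the strict monotonicity in $h$ by a comparison of evolution operators.

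First I would record that $\Wqb^{2-2/q}$ is an ordered Banach space whose positive cone $\Wqqp$ is generating with nonempty interior: indeed $q>n+2$ yields the embedding $\Wqb^{2-2/q}\hookrightarrow C^1(\bar\Om)$, so $\mathrm{int}(\Wqqp)\neq\emptyset$ as noted before Lemma~\ref{Jan}. Next, $H_{[h]}$ is compact (established via the regularizing effect of $\Pi_{[h]}$ and the compact embedding $\Wqb^2\hookrightarrow\Wqb^{2-2/q}$) and strongly positive by \eqref{ssss}. The Krein--Rutman theorem for compact strongly positive operators then gives at once that the spectral radius $r:=r(H_{[h]})$ is strictly positive, is an algebraically simple eigenvalue, possesses an eigenfunction $\Phi_h\in\mathrm{int}(\Wqqp)$, and is the only eigenvalue admitting a positive eigenfunction. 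That $r>0$ can be seen by picking any $\Phi\in\Wqqp\setminus\{0\}$ and observing $H_{[h]}\Phi\in\mathrm{int}(\Wqqp)$, whence $H_{[h]}\Phi\ge cr_0\Phi$ for a suitable $c>0$ along the ray through $\Phi$, forcing $r>0$ by the standard Krein--Rutman lower bound; alternatively, $r>0$ because a strongly positive compact operator cannot be quasi-nilpotent.

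For the monotonicity, suppose $g,h\in C^\varrho(J,C(\bar\Om))$ with $g\ge h$, $g\not\equiv h$. The key observation is the comparison estimate for the parabolic evolution operators: since the difference $w:=\Pi_{[h]}(\cdot,0)\Phi-\Pi_{[g]}(\cdot,0)\Phi$ solves $\partial_a w-\Delta_D w+hw=(g-h)\,\Pi_{[g]}(\cdot,0)\Phi$ with $w(0)=0$, and since the right-hand side is nonnegative (and not identically zero, using $g\not\equiv h$ and strong positivity of $\Pi_{[g]}$) while the evolution operator $\Pi_{[h]}$ is positivity preserving, one gets $\Pi_{[h]}(a,0)\Phi\ge\Pi_{[g]}(a,0)\Phi$ for all $a$, with strict inequality in the interior for $a$ near $a_m$. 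Integrating against $b$ and using \eqref{10} gives $H_{[h]}\Phi-H_{[g]}\Phi\in\mathrm{int}(\Wqqp)$ for $\Phi\in\Wqqp\setminus\{0\}$, i.e. $H_{[g]}<H_{[h]}$ in the strong sense. Applying this to the positive eigenfunction $\Phi_g$ of $H_{[g]}$ yields $r(H_{[g]})\,\Phi_g=H_{[g]}\Phi_g< H_{[h]}\Phi_g$, and a standard Krein--Rutman argument (testing against the strictly positive eigenfunctional of the adjoint $H_{[h]}^*$, or invoking the characterization of $r(H_{[h]})$ as a supremum over such inequalities) gives $r(H_{[g]})<r(H_{[h]})$. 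The whole argument is essentially that of \cite[Lem.3.1]{WalkerJRAM}, and the only point requiring a little care is the strict (interior) comparison of the evolution operators, which rests on the strong parabolic maximum principle \cite[Cor.13.6]{DanersKochMedina} together with the fact that $b(a)>0$ for $a$ near $a_m$; this is the step I expect to be the main technical obstacle, though it is by now routine.
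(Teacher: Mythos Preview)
Your proposal is correct and follows exactly the approach the paper indicates: the paper does not give its own proof but simply cites the Krein--Rutman theorem \cite[Thm.3.2]{AmannSIAMReview} together with \cite[Lem.3.1]{WalkerJRAM}, and your argument is precisely a spelled-out version of that reference (compactness and strong positivity of $H_{[h]}$ give the spectral statement, and the variation-of-constants comparison of $\Pi_{[h]}$ and $\Pi_{[g]}$ combined with the strong parabolic maximum principle and \eqref{10} yields the strict monotonicity of the spectral radius).
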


\noindent In particular, the normalization \eqref{6n} implies
\bqn\label{1mio}
r(H_{[0]})=1\ 
\eqn
since any positive eigenfunction of $-\Delta_D$ is an eigenfunction of $H_{[h]}$ as well. Moreover, writing the solution to \eqref{Aee1} in the form $u=\Pi_{[\alpha_1 u]}(\cdot,0)u(0)$, Theorem~\ref{A1} together with Lemma~\ref{A2} imply
\bqn\label{sp}
\eta\, r(H_{[\alpha_1 u_\eta]})=\xi\, r(H_{[\beta_1 v_\xi]})=1\ ,\quad \eta,\xi>1\ ,
\eqn
since $u_\eta(0), v_\xi(0)\in(\Wqqp)\setminus\{0\}$.

\subsection{Auxiliary Results}\label{sec 2.2}

The aim is to apply the global bifurcation results of \cite[Thm.4.3,Thm.4.4]{ShiWang} in order to establish Theorem~\ref{T1}. We first provide the necessary tools.

Let $\xi>1$ be fixed and let $v_\xi\in\Wqd$ denote the solution to \eqref{7}-\eqref{6a} with $u\equiv 0$ provided by Theorem~\ref{A1}. Throughout we use the convention \eqref{7}. Notice that for each $a \in J$, the operator $A_1(a)$, given by
$$
A_1(a)u:=-\divv_x((1+\gamma v_\xi(a))\nabla_x u)\ ,\quad u\in \Wqb^2\ \ ,
$$
has maximal $L_q$-regularity due to its divergence form, the positivity of $v_\xi\in\Wq$, \eqref{emb}, and e.g. \cite[I.Cor.1.3.2, III.Ex.4.7.3, III.Thm.4.10.7]{LQPP}. Moreover, $A_1\in C(J,\mathcal{L}(\Wqb^2,L_q))$ by \eqref{emb}. Noticing also that
$$
B_1(a)u:=-\gamma\,\divv_x\big(u\nabla_x v_\xi(a)\big)+\alpha_2 v_\xi(a) u\ ,\quad a\in J\ ,\quad u\in \Wqb^{2-2/q}\ ,
$$
defines an operator $B_1\in L_q(J,\mathcal{L}(\Wqb^{2-2/q},L_q))$,
it follows from Lemma~\ref{Jan} that 
\bqn\label{T11}
T_1:=(\partial_a+A_1+B_1,\gamma_0)^{-1}\in\mathcal{L}(\Lq\times \Wqb^{2-2/q},\Wq)
\eqn
is well-defined. Observe that
$$
A_\xi u:=(A_1+B_1)u=-\Delta_D \big((1+\gamma v_\xi)u\big)+ \alpha_2 v_\xi u\ ,\quad u\in \Wqb^2\ .
$$
From \eqref{emb} and  \eqref{maxreg} we obtain
$$
T_2:=(\partial_a-\Delta_D+2\beta_1 v_\xi,\gamma_0)^{-1}\in\mathcal{L}(\Lq\times \Wqb^{2-2/q},\Wq)\ .
$$
To derive a bifurcation from a point $(\eta_0, 0,v_\xi)\in\mathfrak{B}_2$ for a suitable $\eta_0=\eta_0(\xi)$, we write $(\eta,u,v)=(\eta,u,v_\xi+w)$ for a solution to problem \eqref{7}-\eqref{6a}, which is then equivalent to
\begin{align}
\partial_a u-\Delta_D\big((1+\gamma v_\xi)u\big) &=\Delta_D\big(\gamma w u\big)-\alpha_1u^2-\alpha_2 u(v_\xi +w)\ ,  &a\in (0,a_m)\ ,\quad x\in\Om\ ,\label{1ab}\\
\partial_a w-\Delta_D w&=-\beta_1 w^2-2\beta_1v_\xi w+\beta_2 (v_\xi +w)u\ ,\,&  a\in (0,a_m)\ ,\quad x\in\Om\ ,\label{2ab}
\end{align}
subject to 
\begin{align}
u(0,x)&=\eta U(x)\ , \quad   x\in\Om\ ,\label{3ab}\\
w(0,x)&=\xi W(x)\ ,  \quad x\in\Om\ .\label{4ab}
\end{align}
The solutions $(\eta,u,w)$ to \eqref{1ab}-\eqref{4ab}, in turn, are the zeros of the map $F:\R\times\Wq\times\Wqh\rightarrow \Wq\times\Wq$, defined by
\bqn\label{F}
F(\eta,u,w):=\left(\begin{matrix} u-T_1\big(\Delta_D(\gamma wu)-\alpha_1u^2- \alpha_2 uw\, , \,\eta U\big)\\ w-T_2\big(-\beta_1 w^2+\beta_2 (v_\xi+w)u\, ,\, \xi W\big)\end{matrix}\right)\ ,
\eqn
where
$$
\Wqh:=\{w\in\Wq\,;\, w(a,x)>-1/2\gamma\,,\, a\in J\,,\, x\in\bar{\Om}\}
$$
is an open subset of $\Wq$ owing to \eqref{emb}.
Clearly, $F$ is smooth and $F(\eta,0,0)=0$ for $\eta\in\R$. As for the Frech\'et derivatives at $(\eta,u,w)$ we compute
\bqn\label{Z1}
F_{(u,w)}(\eta,u,w)[\phi,\psi]=\left(\begin{matrix} \phi-T_1\big( \Delta_D(\gamma \psi u) +\Delta_D(\gamma w\phi)-2\alpha_1 u\phi-\alpha_2 w\phi-\alpha_2 u \psi \, ,\, \eta \Phi\big)\\ \psi-T_2\big( -2\beta_1 w\psi+\beta_2 \psi u+\beta_2 (v_\xi+w)\phi \, ,\, \xi \Psi\big)
\end{matrix}\right)
\eqn
and
\bqn\label{Z2}
F_{\eta,(u,w)}(\eta,u,w)[\phi,\psi]=\left(\begin{matrix} -T_1(0\, , \, \Phi)\\ 0\end{matrix}\right)
\eqn
for $(\phi,\psi)\in \Wq\times\Wq$. The choice of $\Wq$ as solution space is 
to have a suitable functional setting to work with in the framework of maximal regularity. However, as it is needed later on, we note that solutions to \eqref{7}-\eqref{6a}, i.e. to \eqref{1ab}-\eqref{4ab}, are smooth. The proof is a bootstrapping argument which we provide for the reader's ease.

\begin{lem}\label{A44}
If $(\eta_j,u_j,v_j)$ is a bounded sequence in $\R\times\Wq\times\hat{\mathbb{W}}_q$ of solutions to \eqref{7}-\eqref{6a}, then $(u_j)$ and $(v_j)$ are bounded in $C^\ve(J,C^{2+\ve}(\bar{\Om}))\cap C^{1+\ve}(J,C^{\ve}(\bar{\Om}))$
for some $\ve>0$.
\end{lem}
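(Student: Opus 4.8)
The strategy is a standard parabolic bootstrap, starting from the a priori information $(u_j,v_j)$ bounded in $\Wq$ and exploiting the embedding \eqref{emb} together with maximal $L_q$-regularity repeatedly on larger and larger integrability/Sobolev scales. First I would record that $\Wq\hookrightarrow C^{1-1/q}(J,L_q)\cap C(J,\Wqb^{2-2/q})$ and, since $q>n+2$, that $\Wqb^{2-2/q}\hookrightarrow C^1(\bar\Om)$; hence $(u_j,v_j)$ is already bounded in $C(J,C^1(\bar\Om))$, and in particular $v_j$, $\nabla_x v_j$, $u_j$ are bounded in $C(J\times\bar\Om)$. Rewriting \eqref{2a} as $\partial_a v_j-\Delta_D v_j = -\beta_1 v_j^2+\beta_2 v_j u_j =: f_j$, the right-hand side $f_j$ is bounded in $C(J\times\bar\Om)\hookrightarrow \Lq$ for every finite $q$, and the nonlocal initial value $v_j(0)=\xi V_j$ is bounded in $\Wqb^{2-2/q}$; maximal regularity (i.e. \eqref{maxreg} with $h=0$) then upgrades $v_j$ to be bounded in $\Wq$ for any $q<\infty$. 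Choosing $q$ large and using \eqref{emb} again, $v_j$ is bounded in $C^{1-1/q-\vartheta}(J,\Wqb^{2\vartheta})$, which for $\vartheta$ close to $1$ gives $v_j$ bounded in some $C^\mu(J,C^{1,\nu}(\bar\Om))$.

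Next I would feed this improved regularity of $v_\xi$-type terms into the $u$-equation. Since the cross-diffusion term is in divergence form, write \eqref{1a} as $\partial_a u_j - \divv_x\big((1+\gamma v_j)\nabla_x u_j\big) = \gamma\,\divv_x\big(u_j\nabla_x v_j\big) - \alpha_1 u_j^2 - \alpha_2 u_j v_j$; with $v_j$ now Hölder in $(a,x)$ the leading operator $A(a)u:=-\divv_x((1+\gamma v_j(a))\nabla_x u)$ has maximal $L_q$-regularity and depends continuously (indeed Hölder-continuously) on $a$, so Lemma~\ref{Jan} applies. The lower-order divergence term $\gamma\,\divv_x(u_j\nabla_x v_j)$ and the zero-order terms are bounded in $\Lq$ for any finite $q$ (using the bounds on $u_j$, $v_j$, $\nabla_x v_j$ in $C(J\times\bar\Om)$), and $u_j(0)=\eta U_j$ is bounded in $\Wqb^{2-2/q}$; hence $u_j$ is bounded in $\Wq$ for every $q<\infty$, and then, as for $v_j$, bounded in some $C^\mu(J,C^{1,\nu}(\bar\Om))$. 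At this point both $u_j$ and $v_j$ are Hölder-continuous jointly in $(a,x)$ together with their spatial gradients.

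To reach the asserted $C^\ve(J,C^{2+\ve}(\bar\Om))\cap C^{1+\ve}(J,C^\ve(\bar\Om))$ I would switch to Hölder (Schauder) theory for parabolic equations. Expanding the divergence form, \eqref{1a} reads $\partial_a u_j - (1+\gamma v_j)\Delta_x u_j = 2\gamma\nabla_x v_j\cdot\nabla_x u_j + \gamma u_j\Delta_x v_j - \alpha_1 u_j^2 - \alpha_2 u_j v_j$; but $\Delta_x v_j$ is not yet controlled in a Hölder space, so I would first return to \eqref{2a}: its right-hand side $-\beta_1 v_j^2+\beta_2 v_j u_j$ is now bounded in $C^\mu(J\times\bar\Om)$ and $v_j(0)=\xi V_j$ is smooth (being an average of the $C^{1,\nu}$-in-$x$ family $b(a)v_j(a)$, hence $C^{1,\nu}(\bar\Om)$; a further pass gives higher regularity of the trace as well), so parabolic Schauder estimates give $v_j$ bounded in $C^{\mu/2}(J,C^{2+\mu'}(\bar\Om))\cap C^{1+\mu'/2}(J,C^{\mu'}(\bar\Om))$ for suitable exponents. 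Now $\Delta_x v_j$ and $\nabla_x v_j$ are Hölder in $(a,x)$, the coefficient $1+\gamma v_j$ is Hölder and bounded below by $1$, and the whole right-hand side of the expanded $u_j$-equation is bounded in some $C^{\mu''}(J\times\bar\Om)$; the nonlocal initial datum $u_j(0)=\eta U_j$ is Hölder in $x$ as an average of $b(a)u_j(a)$, so another application of parabolic Schauder estimates yields $u_j$ bounded in $C^\ve(J,C^{2+\ve}(\bar\Om))\cap C^{1+\ve}(J,C^\ve(\bar\Om))$ for some $\ve>0$, and symmetrically for $v_j$.

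The main obstacle is bookkeeping rather than conceptual: one must track how each bootstrap step is compatible with the \emph{nonlocal} initial conditions $u_j(0)=\eta U_j$, $v_j(0)=\xi V_j$ — at every stage the regularity of the datum is exactly one notch of spatial smoothness of $u_j(a,\cdot)$, $v_j(a,\cdot)$ (no loss, since $b\in L_\infty$), so the scheme closes — and one must be careful that the variable second-order coefficient $1+\gamma v_j$ has only the Hölder regularity produced in the previous step, which is why maximal $L_q$-regularity (Lemma~\ref{Jan}) is used in the first phase and classical parabolic Schauder estimates only once the coefficient is Hölder. Uniformity of all constants in $j$ follows because every bound used depends only on the assumed $\Wq$-bound of the sequence and the fixed data $b$, $\gamma$, $\alpha_i$, $\beta_i$, $\Om$.
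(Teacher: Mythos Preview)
Your overall bootstrap strategy is sound, but there is a genuine gap at the Schauder step which your phrase ``a further pass gives higher regularity of the trace as well'' does not close. After iterating maximal $L_{q'}$-regularity you obtain $(u_j,v_j)$ bounded in $\mathbb{W}_{q'}$ for every finite $q'$, hence $V_j=\int_0^{a_m}b(a)v_j(a)\,\rd a$ bounded in $W_{q',D}^{2}$ for every $q'$, and so $V_j\in\bigcap_{q'}W_{q'}^{2}\subset C^{2-\epsilon}(\bar\Om)$ for every $\epsilon>0$. But classical parabolic Schauder estimates up to $a=0$ require the initial datum in $C^{2+\mu}(\bar\Om)$, and averaging alone never crosses the $C^{2}$ threshold: your own remark that ``the regularity of the datum is exactly one notch of spatial smoothness of $v_j(a,\cdot)$'' is precisely why the loop does not close. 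What is missing is an explicit parabolic \emph{smoothing} argument that exploits the integral over $a$ to gain the extra $\mu$ derivatives; for instance, one may use $\|e^{a\Delta_D}\|_{\mathcal{L}(C^{2-\epsilon},C^{2+\mu})}\le Ca^{-(\mu+\epsilon)/2}$ together with the variation-of-constants formula to show $a\mapsto\|v_j(a)\|_{C^{2+\mu}}$ is integrable near $a=0$, hence $V_j\in C^{2+\mu}$, and only then apply Schauder. Without this step your argument is circular.

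The paper's proof avoids both the $q$-iteration and the Schauder phase entirely. It stays at the fixed $q>n+2$ and works in the interpolation scale $F_\theta:=(F_0,F_1)_{\theta,q}$ with $F_0=\Wqb^{2-2/q}$ and $F_1=E_{2-1/q}$ (the domain of the $F_0$-realization of $-\Delta_D$), so that $F_{1-2\ve}\hookrightarrow \Wqb^{4-2/q-4\ve}\hookrightarrow C^{2+\ve}(\bar\Om)$ already. The key step is exactly the smoothing you are missing: from $v_j^0\in F_{1/q-\ve}$ (that is, $V_j\in\Wqb^{2}$) and $f_j\in C(J,F_0)$, the mild formula together with $\|e^{a\Delta_D}\|_{\mathcal{L}(F_\mu,F_\nu)}\le c_0 a^{\mu-\nu}$ gives $v_j$ bounded in $L_1(J,F_{1-\ve})$, whence $V_j\in F_{1-\ve}$; one more application of abstract parabolic regularity then yields $v_j\in C^\ve(J,F_{1-2\ve})\hookrightarrow C^\ve(J,C^{2+\ve}(\bar\Om))$. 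Thus the paper achieves in a single smoothing step what your scheme would need after the full $q$-iteration, and it makes the bootstrap of the nonlocal initial condition completely explicit.
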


\begin{proof}
To stick with the notation of \cite{LQPP}, let $(E,A):=(L_q,-\Delta_D)$ and let $[(E_\alpha,A_\alpha);\alpha\ge 0]$ be the corresponding interpolation scale induced by the real interpolation functors $(\cdot, \cdot)_{\alpha , q}$. Putting 
$$
F_0:= E_{1-1/q}\doteq\Wqb^{2-2/q}\ ,\qquad F_1:=E_{2-1/q}\ , 
$$
it follows from \cite[V.Thm.2.1.3]{LQPP} that the $F_0$-realization of $\Delta_D$, again denoted by $\Delta_D$, has domain $F_1$ and is the generator of an analytic semigroup $\{e^{a\Delta_D}\,;\,a\ge 0\}$ on $F_0$. Thus,
\bqn\label{51B}
\|e^{a\Delta_D}\|_{\mathcal{L}(F_\mu, F_\nu)}\le c_0 a^{\mu-\nu}\ ,\quad a\in J\setminus\{0\}\ ,\quad \mu,\nu\in (0,1)\ ,
\eqn
where $F_\mu:=(F_0,F_1)_{\mu, q}$ for $\mu\in (0,1)$. Note that the almost reiteration property \cite[V.Thm.1.5.3]{LQPP} ensures
\bqn\label{52}
F_{\theta^+}\hookrightarrow E_{1+\theta-1/q}\hookrightarrow F_{\theta^-}\ ,\quad 0<\theta^-<\theta<\theta^+<1\ .
\eqn
Let now  $(\eta_j,u_j,v_j)$ be a sequence of solutions to \eqref{7}-\eqref{6a} in $\R\times\Wq\times\hat{\mathbb{W}}_q$ with $\vert\eta_j\vert+\|(u_j,v_j)\|_{\X_1}\le B$, $j\in\N$, for some $B>0$.
Writing 
\bqn\label{vj}
\partial_a v_j-\Delta_D v_j=-\beta_1 v_j^2+\beta_2 v_ju_j=:f_j\ ,\quad v_j(0)=\xi V_j=: v_j^0\ ,
\eqn
it follows from the continuity of pointwise multiplication $\Wq\times\Wq\rightarrow\Wq$ (owing to $q>n+2$ and Sobolev's embedding) and \eqref{emb} that
\bqn\label{51}
\|f_j\|_{C(J,F_0)}\le c(B)\ ,\quad j\in\N\ ,
\eqn
while \eqref{7}, \eqref{10}, and the embedding $E_1\hookrightarrow F_{1/q-\ve}$ with $\ve>0$ sufficiently small entail
\bqn\label{511}
\|v_j^0\|_{F_{1/q-\ve}}\le c(B)\ ,\quad j\in\N\ ,
\eqn
for some constant $c(B)>0$. Thus, from \eqref{51B}, \eqref{vj}, \eqref{51}, and \eqref{511}, for $j\in\N$,
\bqnn
\begin{split}
\|v_j\|_{L_1(J,F_{1-\ve})} &\le \int_0^{a_m}\|e^{a\Delta_D}\|_{\mathcal{L}(F_{1/q-\ve}, F_{1-\ve})}\|v_j^0\|_{F_{1/q-\ve}}\,\rd a\\
&\quad + \int_0^{a_m}\int_0^a \|e^{(a-\sigma)\Delta_D}\|_{\mathcal{L}(F_{0}, F_{1-\ve})} \,\|f_j(\sigma)\|_{F_{0}}\,\rd \sigma\,\rd a\\
&\le c(B)\ .
\end{split}
\eqnn
Therefore, from \eqref{10} we conclude that $(v_j^0)$ is bounded in $F_{1-\ve}$. Since $(f_j)$ is bounded in $C(J,F_0)$, we deduce from \eqref{vj} and \cite[II.Thm.5.3.1]{LQPP} that $(v_j)$ is bounded in $C^\ve (J,F_{1-2\ve})$ for some $\ve>0$ sufficiently small. Now, taking \cite[Thm.5.3.4,Thm.5.4.1]{Triebel} into account which guarantee
$$
E_{2-1/q}\doteq\big(D(\Delta_D),D(\Delta_D^2)\big)_{1-1/q,q}\hookrightarrow \Wqb^{2+2(1-1/q)}\ ,
$$
with $D(\Delta_D^k)$ denoting the domain of the $k$-th power of $\Delta_D$ equipped with its graph norm, we obtain
$$
F_{1-2\ve}=(E_{1-1/q}, E_{2-1/q})_{1-2\ve,q}\hookrightarrow \big( \Wqb^{2(1-1/q)},  \Wqb^{2+2(1-1/q)}\big)_{1-2\ve,q}\doteq \Wqb^{4-2/q-4\ve}\hookrightarrow C^{2+\ve}(\bar{\Om})
$$
for $\ve>0$ sufficiently small by Sobolev's embedding theorem since $q>n+2$. Consequently, 
\bqn\label{bj}
(v_j)\ \text{ is bounded in}\ C^\ve (J,C^{2+\ve}(\bar{\Om}))\cap \hat{\mathbb{W}}_q\ .
\eqn 
But then, since
\bqn\label{u}
\partial_a u_j-\Delta_D\big((1+\gamma v_j)u_j\big) =-\alpha_1u_j^2-\alpha_2 u_j v_j\ ,\quad u_j(0)=\eta_j U_j\ ,
\eqn
we similarly conclude that $(u_j)$ is bounded in $C^\ve (J,C^{2+\ve}(\bar{\Om}))$, where the analogue of \eqref{51B} holds due to \eqref{bj} and \cite[II.$\S$5.1]{LQPP}. Finally, these observations warrant that the sequence $(\Delta_D v_j)$ is bounded in $C^\ve (J,C^{\ve}(\bar{\Om}))$ while $(-\beta_1 v_j^2-\beta_2 v_ju_j)$ is bounded in $C^\ve (J,C^{2+\ve}(\bar{\Om}))$. From \eqref{vj} we derive that $(\partial_a v_j)$ is bounded in $C^\ve (J,C^{\ve}(\bar{\Om}))$ and similarly we derive this for $(\partial_a u_j)$.
\end{proof}

Noticing that $C^\ve(J,C^{2+\ve}(\bar{\Om}))$ embeds compactly in $C^{\bar{\ve}}(J,C^{2+\bar{\ve}}(\bar{\Om}))$ and $C^{1+\ve}(J,C^{\ve}(\bar{\Om}))$ in $C^{1+\bar{\ve}}(J,C^{\bar{\ve}}(\bar{\Om}))$ for $\bar{\ve}\in(0,\ve)$, we deduce:

\begin{cor}\label{C1}
Any bounded and closed subset of $\{(\eta,u,w)\in\R\times\Wq\times\Wqh\,;\, F(\eta,u,w)=0\}$ is compact.
\end{cor}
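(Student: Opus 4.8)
The plan is to deduce the corollary directly from Lemma~\ref{A44} together with the two compact embeddings of H\"older spaces just mentioned. First I would take a bounded and closed subset $\mathcal{M}$ of the zero set of $F$ in $\R\times\Wq\times\Wqh$, and a sequence $(\eta_j,u_j,w_j)$ in $\mathcal{M}$. Recalling that $v_j:=v_\xi+w_j$, the triples $(\eta_j,u_j,v_j)$ are solutions to \eqref{7}-\eqref{6a} (this is precisely the equivalence of \eqref{1ab}-\eqref{4ab} with the original system recorded before \eqref{F}), and boundedness of $\mathcal{M}$ in $\R\times\Wq\times\Wqh$ together with $v_\xi\in\Wq$ gives boundedness of $(\eta_j,u_j,v_j)$ in $\R\times\Wq\times\Wq\subset\R\times\Wq\times\Wqh$. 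Hence Lemma~\ref{A44} applies and yields, for some $\ve>0$, boundedness of $(u_j)$ and $(v_j)$ in $C^\ve(J,C^{2+\ve}(\bar\Om))\cap C^{1+\ve}(J,C^\ve(\bar\Om))$.

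Next I would invoke the compact embeddings $C^\ve(J,C^{2+\ve}(\bar\Om))\hookrightarrow\hookrightarrow C^{\bar\ve}(J,C^{2+\bar\ve}(\bar\Om))$ and $C^{1+\ve}(J,C^\ve(\bar\Om))\hookrightarrow\hookrightarrow C^{1+\bar\ve}(J,C^{\bar\ve}(\bar\Om))$ for $\bar\ve\in(0,\ve)$, so that after passing to a subsequence $(u_j)$ and $(v_j)$ converge in $C^{\bar\ve}(J,C^{2+\bar\ve}(\bar\Om))\cap C^{1+\bar\ve}(J,C^{\bar\ve}(\bar\Om))$, hence a fortiori in $\Wq$ (since this H\"older regularity embeds into $\Wq$ via the $W_q^2$-in-space and $W_q^1$-in-time norms, both spatial and age derivatives being controlled). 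In particular $(u_j,w_j)=(u_j,v_j-v_\xi)$ converges in $\Wq\times\Wq$; also $(\eta_j)$ has a convergent subsequence since $\mathcal{M}$ is bounded in $\R$. Denote the limit by $(\eta,u,w)$.

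It remains to check that the limit lies in $\mathcal{M}$. Since $\mathcal{M}$ is closed in $\R\times\Wq\times\Wqh$ and the convergence just established takes place in that space, and since $F$ is continuous there (it is even smooth), we get $F(\eta,u,w)=0$ and $(\eta,u,w)\in\mathcal{M}$; note the limit stays in the open set $\Wqh$ because $\Wqh$ is a closed constraint only up to the strict inequality $w>-1/(2\gamma)$, and $C^{\bar\ve}$-convergence is uniform so any limit of a bounded-away sequence again satisfies it — but in fact we do not even need this, as $\mathcal{M}\subset\R\times\Wq\times\Wqh$ is closed in that ambient space by hypothesis, so the limit of a sequence in $\mathcal{M}$ that converges in $\R\times\Wq\times\Wqh$ automatically belongs to $\mathcal{M}$. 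Thus every sequence in $\mathcal{M}$ has a subsequence converging in $\mathcal{M}$, i.e.\ $\mathcal{M}$ is compact.

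The only genuinely substantive input is Lemma~\ref{A44}, which has already been proved; the remaining work is the standard soft argument that a bounded set which is mapped by the ``nonlinear part'' into a precompact set via the a~priori H\"older estimates is itself precompact once it is closed. The one point requiring a little care is the bookkeeping that boundedness of $\mathcal{M}$ in the $\Wq\times\Wqh$-topology, combined with $v_\xi\in\Wq$, indeed places the auxiliary sequence $(\eta_j,u_j,v_j)$ in a bounded subset of $\R\times\Wq\times\Wqh$ to which Lemma~\ref{A44} is applicable — but this is immediate.
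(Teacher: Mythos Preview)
Your proof is correct and follows exactly the paper's approach: the paper's entire argument is the one-line observation preceding the corollary that the H\"older spaces from Lemma~\ref{A44} embed compactly into their counterparts with smaller exponent $\bar\ve\in(0,\ve)$, which in turn embed into $\Wq$. One small slip: you write $\R\times\Wq\times\Wq\subset\R\times\Wq\times\Wqh$, but the inclusion goes the other way (you need that $v_j=v_\xi+w_j\in\Wqh$, which holds since $v_\xi\ge 0$ and $w_j>-1/(2\gamma)$); the intended meaning is clear and the argument is unaffected.
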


Let now $(\eta,u,w)\in\R\times\Wq\times\Wqh$ be fixed. We shall show that 
$$
L:=F_{(u,w)}(\eta,u,w)\in\mathcal{L}(\Wq\times\Wq)
$$ 
is a Fredholm operator. To this end, we introduce, for $a\in J$, the operators $A_{ij}(a)\in\mathcal{L}(\Wqb^2,L_q)$ by
\begin{align*}
A_{11}(a)\phi:&=-\Delta_D\big((1+\gamma v_\xi(a)+\gamma w(a))\phi\big)+\alpha_2(v_\xi(a)+w(a))\phi+2\alpha_1 u(a) \phi\   ,\\
A_{12}(a)\psi:&=-\Delta_D\big(\gamma u(a)\psi\big)+\alpha_2 u(a)\psi\   ,\\
A_{21}(a)\phi:&=-\beta_2(v_\xi(a)+w(a))\phi\   ,\\
A_{22}(a)\psi:&=-\Delta_D\psi+2\beta_1(v_\xi(a)+w(a))\psi-\beta_2\psi u(a)\   ,
\end{align*}
for $a\in J$, $\phi, \psi\in\Wqb^2$
and set
$$
\mathbb{A}(a):=\left[\begin{matrix} A_{11}(a)& A_{12}(a)\\ A_{21}(a) & A_{22}(a)
\end{matrix}\right]\ ,\quad a\in J\ .
$$
Moreover, we define
$$
\mathbb{D}(a)h:=\left(\begin{matrix}
-\Delta_D\big((1+\gamma v_\xi(a))h_1\big)+\alpha_2 v_\xi(a) h_1\\ -\Delta_D h_2+2\beta_1v_\xi(a) h_2
\end{matrix}
\right) 
\ ,\qquad h=(h_1,h_2)\in\Wqb^2\times\Wqb^2\ ,\quad a\in J\ ,
$$
so that $\mathbb{D}\in\mathcal{L}(\Wqb^2\times\Wqb^2,L_q\times L_q)$ for $a\in J$,
and we also define $\ell[\eta]\in\mathcal{L}(\Wq\times\Wq, \Wqb^2\times \Wqb^2)$ by
$$
\ell[\eta]z:=\left(\begin{matrix}
\eta\Phi \\ \xi\Psi
\end{matrix}
\right) 
\ ,\quad z=(\phi,\psi)\in\Wq\times\Wq\ .
$$
It then readily follows from \eqref{Z1} that, given $z=(\phi,\psi)$ and $h=(h_1,h_2)$ in $\Wq\times\Wq$, the equation $Lz=h$ is equivalent to
\bqn\label{66}
\partial_a z+\A(a) z=\partial_a h+\mathbb{D}(a)h\ ,\quad a\in J\ ,\qquad z(0)=\ell [\eta] z+h(0)\ .
\eqn
In the sequel, we use the notation
$$
\X_0:=\Lq\times\Lq\ ,\quad \X_1:=\Wq\times\Wq\ ,\quad X_\theta:=\Wqb^{2\theta}\times\Wqb^{2\theta}\ ,\quad \theta\in [0,1]\ .
$$
Let us first observe that

\begin{rem}\label{norm}
The space $\X_1$ can be equipped with an equivalent norm, which is continuously differentiable at all points except zero.
\end{rem}

\begin{proof}
According to \cite{Restrepo}, since $\X_1=\Wq\times\Wq$ is separable, the statement is equivalent to say that the dual space \mbox{$\X_1'=\Wq'\times\Wq'$} of $\X_1$ is separable. But, since $\Wq$ is dense in $\Lq$, the separable space $\Lq'=\mathbb{L}_{q'}$ is dense in $\Wq'$, where $1/q+1/q'=1$. So $\X_1'$ is separable.
\end{proof}

Investigation of \eqref{66} requires the following information on the involved operators:

\begin{lem}\label{L34}
The above defined operators
$(\partial_a+\A,\gamma_0)$ and $(\partial_a+\mathbb{D},\gamma_0)$ both belong to $\mathcal{L}is(\X_1,\X_0\times X_{1-1/q})$, and $\ell[\eta]$ belongs to $\mathcal{K}(\X_1,X_{1-1/q})$.
\end{lem}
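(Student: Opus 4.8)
The plan is to verify the three claims separately, in each case reducing to the maximal regularity statement \eqref{maxreg} provided by Lemma~\ref{Jan}.

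\textbf{Step 1: the operator $(\partial_a+\mathbb{D},\gamma_0)$.}
This is the diagonal ``model'' operator, and it decouples completely into the two scalar problems $(\partial_a-\Delta_D+\alpha_2 v_\xi\,\cdot\,,\gamma_0)$ acting on the first component (after absorbing $-\Delta_D(\gamma v_\xi h_1)$ into the principal part, written in divergence form exactly as for $A_1$ above) and $(\partial_a-\Delta_D+2\beta_1 v_\xi,\gamma_0)$ acting on the second. For the first component I would note that $h_1\mapsto -\Delta_D((1+\gamma v_\xi(a))h_1)$ has maximal $L_q$-regularity for each $a$ by its divergence form together with the positivity and regularity \eqref{emb} of $v_\xi$, and depends continuously on $a$; the zero-order term $\alpha_2 v_\xi$ lies in $L_q(J,\mathcal{L}(\Wqb^{2-2/q},L_q))$. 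Hence Lemma~\ref{Jan} applies to each scalar block, and taking the direct sum gives $(\partial_a+\mathbb{D},\gamma_0)\in\mathcal{L}is(\X_1,\X_0\times X_{1-1/q})$. (In fact these are precisely the operators whose inverses were named $T_1$—with $A_1+B_1=A_\xi$—and $T_2$ in \eqref{T11}, so one may simply invoke that.)

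\textbf{Step 2: the operator $(\partial_a+\mathbb{A},\gamma_0)$.}
Here the idea is to treat $\mathbb{A}$ as a perturbation of $\mathbb{D}$. Write $\mathbb{A}(a)=\mathbb{D}(a)+\mathbb{B}(a)$. The difference $\mathbb{B}(a)$ consists of: the extra principal-order terms $-\Delta_D(\gamma w(a)\phi)$ in the $(1,1)$-slot and $-\Delta_D(\gamma u(a)\psi)$ in the $(1,2)$-slot, plus the genuinely lower-order terms $\alpha_2 w(a)\phi+2\alpha_1 u(a)\phi$, $\alpha_2 u(a)\psi$, $-\beta_2(v_\xi(a)+w(a))\phi$, $2\beta_1(v_\xi(a)+w(a))\psi-\beta_2 u(a)\psi$. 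The crucial point is that although the $\Delta_D$-terms look second order, $u,w\in\Wq$ are \emph{not} merely bounded multipliers: by \eqref{emb} with $\vartheta$ close to $1$, $u(a),w(a)\in\Wqb^{2\vartheta}\hookrightarrow C^1(\bar\Om)$, so writing $\Delta_D(\gamma w(a)\phi)=\gamma\,\mathrm{div}_x(\phi\,\nabla_x w(a))+\gamma\,\mathrm{div}_x(w(a)\,\nabla_x\phi)$ we see this maps $\Wqb^{2-2/q}$ into $L_q$ \emph{boundedly}, and the map $a\mapsto$ (this operator) lies in $L_q(J,\mathcal{L}(\Wqb^{2-2/q},L_q))$ because $\|\nabla_x w(a)\|_{L_\infty}$ and $\|w(a)\|_{C^1}$ are $L_q$ in $a$ by \eqref{emb}. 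The same applies to the $\Delta_D(\gamma u(a)\psi)$ term, and the remaining terms are clearly even better. Hence $\mathbb{B}\in L_q(J,\mathcal{L}(X_{1-1/q},\X_0))$, and since $\mathbb{D}(a)$ has maximal $L_q$-regularity (by Step 1) and $\mathbb{D}\in C(J,\mathcal{L}(\X_1,\X_0))$, Lemma~\ref{Jan}—which is valid in the $\R^k$-valued setting as remarked before its statement—yields $(\partial_a+\mathbb{A},\gamma_0)=(\partial_a+\mathbb{D}+\mathbb{B},\gamma_0)\in\mathcal{L}is(\X_1,\X_0\times X_{1-1/q})$.

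\textbf{Step 3: compactness of $\ell[\eta]$.}
The operator $\ell[\eta]$ sends $z=(\phi,\psi)$ to the constant-in-$a$ pair $(\eta\Phi,\xi\Psi)=\big(\eta\int_0^{a_m}b(a)\phi(a)\,\rd a,\ \xi\int_0^{a_m}b(a)\psi(a)\,\rd a\big)\in\Wqb^2\times\Wqb^2\hookrightarrow X_{1-1/q}$. Boundedness of $\Wq\to\Wqb^2$, $\phi\mapsto\int b\,\phi\,\rd a$, is immediate since $\Wq\hookrightarrow L_q(J,\Wqb^2)$ and $b\in L_\infty$. For compactness one uses the compact embedding $\Wqb^2\hookrightarrow\Wqb^{2-2/q}=X_{1-1/q}$ on each component (as already exploited for $H_{[h]}\in\mathcal{K}(\Wqb^{2-2/q})$ above): factoring $\ell[\eta]$ as a bounded map $\X_1\to\Wqb^2\times\Wqb^2$ followed by the compact inclusion into $X_{1-1/q}$ shows $\ell[\eta]\in\mathcal{K}(\X_1,X_{1-1/q})$.

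I expect Step 2 to be the only real obstacle: one has to be careful that the ``principal-order'' difference terms in $\mathbb{B}$ genuinely drop to the regularity class $L_q(J,\mathcal{L}(X_{1-1/q},\X_0))$ required by Lemma~\ref{Jan}, which relies essentially on the high exponent $q>n+2$ forcing $\Wq\hookrightarrow L_q(J,C^1(\bar\Om))$ via \eqref{emb}, together with rewriting each $\Delta_D(\gamma\,\cdot\,u)$ or $\Delta_D(\gamma\,\cdot\,w)$ in divergence form so that only one spatial derivative falls on $\phi$ or $\psi$. Everything else is bookkeeping with the interpolation embedding \eqref{emb} and the product estimates already used in the proof of Lemma~\ref{A44}.
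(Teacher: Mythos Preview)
Your Step~1 and Step~3 are fine and agree with the paper. The gap is in Step~2.

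You claim that $\mathbb{B}:=\mathbb{A}-\mathbb{D}\in L_q\big(J,\mathcal{L}(X_{1-1/q},X_0)\big)$, and for the term $-\Delta_D(\gamma w(a)\phi)$ you write
\[
\Delta_D(\gamma w(a)\phi)=\gamma\,\mathrm{div}_x(\phi\,\nabla_x w(a))+\gamma\,\mathrm{div}_x(w(a)\,\nabla_x\phi)
\]
and assert this maps $\Wqb^{2-2/q}$ into $L_q$. The first summand is indeed fine, but the second is not: expanding $\mathrm{div}_x(w(a)\nabla_x\phi)=\nabla_x w(a)\cdot\nabla_x\phi + w(a)\,\Delta_x\phi$, the term $w(a)\,\Delta_x\phi$ requires \emph{two} derivatives of $\phi$, and for $\phi\in W_q^{2-2/q}$ one only has $\Delta_x\phi\in W_q^{-2/q}\not\subset L_q$. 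No amount of regularity on $w$ repairs this, since $w$ acts only as a multiplier. The same problem occurs for the off-diagonal piece $-\Delta_D(\gamma u(a)\psi)$ in $A_{12}$. Consequently $\mathbb{B}\notin L_q(J,\mathcal{L}(X_{1-1/q},X_0))$ and Lemma~\ref{Jan} does not apply to the decomposition $\mathbb{A}=\mathbb{D}+\mathbb{B}$.

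The paper handles this by a different splitting $\mathbb{A}=\mathbb{A}_1+\mathbb{A}_2$ in which the genuinely second-order pieces are kept in the principal part $\mathbb{A}_1$. In the $(1,1)$-slot one takes $A_{11}(a)\phi=-\Delta_D\big((1+\gamma v_\xi(a)+\gamma w(a))\phi\big)+\ldots$; this has maximal $L_q$-regularity for each fixed $a$ precisely because $1+\gamma(v_\xi+w)\ge 1/2>0$ on $J\times\bar\Om$ --- this is where the hypothesis $w\in\Wqh$ enters, and you never use it. For the off-diagonal second-order piece one writes $A_{12}=\tilde{A}_{12}+\hat{A}_{12}$ with $\tilde{A}_{12}(a)\psi=-\mathrm{div}_x(\gamma u(a)\nabla_x\psi)$ kept in $\mathbb{A}_1$; since $\mathbb{A}_1$ is then \emph{upper triangular}, one solves the frozen-coefficient system $(\partial_a+\mathbb{A}_1(a_0),\gamma_0)$ by first solving for $z_2$ and then for $z_1$, so that $\tilde{A}_{12}(a_0)z_2$ becomes a known right-hand side. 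Only the leftover $\mathbb{A}_2$ (containing $\hat{A}_{12}$ and $A_{21}$) is a genuine lower-order perturbation in $L_q(J,\mathcal{L}(X_{1-1/q},X_0))$, and Lemma~\ref{Jan} finishes the argument.
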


\begin{proof}
Writing
\bqnn
\begin{split}
A_{12}(a)\psi &=-\Delta_D\big(\gamma u(a)\psi\big)+\alpha_2 u(a)\psi\\
& = -\divv_x\big(\gamma u(a)\nabla_x\psi\big)+\big\{\alpha_2 u(a)\psi -\divv_x\big(\psi\gamma \nabla_x u(a)\big)\big\}
\end{split}
\eqnn
and using \eqref{emb}, it is readily seen that $\A$ can be written in the form
$$
\mathbb{A}:=\mathbb{A}_1+\mathbb{A}_2:=\left[\begin{matrix} A_{11}& \tilde{A}_{12}\\ 0 & A_{22}
\end{matrix}\right]+\left[\begin{matrix} 0& \hat{A}_{12}\\ A_{21} & 0
\end{matrix}\right]\ 
$$
with 
\bqn\label{Al}
\mathbb{A}_1\in C(J,\mathcal{L}(X_1,X_0))\ ,\quad \mathbb{A}_2\in L_q(J,\mathcal{L}(X_{1-1/q}, X_0))\ .
\eqn 
Recalling 
$$
1+\gamma(v_\xi(a,x)+w(a,x))\ge 1/2\ ,\quad (a,x)\in J\times\bar{\Om}\ ,
$$
due to the positivity of $v_\xi$ and $w\in\Wqh$, it follows as in \eqref{T11} that $A_{11}(a_0)$ and $A_{22}(a_0)$ have maximal $L_q$-regularity for each fixed $a_0\in J$. 
Consequently, the problem
\begin{align*}
 \partial_a z_1 +A_{11}(a_0) z_1 +\tilde{A}_{12}(a_0)z_2&=f_1(a)\ ,\quad a\in J\ ,\qquad z_1(0)=z_1^0\ ,\\
 \partial_a z_2 +A_{22}(a_0) z_2&=f_2(a)\ ,\quad a\in J\ ,\qquad z_2(0)=z_2^0\ ,
\end{align*}
admits for each $f=(f_1,f_2)\in \X_0$ and $z^0=(z_1^0,z_2^0)\in X_{1-1/q}$ a unique solution $z=(z_1,z_2)\in\X_1$ given by
\begin{align*}
& z_1=\big(\partial_a +A_{11}(a_0),\gamma_0\big)^{-1}(f_1-\tilde{A}_{12}(a_0)z_2,z_1^0)\ ,\\
& z_2=\big(\partial_a +A_{22}(a_0),\gamma_0\big)^{-1}(f_2,z_2^0)\ ,
\end{align*}
and there is some constant $c$ independent of $f$ and $z^0$ such that
$$
\|z\|_{\X_1}\le c\big(\|f\|_{\X_0}+\|z^0\|_{X_{1-1/q}})\ .
$$
Therefore, $(\partial_a+\A_1(a_0),\gamma_0)\in\mathcal{L}is(\X_1,\X_0\times X_{1-1/q})$ for each $a_0\in J$, whence $(\partial_a+\A,\gamma_0)\in\mathcal{L}is(\X_1,\X_0\times X_{1-1/q})$ by \eqref{Al} and Lemma~\ref{Jan}. Analogously we deduce the statement on $(\partial_a+\mathbb{D},\gamma_0)$. Since $\Wqb^2$ embeds compactly in $\Wqb^{2-2/q}$, the assertion on $\ell[\eta]\in\mathcal{L}(\X_1,X_1)$ is immediate.
\end{proof}

Based on Lemma~\ref{L34}, we have
$$
\Sigma:= (\partial_a+\A,\gamma_0)^{-1}\in\mathcal{L}(\X_0\times X_{1-1/q},\X_1)\qquad\text{and}\qquad Q_0 :=\big[w\mapsto\ell [\eta]\big(\Sigma(0,w)\big)\big]\in\mathcal{K}(X_{1-1/q})\ .
$$
We now show that $L$ is indeed a Fredholm operator. The proof is along the lines of \cite[Lem.2.1]{WalkerSIMA}.

\begin{prop}\label{pA1}
Let $(\eta,u,w)\in\R\times\Wq\times\Wqh$ and $L=F_{(u,w)}(\eta,u,w)\in\mathcal{L}(\X_1)$. Then $L$ is a Fredholm operator of index zero. More precisely,
\bqn\label{rg}
\im(L)=\big\{h\in\X_1\,;\, h(0)+\ell[\eta](\Sigma(\partial_a h+\mathbb{D} h, 0))\in \im(1-Q_0)\big\}\ 
\eqn
is closed in $\X_1$ and
$$
\kk(L)=\big\{\Sigma(0,w)\,;\ w\in\kk(1-Q_0)\big\}
$$
with
$$
\dimens(\kk(L))=\codim(\im(L))=\dimens(\kk(1-Q_0))<\infty\ .
$$
\end{prop}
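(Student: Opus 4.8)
The plan is to solve the abstract evolution equation \eqref{66} explicitly using the isomorphisms provided by Lemma~\ref{L34}, thereby reducing the Fredholm property of $L$ to that of the finite-rank perturbation $1-Q_0$ of the identity on $X_{1-1/q}$. First I would note that, by Lemma~\ref{L34}, $\Sigma=(\partial_a+\A,\gamma_0)^{-1}$ is an isomorphism of $\X_0\times X_{1-1/q}$ onto $\X_1$, and similarly $(\partial_a+\mathbb{D},\gamma_0)$ is an isomorphism. Given $h=(h_1,h_2)\in\X_1$, the right-hand side $\partial_a h+\mathbb{D}(\cdot)h$ lies in $\X_0$ and $h(0)\in X_{1-1/q}$, so for every $z\in\X_1$ the identity $Lz=h$ is equivalent to \eqref{66}, i.e. to
$$
z=\Sigma\big(\partial_a h+\mathbb{D} h,\ \ell[\eta]z+h(0)\big)\ .
$$
Splitting $\Sigma$ by linearity, write $z=\Sigma(\partial_a h+\mathbb{D} h, h(0))+\Sigma(0,\ell[\eta]z)$. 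Applying $\gamma_0$ and using that $\gamma_0\Sigma(0,\cdot)=\mathrm{id}$ on $X_{1-1/q}$ while $\gamma_0\Sigma(\partial_a h+\mathbb{D} h, h(0))=h(0)$, one sees that $z$ is determined by its trace $z(0)=:\zeta\in X_{1-1/q}$ via $z=\Sigma(\partial_a h+\mathbb{D} h,0)+\Sigma(0,\zeta)$, and $\zeta$ must satisfy
$$
\zeta=\ell[\eta]\big(\Sigma(\partial_a h+\mathbb{D} h,0)+\Sigma(0,\zeta)\big)+h(0)=Q_0\zeta+\ell[\eta]\Sigma(\partial_a h+\mathbb{D} h,0)+h(0)\ ,
$$
that is, $(1-Q_0)\zeta=h(0)+\ell[\eta]\Sigma(\partial_a h+\mathbb{D} h,0)$.

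From this equivalence the three assertions follow directly. For the kernel: $Lz=0$ forces $h=0$, hence $z=\Sigma(0,\zeta)$ with $(1-Q_0)\zeta=0$, and conversely each such $z$ lies in $\kk(L)$; since $\zeta\mapsto\Sigma(0,\zeta)$ is injective (it is a restriction of the isomorphism $\Sigma$), this gives $\kk(L)=\{\Sigma(0,w):w\in\kk(1-Q_0)\}$ and $\dimens\kk(L)=\dimens\kk(1-Q_0)$, which is finite because $Q_0\in\mk(X_{1-1/q})$ and the classical Riesz--Schauder theory applies. For the range: $h\in\im(L)$ iff the equation $(1-Q_0)\zeta=h(0)+\ell[\eta]\Sigma(\partial_a h+\mathbb{D} h,0)$ is solvable in $\zeta$, i.e. iff $h(0)+\ell[\eta]\Sigma(\partial_a h+\mathbb{D} h,0)\in\im(1-Q_0)$, which is precisely \eqref{rg}. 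Closedness of $\im(L)$ then follows because $\im(1-Q_0)$ is closed in $X_{1-1/q}$ (again Riesz--Schauder) and the map $h\mapsto h(0)+\ell[\eta]\Sigma(\partial_a h+\mathbb{D} h,0)$ is a bounded linear operator $\X_1\to X_{1-1/q}$; the preimage of a closed set under a bounded operator is closed. Finally, $\codim\im(L)=\codim\im(1-Q_0)=\dimens\kk(1-Q_0)$ since $1-Q_0$ is a Fredholm operator of index zero on $X_{1-1/q}$; combining, $\dimens\kk(L)=\codim\im(L)$, so $L$ is Fredholm of index zero.

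The one point that needs a little care — and which I regard as the main (mild) obstacle — is the bookkeeping with the trace operator $\gamma_0$ when inverting \eqref{66}: one must make sure that $\Sigma(\partial_a h+\mathbb{D} h, h(0))$ really has trace $h(0)$ (immediate from $\gamma_0\Sigma=\mathrm{pr}_2$) and, more substantively, that the ``data'' $\partial_a h+\mathbb{D} h$ genuinely lies in $\X_0$ for $h\in\X_1$, which is where $\mathbb{D}\in\mathcal{L}(X_1,X_0)$ together with $\partial_a\in\mathcal{L}(\Wq,\Lq)$ is used, and that $h(0)\in X_{1-1/q}$, which is the trace embedding \eqref{emb}. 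Once these identifications are in place, everything reduces to the standard fact that a compact perturbation of the identity is Fredholm of index zero, so no genuinely new estimate is required beyond Lemma~\ref{L34}.
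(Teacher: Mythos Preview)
Your reduction of $Lz=h$ to the finite-dimensional equation $(1-Q_0)\zeta=h(0)+\ell[\eta]\Sigma(\partial_a h+\mathbb{D} h,0)$ is exactly the paper's route (its equations (17f)--(18f)), and your characterizations of $\kk(L)$ and $\im(L)$, together with the closedness argument, are correct and match the paper.

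The one place where your argument is thinner than the paper's is the line ``$\codim\im(L)=\codim\im(1-Q_0)$''. You have shown $\im(L)=T^{-1}(\im(1-Q_0))$ for the bounded linear map $T:\X_1\to X_{1-1/q}$, $Th:=h(0)+\ell[\eta]\Sigma(\partial_a h+\mathbb{D} h,0)$, and you then pass from codimension in $X_{1-1/q}$ to codimension in $\X_1$ without comment. That step requires $T$ to be surjective (otherwise the preimage could have strictly smaller codimension). Surjectivity is true and easy --- for any $\zeta\in X_{1-1/q}$ take $h:=(\partial_a+\mathbb{D},\gamma_0)^{-1}(0,\zeta)$, so that $\partial_a h+\mathbb{D}h=0$ and $h(0)=\zeta$, whence $Th=\zeta$ --- but it should be said. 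The paper handles this point more laboriously by building an explicit projection $\mathcal{P}\in\mathcal{L}(\X_1)$ onto $\im(L)$ (using $\Lambda=(\partial_a+\mathbb{D},\gamma_0)^{-1}$ and a direct-sum decomposition $X_{1-1/q}=\im(1-Q_0)\oplus N$) and then identifying $\kk(\mathcal{P})$ with $\{\Lambda(0,n):n\in N\}$. Your shortcut via surjectivity of $T$ is cleaner; just make the surjectivity explicit.
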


\begin{proof}
Owing to \eqref{66} and Lemma~\ref{L34}, for $z,h\in \X_1$, the 
equation
$Lz=h$
is equivalent to
\begin{align}
z&=\Sigma(\partial_a h+\mathbb{D} h, 0)+\Sigma(0,z(0))\ ,\label{17f}\\
(1-Q_0)z(0)&= \ell[\eta]\big(\Sigma(\partial_a h+\mathbb{D} h, 0)\big)+ h(0)\ .\label{18f}
\end{align}
If $1$ belongs to the resolvent set of $Q_0\in\mathcal{K}(X_{1-1/q})$, then \eqref{17f}, \eqref{18f} entail a trivial kernel $\kk(L)$. Moreover, in this case, for an arbitrary $h\in \X_1$, there is a unique $z(0)\in X_{1-1/q}$ solving \eqref{18f}, thus the corresponding $z\in\X_1$ given by \eqref{17f} is the unique solution to $Lz=h$. This easily gives the assertion in this case.

Otherwise, if $1$ is an eigenvalue of $Q_0\in\mathcal{K}(X_{1-1/q})$, then \eqref{17f}, \eqref{18f} yield the characterization of $\kk(L)$ and $\im(L)$ as stated. In particular, 
since $\Sigma$ is an isomorphism, we deduce $\mathrm{dim}(\kk(L))=\mathrm{dim}(\kk(1-Q_0))$ which is a finite number because 1 is an eigenvalue of the compact operator $Q_0$. Moreover, $\im(L)$ is closed in $\X_1$ since $M:=\im(1-Q_0)$ is closed by the compactness of $Q_0$ and due to Lemma~\ref{L34} and \eqref{emb}.
To compute $\codim(\im(L))$, note that $$\codim(M)=\dimens(\kk(1-Q_0))<\infty\ ,$$ hence $M$ is complemented in $X_{1-1/q}$ leading to a direct sum decomposition $X_{1-1/q}=M\oplus N$. Denoting by \mbox{$P_M\in\ml(X_{1-1/q})$} a projection onto $M$ along $N$, we set
\bqn\label{18B}
\mathcal{P}h:=\Lambda\big(\partial_a h+\mathbb{D}h\,,\, P_Mh (0)-(1-P_M)\ell[\eta](\Sigma(\partial_a h+\mathbb{D}h , 0))\big)\ ,\quad h\in\X_1\ ,
\eqn
where $\Lambda:=(\partial_a +\mathbb{D},\gamma_0)^{-1}\in\mathcal{L}(\X_0\times X_{1-1/q},\X_1)$, and obtain $\mathcal{P}\in\mathcal{L}(\X_1)$ from Lemma~\ref{L34}. Since
$$
\big(\partial_a +\mathbb{D}\big)(\mathcal{P}h)=\partial_a h+\mathbb{D}h
\ ,\qquad \gamma_0(\mathcal{P}h)=  P_Mh (0)-(1-P_M)\ell[\eta](\Sigma(\partial_a h+\mathbb{D}h , 0))\ ,
$$
the characterization \eqref{rg} actually implies that $\mathcal{P}$ maps $\X_1$ into $\im(L)$. Furthermore, if $h\in \im(L)$, then \eqref{rg} also ensures  $$\mathcal{P}h=\Lambda(\partial_ah +\mathbb{D}h,h(0))=h\ ,$$ so  $\mathcal{P}(\im(L))=\im(L)$. Thus $\mathcal{P}^2=\mathcal{P}$ with $\im(\mathcal{P})=\im(L)$ is a projection and $\X_1=\im(L)\oplus\kk (\mathcal{P})$. Since $\Lambda$ is an isomorphism, we obtain 
$$
\kk (\mathcal{P})=\{h\in\X_1\,;\, \partial_ah +\mathbb{D}h=0, h(0)\in N\}\ ,
$$
from which we deduce the equality of the dimension of $N$ and $\kk (\mathcal{P})$ and thus the statement.
\end{proof}

\begin{cor}\label{AA1}
For $k\in (0,1)$ and $(\eta,u,w)\in\R\times\Wq\times\Wqh$,
$$
(1-k)F_{(u,w)}(\eta,0,0)+k F_{(u,w)}(\eta,u,w)\in\mathcal{L}(\X_1)$$ is a Fredholm operator of index zero.
\end{cor}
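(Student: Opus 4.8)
The plan is to reduce Corollary~\ref{AA1} to Proposition~\ref{pA1} by recognizing the convex combination as itself a Fr\'echet derivative $F_{(u,w)}$ at a suitable point, or, failing that, as a compact perturbation of such a derivative. First I would write out, using \eqref{Z1}, the two operators $L_0:=F_{(u,w)}(\eta,0,0)$ and $L_1:=F_{(u,w)}(\eta,u,w)$ explicitly. Both have the structure $\mathrm{id}-T_i(\cdot)$ composed with linear differential operators; the key observation is that the principal (second-order, divergence-form) parts of $L_0$ and $L_1$ differ only through the coefficient $1+\gamma v_\xi$ versus $1+\gamma v_\xi+\gamma w$ in the $(1,1)$-entry and through the presence of a $\gamma u$-coefficient in the $(1,2)$-entry, while all the zeroth-order terms (the $\alpha_j$-, $\beta_j$-, and $u$-dependent multipliers) only contribute operators in $L_q(J,\mathcal{L}(\Wqb^{2-2/q},L_q))$. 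Hence the convex combination $(1-k)L_0+kL_1$ again has the form: identity minus the solution operator of a parabolic problem $\partial_a z+\mathbb{A}_k(a)z=\cdots$, $z(0)=\ell[\eta]z+\cdots$, where $\mathbb{A}_k$ splits exactly as in Lemma~\ref{L34} into a part $\mathbb{A}_{k,1}\in C(J,\mathcal{L}(X_1,X_0))$ whose diagonal entries $A_{11}^{(k)}(a)$, $A_{22}^{(k)}(a)$ carry the coefficient $1+\gamma v_\xi+k\gamma w\ge 1-k/2\ge 1/2$ (so they still have maximal $L_q$-regularity by the divergence-form argument used for \eqref{T11}) plus a part $\mathbb{A}_{k,2}\in L_q(J,\mathcal{L}(X_{1-1/q},X_0))$.

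With that structural observation in hand, the argument of Lemma~\ref{L34} applies verbatim with $\mathbb{A}$ replaced by $\mathbb{A}_k$: one solves the upper-triangular constant-$a_0$ problem, invokes maximal regularity of $A_{11}^{(k)}(a_0)$ and $A_{22}^{(k)}(a_0)$, then uses Lemma~\ref{Jan} together with the splitting to conclude $(\partial_a+\mathbb{A}_k,\gamma_0)\in\mathcal{L}is(\X_1,\X_0\times X_{1-1/q})$; the operator $\ell[\eta]$ and the inhomogeneous lower-order term $\mathbb{D}_k$ are unchanged in nature and still lie in $\mathcal{K}(\X_1,X_{1-1/q})$ respectively $\mathcal{L}(X_1,X_0)$. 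Then the proof of Proposition~\ref{pA1} goes through word for word: one writes the equation $\big((1-k)L_0+kL_1\big)z=h$ in the fixed-point form \eqref{17f}--\eqref{18f} with $\Sigma$ replaced by $\Sigma_k:=(\partial_a+\mathbb{A}_k,\gamma_0)^{-1}$ and $Q_0$ replaced by $Q_{0,k}:=[w\mapsto\ell[\eta](\Sigma_k(0,w))]\in\mathcal{K}(X_{1-1/q})$, and the Fredholm-of-index-zero conclusion follows from the fact that $1-Q_{0,k}$ is a compact perturbation of the identity, with kernel and cokernel of equal finite dimension, exactly as before.

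The only point needing a little care — and the main (mild) obstacle — is to confirm that for every $k\in(0,1)$ the diagonal operators $A_{11}^{(k)}(a)$ and $A_{22}^{(k)}(a)$ genuinely still have maximal $L_q$-regularity: this rests on the ellipticity bound $1+\gamma v_\xi+k\gamma w\ge 1/2$, which holds uniformly in $k\in[0,1]$ because $w\in\Wqh$ gives $\gamma w>-1/2$ pointwise and $v_\xi\ge0$, so the convex combination of the coefficient $1$ (from $L_0$) and $1+\gamma v_\xi+\gamma w$ (from $L_1$) stays $\ge 1/2$; the smoothness in $a$ needed to apply \cite[III.Ex.4.7.3, III.Thm.4.10.7]{LQPP} comes, as in the discussion preceding \eqref{T11}, from \eqref{emb}. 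Once this is noted, everything else is a verbatim repetition of Lemma~\ref{L34} and Proposition~\ref{pA1}, so in the write-up I would simply state that the convex combination has the same structure with the coefficient in the diagonal second-order part replaced by $1+\gamma v_\xi+k\gamma w\ge 1/2$, remark that the lower-order terms contribute the same function spaces, and conclude that the arguments of Lemma~\ref{L34} and Proposition~\ref{pA1} apply mutatis mutandis.
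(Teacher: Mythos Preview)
Your approach is correct and is essentially the same as the paper's: the paper simply notes that $(1-k)F_{(u,w)}(\eta,0,0)+kF_{(u,w)}(\eta,u,w)$ ``has the same structure as $F_{(u,w)}(\eta,u,w)$'' and implicitly invokes Lemma~\ref{L34} and Proposition~\ref{pA1}, which is exactly what you spell out in detail. One small slip to fix in the write-up: the second-order coefficient coming from $L_0$ is $1+\gamma v_\xi$ (since $T_1$ is built from $A_\xi$), not $1$, so the convex combination is $(1-k)(1+\gamma v_\xi)+k(1+\gamma v_\xi+\gamma w)=1+\gamma v_\xi+k\gamma w$, which you state correctly earlier; also note $A_{22}^{(k)}$ has principal part $-\Delta_D$ independently of $k$, so the ellipticity issue concerns only $A_{11}^{(k)}$.
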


\begin{proof}
Since, by \eqref{Z1},
\bqn\label{Z11}
F_{(u,w)}(\eta,0,0) [\phi,\psi]=\left(\begin{matrix} \phi-T_1\big(0 \, ,\, \eta \Phi\big)\\ \psi-T_2\big( \beta_2 v_\xi\phi \, ,\, \xi \Psi\big)
\end{matrix}\right)\ ,
\eqn
the operator $$(1-k)F_{(u,w)}(\eta,0,0)+k F_{(u,w)}(\eta,u,w)$$ has the same structure as $F_{(u,w)}(\eta,u,w)$.
\end{proof}

It follows from Lemma~\ref{A44} that the operator $A_\xi$, given by
\bqn\label{Axi}
A_\xi(a) \phi:=-\Delta_D\big( (1+\gamma v_\xi(a))\phi\big) +\alpha_2 v_\xi (a) \phi\ ,\quad a\in J\ ,\quad \phi\in\Wqb^2\ ,
\eqn
belongs to $C^\ve(J,\mathcal{L}(\Wqb^2,L_q))$, while the positivity of $v_\xi$ ensures that $-A_\xi(a)$ is for each $a\in J$ the generator of an analytic semigroup on $L_q$. Consequently, it generates a parabolic evolution operator $\Pi_{A_\xi}(a,\sigma)$, $0\le\sigma\le a\le a_m$, in view of \cite[II.Cor.4.4.2]{LQPP}. Note that $\Pi_{A_\xi}(a,0)$ for $a>0$ is strongly positive on $\Wqb^{2-2/q}$, see e.g. \cite[Cor.13.6]{DanersKochMedina}. We then set 
\bqn\label{Z111}
G_{\xi}:=\int_0^{a_m}b(a)\,\Pi_{A_\xi}(a,0)\,\rd a
\eqn
and obtain from \eqref{10} and the compact embedding of $\Wqb^2$ in $\Wqb^{2-2/q}$ that  $G_\xi\in\mk(\Wqb^{2-2/q})$ is strongly positive. Thus, by the Krein-Rutman theorem, $r(G_\xi)>0$ is a simple eigenvalue of $G_\xi$ with an eigenvector in the interior of the positive cone $\Wqb^{2-2/q,+}$. Let then
\bqn\label{eta0}
\eta_0:=\eta_0(\xi):=\frac{1}{r(G_{\xi})}>0\ ,\quad \kk(1-\eta_0G_\xi)=\mathrm{span}\{\Phi_0\}\ ,\quad \Phi_0\in \mathrm{int}(\Wqb^{2-2/q,+})\ .
\eqn
 We define
\bqn\label{phistern}
\phi_*(a):=\Pi_{A_\xi}(a,0)\Phi_0\ ,\quad a\in J\ ,\qquad \Phi_*:=\int_0^{a_m}b(a)\phi_*(a)\,\rd a\ ,
\eqn
and, using the notation of Subsection~\ref{sec 2.1},
\bqn\label{psistern}
\psi_*(a):=\Pi_{[2\beta_1 v_\xi]}(a,0)\Psi_0+\int_0^a \Pi_{[2\beta_1 v_\xi]}(a,\sigma)\,\big(\beta_2 v_\xi (\sigma)\phi_*(\sigma)\big)\,\rd \sigma\ ,\quad a\in J\ ,
\eqn
where 
$$
\Psi_0:=\xi \big(1-\xi H_{[2\beta_1 v_\xi]}\big)^{-1}\left(\int_0^{a_m} b(a)\int_0^a \Pi_{[2\beta_1 v_\xi]}(a,\sigma)\,\big(\beta_2 v_\xi (\sigma)\phi_*(\sigma)\big)\,\rd \sigma\,\rd a\right)\ .
$$
Note that $\Psi_0$ is well-defined since $1-\xi H_{[2\beta_1 v_\xi]}$ is invertible owing to Lemma~\ref{A2}, \eqref{sp}, and $v_\xi\in\Wqd$ which ensure $r(\xi  H_{[2\beta_1 v_\xi]})<1$. Also note, from \eqref{maxreg} and \eqref{T11}, that $\phi_*$ and $\psi_*$ both belong to $\Wqd$. 

\begin{lem}\label{A33}
The kernel of $F_{(u,w)}(\eta_0,0,0)$ is spanned by $(\phi_*,\psi_*)$, and $F_{\eta, (u,w)}(\eta_0,0,0)[\phi_*,\psi_*]$ does not belong to the range of $F_{(u,w)}(\eta_0,0,0)$. Moreover,  $\Phi_0=\eta_0\Phi_*$.
\end{lem}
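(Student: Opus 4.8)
The plan is to solve the kernel equation $F_{(u,w)}(\eta_0,0,0)[\phi,\psi]=0$ explicitly using the representation \eqref{Z11}. Setting the first component to zero gives $\phi=T_1(0,\eta_0\Phi)$, which by the definition of $T_1$ and the identity $A_\xi=A_1+B_1$ means precisely that $\phi$ is the strong solution of $\partial_a\phi+A_\xi(a)\phi=0$ with $\phi(0)=\eta_0\Phi$, i.e. $\phi(a)=\eta_0\Pi_{A_\xi}(a,0)\Phi$. Applying $\int_0^{a_m}b(a)\,\cdot\,\rd a$ and using \eqref{Z111} yields $\Phi=\eta_0 G_\xi\Phi$, so $(1-\eta_0 G_\xi)\Phi=0$; by \eqref{eta0} this forces $\Phi\in\mathrm{span}\{\Phi_0\}$. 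Normalizing $\Phi=\Phi_0$ (the kernel is at most one-dimensional by the structure of the argument), we get $\phi=\phi_*$ as in \eqref{phistern}, and in particular $\Phi=\eta_0\Phi_*$ after integrating $\phi(a)=\eta_0\phi_*(a)/\eta_0$ — more carefully, $\phi_*(a)=\Pi_{A_\xi}(a,0)\Phi_0$ so $\phi(a)=\eta_0\phi_*(a)$ would give $\Phi=\eta_0\Phi_*$; I will fix the normalization so that $\phi=\phi_*$, which forces $\Phi_0=\eta_0\Phi_*$ since $\Phi_0=\Phi=\int b\phi\,\rd a=\eta_0\int b\phi_*\,\rd a=\eta_0\Phi_*$. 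This establishes the last assertion of the lemma.

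Next I would turn to the second component of the kernel equation: with $\phi=\phi_*$ already determined, $\psi=T_2(\beta_2 v_\xi\phi_*,\xi\Psi)$ says that $\psi$ is the strong solution of $\partial_a\psi-\Delta_D\psi+2\beta_1 v_\xi\psi=\beta_2 v_\xi\phi_*$ with $\psi(0)=\xi\Psi$. By the variation-of-constants formula with the evolution operator $\Pi_{[2\beta_1 v_\xi]}$, this gives $\psi(a)=\Pi_{[2\beta_1 v_\xi]}(a,0)\psi(0)+\int_0^a\Pi_{[2\beta_1 v_\xi]}(a,\sigma)\beta_2 v_\xi(\sigma)\phi_*(\sigma)\,\rd\sigma$. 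Applying $\int_0^{a_m}b(a)\,\cdot\,\rd a$ and using the definition of $H_{[2\beta_1 v_\xi]}$ together with $\psi(0)=\xi\Psi$ produces the fixed-point relation $\Psi=\xi H_{[2\beta_1 v_\xi]}\Psi+\xi\int_0^{a_m}b(a)\int_0^a\Pi_{[2\beta_1 v_\xi]}(a,\sigma)\beta_2 v_\xi(\sigma)\phi_*(\sigma)\,\rd\sigma\,\rd a$. Since $r(\xi H_{[2\beta_1 v_\xi]})<1$ (noted after \eqref{psistern}, from Lemma~\ref{A2} and \eqref{sp}), $1-\xi H_{[2\beta_1 v_\xi]}$ is invertible and $\Psi=\Psi_0$ as defined, hence $\psi=\psi_*$. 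Conversely, reversing these steps shows $(\phi_*,\psi_*)$ indeed lies in the kernel, and the derivation shows the kernel is exactly $\mathrm{span}\{(\phi_*,\psi_*)\}$, which is one-dimensional because $\phi_*\neq 0$.

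For the transversality (Crandall–Rabinowitz) condition, I compute $F_{\eta,(u,w)}(\eta_0,0,0)[\phi_*,\psi_*]=(-T_1(0,\Phi_*),0)$ from \eqref{Z2}. Suppose for contradiction this lies in $\im(F_{(u,w)}(\eta_0,0,0))$, i.e. there is $(\phi,\psi)$ with $\phi-T_1(0,\eta_0\Phi)=-T_1(0,\Phi_*)$ and $\psi-T_2(\beta_2 v_\xi\phi,\xi\Psi)=0$. Reading the first equation through $T_1$: $\partial_a\phi+A_\xi\phi=0$, $\phi(0)=\eta_0\Phi-\Phi_*$, so $\phi(a)=\Pi_{A_\xi}(a,0)(\eta_0\Phi-\Phi_*)$; integrating against $b$ gives $\Phi=G_\xi(\eta_0\Phi-\Phi_*)=\eta_0 G_\xi\Phi-G_\xi\Phi_*$, i.e. $(1-\eta_0 G_\xi)\Phi=-G_\xi\Phi_*$. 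The right-hand side is $-G_\xi\Phi_*$; since $\Phi_0=\eta_0\Phi_*$ (just proved) and $G_\xi\Phi_0=r(G_\xi)\Phi_0=\eta_0^{-1}\Phi_0$, we have $G_\xi\Phi_*=\eta_0^{-1}G_\xi\Phi_0=\eta_0^{-2}\Phi_0$, a nonzero multiple of $\Phi_0$. But $\Phi_0$ generates $\kk(1-\eta_0 G_\xi)$, and since $G_\xi$ is strongly positive with $r(G_\xi)$ a simple eigenvalue, the Fredholm alternative (Lemma~\ref{A2}, Krein–Rutman) gives $\Phi_0\notin\im(1-\eta_0 G_\xi)$ — so the equation $(1-\eta_0 G_\xi)\Phi=-\eta_0^{-2}\Phi_0$ has no solution, a contradiction. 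The main obstacle is bookkeeping: one must carefully track how the abstract operators $T_1,T_2$ translate into the evolution-operator representations, and invoke simplicity of $r(G_\xi)$ in exactly the right place (to conclude $\Phi_0\notin\im(1-\eta_0 G_\xi)$); the second component $\psi$ plays no role in the transversality obstruction since $F_\eta$ has vanishing second component, so the argument reduces entirely to the scalar-type operator $G_\xi$ on $\Wqb^{2-2/q}$.
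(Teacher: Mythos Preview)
Your approach is essentially the paper's: translate the kernel and range equations via $T_1,T_2$ into the parabolic problems $\partial_a\phi+A_\xi\phi=0$, $\phi(0)=\eta_0\Phi$ and $\partial_a\psi-\Delta_D\psi+2\beta_1v_\xi\psi=\beta_2v_\xi\phi$, $\psi(0)=\xi\Psi$, solve via $\Pi_{A_\xi}$ and $\Pi_{[2\beta_1 v_\xi]}$, reduce to $(1-\eta_0 G_\xi)\Phi_0=0$ and the invertibility of $1-\xi H_{[2\beta_1 v_\xi]}$, and for transversality derive a contradiction from the simplicity of the eigenvalue $1$ of $\eta_0 G_\xi$ (the paper phrases this as $\Phi_0\in\kk(1-\eta_0 G_\xi)\cap\im(1-\eta_0 G_\xi)$, which is exactly your Fredholm-alternative conclusion).

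Two bookkeeping slips to fix: the chain ``$\Phi_0=\Phi=\int b\phi\,\rd a=\eta_0\int b\phi_*\,\rd a=\eta_0\Phi_*$'' is wrong at both nontrivial equalities when $\phi=\phi_*$ (then $\Phi=\Phi_*$, and there is no factor $\eta_0$); the correct route is $\phi_*(0)=\Phi_0$ combined with the kernel condition $\phi(0)=\eta_0\Phi$ to obtain $\Phi_0=\eta_0\Phi_*$. Likewise, your fixed-point relation for $\Psi$ carries a spurious $\xi$ on the inhomogeneous term; the correct relation is $(1-\xi H_{[2\beta_1 v_\xi]})\Psi=\int_0^{a_m}b(a)\int_0^a\Pi_{[2\beta_1 v_\xi]}(a,\sigma)\beta_2 v_\xi(\sigma)\phi_*(\sigma)\,\rd\sigma\,\rd a$, and it is $\psi(0)=\xi\Psi$ (not $\Psi$) that equals the paper's $\Psi_0$.
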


\begin{proof}
Observe that $(\phi,\psi)\in\X_1$ belonging to the kernel of $F_{(u,w)}(\eta_0,0,0)$ is equivalent to
\begin{align*}
\partial_a \phi-\Delta_D\big((1+\gamma v_\xi)\phi\big)+\alpha_2 v_\xi\phi &=0\ ,& \!\!\!\!\!\!\!\!\!\!\!\!\!\!\!\!\phi(0)=\eta_0\Phi\ ,\\
\partial_a \psi-\Delta_D \psi+2\beta_1v_\xi\psi&=\beta_2 v_\xi\phi\ ,&  \psi(0)=\xi\Psi\ , 
\end{align*}
according to \eqref{Z11} and the definitions of $T_1$ and $T_2$. Now, the first assertion follows from \eqref{Axi}-\eqref{psistern} by solving for $\phi$ and $\psi$. Next, suppose $F_{\eta, (u,w)}(\eta_0,0,0)[\phi_*,\psi_*]$ belongs to the range of $F_{(u,w)}(\eta_0,0,0)$. Then, in view of \eqref{Z2}, \eqref{Z11}, and the definition of $T_1$, there is $\phi\in\Wq$ with
$$
\partial_a\phi+A_\xi\phi=0\ ,\quad \phi(0)=\eta_0\Phi-\Phi_*\ ,
$$
so
$
\phi(a)=\Pi_{A_\xi}(a,0)\phi(0)$, $a\in J$ and whence
$
(1-\eta_0G_\xi)\phi(0)=-\Phi_*$.
Since $\Phi_0=\eta_0\Phi_*$ by definition of $\phi_*$ and \eqref{eta0}, we conclude
$$
\Phi_0\in\kk(1-\eta_0G_\xi)\cap\im(1-\eta_0G_\xi)
$$
what is impossible since $\eta_0G_\xi$ is compact with simple eigenvalue $1$.
\end{proof}

\subsection{Proof of Theorem~\ref{T1}}\label{sec 2.3}

Having established the necessary auxiliary results in the previous subsection, we are now in a position to prove Theorem~\ref{T1} by applying \cite[Thm.4.3,Thm.4.4]{ShiWang}. Recall that, writing
$(\eta,u,v)=(\eta,u,v_\xi+w)$, the solutions $(\eta,u,v)$ to \eqref{7}-\eqref{6a} are obtained as the zeros $(\eta,u,w)$ of the smooth function $F$ defined in \eqref{F}. Also recall that $\eta_0=\eta_0(\xi)$ is given in \eqref{eta0}.

As in the second part of the proof of Lemma~\ref{A33}, 
$$
\kk(F_{(u,w)}(\eta_0,0,0))\cap \im(F_{(u,w)}(\eta_0,0,0))=\{0\}\ ,
$$
whence $$\X_1=\mathrm{span}\{(\phi_*,\psi_*)\}\oplus \im(F_{(u,w)}(\eta_0,0,0))$$  by \cite[Lem.2.7.9]{BuffoniToland} and Lemma~\ref{A33}.
In view of  Proposition~\ref{pA1} and Lemma~\ref{A33} we may apply \cite[Thm.4.3]{ShiWang}. Therefore, there are $\ve>0$ and continuous functions 
$$
\eta:(-\ve,\ve)\rightarrow\R\ ,\qquad (\theta_1,\theta_2):(-\ve,\ve)\rightarrow \im(F_{(u,w)}(\eta_0,0,0))
$$ 
with \mbox{$\eta(0)=\eta_0$} and $(\theta_1,\theta_2)(0)=(0,0)$ such that the solutions to \eqref{7}-\eqref{6a} near $(\eta_0,0,v_\xi)$ are exactly the semi-trivial ones $(\tilde{\eta},0,v_\xi)$, $\tilde{\eta}\ge 0$, and the ones lying on the curve
$\Gamma:=\Gamma_+\cup\Gamma_-\cup\{(\eta_0,0,v_\xi)\}$,
where
$$
\Gamma_\pm:=\big\{\big(\eta(s),s\phi_*+s\theta_1(s), v_\xi+s\psi_*+s\theta_2(s)\big)\,;\, 0<\pm s<\ve\big\}\ .
$$
Moreover, $\Gamma$ is contained in $\mathfrak{C}_*$, which is a connected component of the closure of $$
\mathcal{S}:=\{(\eta,u,v_\xi+w)\,;\, F(\eta,u,w)=0\, ,\, (u,w)\not= (0,0)\}\ .
$$
Being merely interested in positive solutions, we first note:
\begin{lem}\label{pos}
The curve $\Gamma_+$ lies in $\R^+\times\Wqd\times\Wqd$.
\end{lem}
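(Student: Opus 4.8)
The plan is to verify that, for small $s>0$, the component $u$-part $s\phi_*+s\theta_1(s)$ lies in $\Wqd=\Wq^+\setminus\{0\}$ and the $v$-part $v_\xi+s\psi_*+s\theta_2(s)$ lies in $\Wqd$ as well. The key structural facts I would exploit are: (a) $\phi_*(a)=\Pi_{A_\xi}(a,0)\Phi_0$ with $\Phi_0\in\mathrm{int}(\Wqqp)$ and the strong positivity of $\Pi_{A_\xi}(a,0)$ on $\Wqb^{2-2/q}$, which force $\phi_*(a)\in\mathrm{int}(\Wqqp)$ for $a\in(0,a_m]$ and $\phi_*(0)=\Phi_0\in\mathrm{int}(\Wqqp)$; and (b) analogously, $\psi_*$ as defined in \eqref{psistern} is a sum of a term $\Pi_{[2\beta_1 v_\xi]}(a,0)\Psi_0$ and a nonnegative integral term $\int_0^a\Pi_{[2\beta_1 v_\xi]}(a,\sigma)(\beta_2 v_\xi(\sigma)\phi_*(\sigma))\,\rd\sigma$, the latter being strictly positive for $a>0$ because $v_\xi(\sigma)\phi_*(\sigma)\ge 0$ and $b(a)>0$ near $a_m$; and $\Psi_0$ itself is nonnegative since it is $\xi(1-\xi H_{[2\beta_1 v_\xi]})^{-1}$ applied to a nonnegative element, and $(1-\xi H_{[2\beta_1 v_\xi]})^{-1}=\sum_k(\xi H_{[2\beta_1 v_\xi]})^k$ is a positive operator owing to $r(\xi H_{[2\beta_1 v_\xi]})<1$ and the positivity of $H_{[2\beta_1 v_\xi]}$.

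First I would record that $\phi_*\in\Wqd$ lies in the ``interior'' in the following pointwise sense: from \eqref{emb} and the strong positivity of the evolution operator, $\phi_*(a,x)>0$ for all $a\in J$ and $x\in\Om$ (using that $\Phi_0$ is in the interior of the cone at $a=0$ and strong positivity propagates this forward), and moreover $\phi_*$ is bounded away from zero on compact subsets of $J\times\Om$ in the appropriate $C^1(\bar\Om)$-sense. The same argument, via the two-term representation, gives $\psi_*(a,x)\ge 0$ for all $(a,x)$ with strict positivity for $a>0$. Then, since $\theta_1(s),\theta_2(s)\to 0$ in $\Wq\hookrightarrow C(J,\Wqb^{2-2/q})\hookrightarrow C(J,C^1(\bar\Om))$ as $s\to 0$, for $s>0$ small the perturbations $s\theta_1(s)$ and $s\theta_2(s)$ are of strictly smaller order than $s\phi_*$ and $s\psi_*$ respectively, in the relevant pointwise/$C^1$ norms. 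I would make this quantitative by comparing $\|\theta_i(s)\|_{C(J,C^1(\bar\Om))}\to 0$ against a lower bound on $\phi_*$ (and on $\psi_*$, handling the degeneracy at $a=0$ separately by using the nonlocal initial conditions \eqref{3ab}--\eqref{4ab}: at $a=0$ the $u$-component equals $\eta(s)$ times its $b$-weighted age-average, hence is controlled by positivity for $a>0$, and similarly for $v$).

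The concrete conclusion I would draw: for $s\in(0,\ve)$ with $\ve$ possibly shrunk, $s\phi_*+s\theta_1(s)\in\Wq^+$ and is nonzero (it is strictly positive for $a$ away from $0$), hence lies in $\Wqd$; and $v_\xi+s\psi_*+s\theta_2(s)\in\Wqd$ because $v_\xi\in\Wqd$ is itself strongly positive (for $a>0$) by Theorem~\ref{A1} and Lemma~\ref{A2}, so adding the $O(s)$ term $s\psi_*+s\theta_2(s)$, which for small $s$ cannot overcome the positivity of $v_\xi$ in the $C^1$-sense on the region $a\ge a_0$ while at $a=0$ the nonlocal condition and continuity preserve positivity, keeps the sum in the positive cone and nonzero. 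Therefore $\Gamma_+\subset\R^+\times\Wqd\times\Wqd$.

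The main obstacle is the degeneracy of $\psi_*$ at $a=0$: unlike $\phi_*(0)=\Phi_0\in\mathrm{int}(\Wqqp)$, the leading term $\psi_*$ may vanish (or nearly vanish) near $a=0$, so one cannot simply say $v_\xi+s\psi_*+s\theta_2(s)>0$ there by a crude magnitude comparison. The resolution is that positivity of this component near $a=0$ is inherited not from $\psi_*$ but from $v_\xi$ itself (which is strictly positive for $a>0$ and whose value at $a=0$ is $\xi V_\xi>0$ by the nonlocal condition), together with the fact that the full solution satisfies the nonlocal initial condition $w(0)=\xi W$ forcing consistency; a short continuity/compactness argument on $J$, splitting into $a\le a_0$ and $a\ge a_0$, then closes the gap. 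A secondary, more routine point is verifying that the interior-cone statements transfer correctly through the embedding $\Wq\hookrightarrow C(J,C^1(\bar\Om))$ so that ``small in $\Wq$'' really does mean ``pointwise negligible'', which is exactly what \eqref{emb} with $q>n+2$ provides.
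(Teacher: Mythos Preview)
Your approach is correct but takes a different, more laborious route than the paper. You argue by direct pointwise comparison: you show that $\phi_*(a)\in\mathrm{int}(\Wqqp)$ uniformly in $a\in J$ (via strong positivity of $\Pi_{A_\xi}$) and that $v_\xi(a)\in\mathrm{int}(\Wqqp)$ uniformly in $a$, then use the embedding $\Wq\hookrightarrow C(J,\Wqb^{2-2/q})$ to absorb the $o(1)$ perturbations $\theta_j(s)$. This works, and your worry about ``degeneracy of $\psi_*$ at $a=0$'' is in fact a non-issue for two reasons: first, $\Psi_0$ is actually in $\mathrm{int}(\Wqqp)$ (your own Neumann-series argument plus the strong positivity of $H_{[2\beta_1 v_\xi]}$ gives this), and second, the $v$-component is dominated by $v_\xi$ anyway, not by $s\psi_*$.

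The paper's proof is shorter because it exploits that $(u^s,v^s)$ are genuine solutions of the nonlinear system \eqref{1a}--\eqref{6a}. It only checks positivity of the \emph{initial traces}: $u^s(0)=s\Phi_0+o(s)$ and $v^s(0)=\xi V_\xi+o(1)$ in $\Wqb^{2-2/q}$, both of which lie in $\mathrm{int}(\Wqqp)$ for small $s>0$ since $\Phi_0,V_\xi\in\mathrm{int}(\Wqqp)$. Positivity for all $a\in J$ then follows in one stroke from the parabolic maximum principle applied to each equation (written as $\partial_a u+\mathcal{A}(a)u=0$ with suitable lower-order coefficients). Your argument avoids invoking the maximum principle for the cross-diffusion equation, at the cost of tracking interior-cone membership along the whole interval $J$; the paper's argument trades that bookkeeping for a single appeal to \cite[Thm.~13.5]{DanersKochMedina}.
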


\begin{proof}
Let $u^s:=s\phi_*+s\theta_1(s)$ and $v^s:=v_\xi+s\psi_*+s\theta_2(s)$. Then $u^s(0)=s\Phi_0+o(s)$ and $v^s(0)=\xi V_\xi+s\Psi_0+o(s)$ in $\Wqb^{2-2/q}$ as $s\rightarrow0^+$ by \eqref{emb}. Thus, it follows from $\Phi_0, V_\xi\in \mathrm{int}(\Wqb^{2-2/q,+})$ that $u^s(0), v^s(0)\in \mathrm{int}(\Wqb^{2-2/q,+})$ for $s\in (0,\ve)$ with $\ve>0$ small enough, whence $u^s, v^s\in\Wqd$  for $s\in (0,\ve)$ due to the parabolic maximum principle \cite[Thm.13.5]{DanersKochMedina} and \eqref{7}-\eqref{6a}. 
\end{proof}

\noindent Now, invoking Corollary~\ref{C1}, Remark~\ref{norm}, and Corollary~\ref{AA1} we obtain from \cite[Thm.4.4]{ShiWang} (see also \cite[Rem.4.2.1]{ShiWang}) further information about the global character of the continuum. More precisely, if $\mathfrak{C}_+$ denotes the connected component of $\mathfrak{C}_*\setminus\Gamma_-$ containing $\Gamma_+$, then $\mathfrak{C}_+$ satisfies the alternatives:
\begin{itemize}
\item[(i)] $\mathfrak{C}_+$ intersects with the boundary of $\R\times\Wq\times\Wqh$, or
\item[(ii)] $\mathfrak{C}_+$ is unbounded in $\R\times\Wq\times\Wqh$, or
\item[(iii)] $\mathfrak{C}_+$ contains a point $(\eta,0,v_\xi)$ with $\eta\not= \eta_0$, or
\item[(iv)] $\mathfrak{C}_+$ contains a point $(\eta,u,v_\xi+w)$ with $(u,w)\not=(0,0)$ and $(u,w)\in \im(F_{(u,w)}(\eta_0,0,0))$.
\end{itemize}

Due to Lemma~\ref{pos}, the continuum $\mathfrak{C}:=\mathfrak{C}_+\cap (\R^+\times\Wq^+\times\Wq^+)$ of solutions to \eqref{7}-\eqref{6a} contains the curve~$\Gamma_+$. Furthermore, we have:

\begin{lem}\label{C}
$\mathfrak{C}\setminus\{(\eta_0,0,v_\xi)\}\subset\R^+\times\Wqd\times\Wqd$ is unbounded.
\end{lem}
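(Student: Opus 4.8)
The plan is to rule out alternatives (i), (iii), and (iv) from the four-way dichotomy, leaving only alternative (ii): unboundedness of $\mathfrak{C}_+$, which then transfers to $\mathfrak{C}$ since $\mathfrak{C}_+$ consists of positive solutions near $\Gamma_+$ and, by a connectedness/maximum-principle argument, cannot leave the positive cone without passing through a semi-trivial point. First I would dispose of alternative (iv): a point $(\eta,u,v_\xi+w)$ with $(u,w)\ne(0,0)$ lying in $\im(F_{(u,w)}(\eta_0,0,0))$ would, combined with the direct sum decomposition $\X_1=\mathrm{span}\{(\phi_*,\psi_*)\}\oplus\im(F_{(u,w)}(\eta_0,0,0))$ and Lemma~\ref{A33}, contradict the local structure of the solution set near $(\eta_0,0,v_\xi)$ provided by \cite[Thm.4.3]{ShiWang}; more precisely, along $\mathfrak{C}_+$ the component in the $(\phi_*,\psi_*)$ direction is $s\ne0$, so the solution cannot have its $(u,w)$-part purely in the range. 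Then alternative (iii): a point $(\eta,0,v_\xi)$ with $\eta\ne\eta_0$ in $\mathfrak{C}_+$ would be a second bifurcation point from $\mathfrak{B}_2$; one shows this is impossible because bifurcation of positive solutions from $(\eta,0,v_\xi)$ forces $\eta\, r(G_\xi)=1$ by the Krein--Rutman characterization (the eigenvalue equation $1-\eta G_\xi$ must be singular with a positive eigenfunction), and $\eta_0$ is by \eqref{eta0} the unique such value. This also gives the ``no other bifurcation point'' clause in the theorem statement.

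Next I would eliminate alternative (i), that $\mathfrak{C}_+$ touches the boundary of $\R\times\Wq\times\Wqh$. The boundary of $\Wqh$ consists of $w$ with $\inf_{J\times\bar\Om}w=-1/2\gamma$, i.e.\ $v=v_\xi+w$ attains the value $0$ somewhere; but any coexistence solution $(\eta,u,v)$ on $\mathfrak{C}$ has $v\in\Wqd$, and by the parabolic maximum principle \cite[Thm.13.5]{DanersKochMedina} applied to \eqref{2a}, $v$ is strictly positive on $(0,a_m)\times\bar\Om$ once $v(0,\cdot)=\xi V\not\equiv 0$; the nonlocal initial condition together with the positivity of the evolution operator keeps $V>0$. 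Hence solutions on $\mathfrak{C}$ stay strictly inside $\Wqh$, so approaching $\partial\Wqh$ would force $v\to 0$, i.e.\ $V\to 0$, i.e.\ (by the evolution-operator representation) $v\equiv 0$, which lands us on $\mathfrak{B}_1$ — a semi-trivial point, not in $\mathfrak{C}_+\setminus\Gamma_-$ by construction unless it is $(\eta_0,0,v_\xi)$ itself, contradicting the positivity. Similarly the $u$-component stays away from the trivial cone except possibly near the bifurcation point. I must also check that $\mathfrak{C}_+$ does not exit through the boundary $\eta=0$ in a degenerate way; but $\eta=0$ with $u\in\Wqd$ contradicts \eqref{3a} and Theorem~\ref{A1}-type uniqueness, so this is a genuine part of the interior argument rather than an escape route.

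Having excluded (i), (iii), (iv), alternative (ii) holds: $\mathfrak{C}_+$, and hence $\mathfrak{C}=\mathfrak{C}_+\cap(\R^+\times\Wq^+\times\Wq^+)$, is unbounded, and by the maximum-principle argument above every point of $\mathfrak{C}$ other than $(\eta_0,0,v_\xi)$ lies in $\R^+\times\Wqd\times\Wqd$, which is exactly the assertion of Lemma~\ref{C}. The main obstacle I anticipate is the transition from ``$\mathfrak{C}_+$ unbounded'' to ``$\mathfrak{C}$ unbounded'': one must argue that $\mathfrak{C}_+$ cannot re-enter the semi-trivial branches after leaving $\Gamma_+$ — i.e.\ that the only semi-trivial point in the closure of $\mathfrak{C}_+$ is $(\eta_0,0,v_\xi)$ — so that $\mathfrak{C}_+$ never meets the region where the positivity breaks down, forcing all of it (not just a bounded piece) to lie in $\R^+\times\Wqd\times\Wqd$. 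This requires combining the uniqueness of the bifurcation point (excluding (iii)), the closedness and connectedness of $\mathfrak{C}_+$, and a careful use of the strong maximum principle to show the positive cone is, in effect, relatively open-and-closed along the continuum away from semi-trivial states; the cross-diffusion term must be handled by noting it does not affect the scalar equation \eqref{2a} for $v$, so the maximum principle for $v$ goes through unchanged, while positivity of $u$ follows from \eqref{1a} written via the evolution operator $\Pi_{[\,\cdot\,]}$ as in Subsection~\ref{sec 2.1}.
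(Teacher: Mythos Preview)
Your argument for alternative (iv) has a genuine gap. You write that a point $(\eta,u,v_\xi+w)$ with $(u,w)\in\im(F_{(u,w)}(\eta_0,0,0))$ would contradict ``the local structure of the solution set near $(\eta_0,0,v_\xi)$'', because along $\Gamma_+$ the $(\phi_*,\psi_*)$-component is $s\ne 0$. But alternative (iv) is a \emph{global} statement: the point in question may lie anywhere on $\mathfrak{C}_+$, far from the bifurcation point, where the local parametrization says nothing. The paper's argument is entirely different and essentially relies on positivity: assuming $(u,w)=F_{(u,w)}(\eta_0,0,0)[\phi,\psi]$, one extracts from the first component the relation $\phi-u=T_1(0,\eta_0\Phi)$, shifts by $\kappa\phi_*$ so that $p:=\kappa\phi_*+\phi-u$ has $p(0)\in\mathrm{int}(\Wqqp)$, and obtains $(1-\eta_0 G_\xi)p(0)=\eta_0 U$. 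Since $u\in\Wqd$ gives $U\in\Wqqp\setminus\{0\}$, this equation has no positive solution by the Krein--Rutman theory \cite[Thm.3.2]{AmannSIAMReview}, a contradiction. Positivity of $u$ is the crux, not local curve structure.

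Your treatment of (i) and of positivity along the continuum is also incomplete. First, the boundary of $\Wqh$ is where $w$ attains $-1/(2\gamma)$, not where $v=v_\xi+w$ attains $0$; these are different conditions. More importantly, the maximum principle alone does not rule out that along a sequence $(\eta_j,u_j,v_j)\in\mathfrak{C}\cap(\R^+\times\Wqd\times\Wqd)$ one has $v_j\to 0$ in $\Wq$ (the strong maximum principle keeps each $v_j$ strictly positive but says nothing about the limit). The paper handles this by a normalization-and-compactness argument: set $\psi_j:=v_j/\|v_j\|_{\Wq}$ (or $\phi_j:=u_j/\|u_j\|_{\Wq}$), use the regularity of Lemma~\ref{A44} to extract a convergent subsequence, and pass to the limit in the equation to obtain a nontrivial $\psi\in\Wqd$ satisfying a \emph{linear} equation whose spectral condition (via Lemma~\ref{A2} and \eqref{1mio} or \eqref{eta0}) yields a contradiction in each of the cases $u\equiv v\equiv 0$, $u\equiv 0\not\equiv v$, $v\equiv 0\not\equiv u$. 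This is what proves $\mathfrak{C}_+$ never leaves $\R^+\times\Wqd\times\Wqd$ except at $(\eta_0,0,v_\xi)$, giving $\mathfrak{C}=\mathfrak{C}_+$ and eliminating (i) and (iii) simultaneously. Your sketch does not contain this mechanism.
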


\begin{proof}
We first show that $\mathfrak{C}_+$ does not reach the boundary of $\R^+\times\Wqd\times\Wqd$ at some point $(\eta,u,v)\not= (\eta_0,0,v_\xi)$. Suppose otherwise, i.e. let there be a sequence $(\eta_j,u_j,v_j)$ in $\mathfrak{C}\cap (\R^+\times\Wqd\times\Wqd)$ converging toward some point $(\eta,u,v)\not= (\eta_0,0,v_\xi)$ not belonging to $\R^+\times\Wqd\times\Wqd$. Since $u_j$ and $v_j$ are nonnegative, the limits $u$ and $v$ are as well. So $u\equiv0$ or $v\equiv 0$ because $(\eta,u,v)\notin\R^+\times\Wqd\times\Wqd$. We claim that neither is possible. Suppose first that both $u$ and $v$ identically vanish. As $v_j\in \Wqd$, $\psi_j:=v_j/\|v_j\|_{\Wq}$ is well-defined in $\Wqd$, has norm 1, and 
$$
\partial_a \psi_j-\Delta_D \psi_j=-\beta_1 v_j\psi_j+\beta_2 \psi_ju_j\ ,\quad \psi_j(0)=\xi \Psi_j\ .
$$
The proof of Lemma~\ref{A44} shows that $(\psi_j)$ is bounded in $C^\ve(J,C^{2+\ve}(\bar{\Om}))\cap C^{1+\ve}(J,C^{\ve}(\bar{\Om}))$
for some $\ve>0$ and so we may assume without loss of generality that $(\psi_j)$ converges in $\Wqd$ to some $\psi$ satisfying
$$
\partial_a \psi-\Delta_D\psi =0\ ,\quad \psi(0)=\xi \Psi\ .
$$
Thus $\psi(a)=e^{a\Delta_D}\psi(0)$, $a\in J$, and $\psi(0)=\xi H_{[0]} \psi(0)$ implying  $\xi r(H_{[0]})=1$ by the Krein-Rutman theorem. However, this contradicts \eqref{1mio} and $\xi>1$. Next, assume $u$ vanishes identically but $v\not\equiv 0$. Then $(\eta,u,v)=(\eta,0,v)$ and the uniqueness statement of Theorem~\ref{A1} implies $v=v_\xi$. Thus, $( \eta,0,v_\xi)\in\mathfrak{B}_2$ is a bifurcation point to positive coexistence states. By Lemma~\ref{A44}, we may assume $(v_j)$ converges to $v_\xi$ in $C^\ve(J,C^{2+\ve}(\bar{\Om}))\cap C^{1+\ve}(J,C^{\ve}(\bar{\Om}))$
for some $\ve>0$. Moreover, as above we may assume that $(\phi_j)$, defined by $\phi_j:=u_j/\|u_j\|_{\Wq}$, converges in $\Wqd$ to some $\phi$ satisfying
\bqn\label{Z0}
\partial_a \phi-\Delta_D\big((1+\gamma v_\xi)\phi\big) =-\alpha_2 \phi v_\xi\ ,\quad \phi(0)=\eta \Phi\ .
\eqn
Therefore, $\phi(a)=\Pi_{A_\xi}(a,0)\phi(0)$, $a\in J$, and $\phi(0)=\eta G_\xi \phi(0)$. Thus $\eta=\eta_0$ by the Krein-Rutman theorem and \eqref{eta0}. This yields the contradiction $(\eta,u,v)=(\eta_0,0,v_\xi)$. 
Finally, suppose $v\equiv 0$ but $u\not\equiv 0$. Then we have
$(\eta,u,v)=(\eta,u,0)$ what gives $u=u_\eta$ with $\eta>1$ by Theorem~\ref{A1} since $u\in\Wqd$, and so
\mbox{$(\eta,u,v)=(\eta,u_\eta,0)\in\mathfrak{B}_1$} is a bifurcation point to positive coexistence states. As above we may assume that $(\psi_j)$, given by $\psi_j:=v_j/\|v_j\|_{\Wq}$, converges to some $\psi\in\Wqd$ satisfying
$$
\partial_a \psi-\Delta_D \psi=\beta_2 \psi u_\eta\ ,\quad \psi(0)=\xi \Psi\ .
$$
This readily implies $1=\xi r(H_{[-\beta_2 u_\eta]})$ what is impossible since $\xi>1$ and $1=r(H_{[0]})<r(H_{[-\beta_2 u_\eta]})$ according to \eqref{1mio} and Lemma~\ref{A2}. 

Consequently, $\mathfrak{C}_+$ intersects with the boundary of $\R^+\times\Wqd\times\Wqd$ only at $(\eta_0,0,v_\xi)$, whence $\mathfrak{C}=\mathfrak{C}_+$. So neither alternative (i) nor (iii) above is possible. Suppose (iv) occurs, and let $(\phi,\psi)\in\X_1$ and $(\eta,u,v_\xi+w)\in \mathfrak{C}_+$ be with $$(u,w)=F_{(u,w)}(\eta_0,0,0)[\phi,\psi]\ .$$ Then $\phi-u=T_1(0,\eta_0\Phi)$ by \eqref{Z11}. Recall, from the definition of $\phi_*$ and Lemma~\ref{A33}, that $\phi_*=T_1(0,\eta_0\Phi_*)$ with $\Phi_*\in \mathrm{int}(\Wqb^{2-2/q,+})$. The latter implies $\kappa\eta_0\Phi_*+\phi(0)-u(0)\in \mathrm{int}(\Wqb^{2-2/q,+})$ for some $\kappa>0$. Defining \mbox{$p:=\kappa\phi_*+\phi-u\in\Wq$}, we obtain $p=T_1(0,\eta_0(\kappa\Phi_*+\Phi))$, that is,
$
\partial_a p +A_\xi p=0$ with $p(0)=\eta_0 P+\eta_0 U
$.
Hence $(1-\eta_0G_\xi)p(0)=\eta_0 U$. Since $u\in \Wqd$ by assumption and thus $U\in (\Wqb^{2-2/q,+})\setminus\{0\}$, this last equation does not admit a positive solution $p(0)$ according to \eqref{eta0} and \cite[Thm.3.2]{AmannSIAMReview} in contradiction to $p(0)\in\mathrm{int}(\Wqb^{2-2/q,+})$ by the choice of $\kappa$. So (iv) is impossible as well, and we conclude that  $\mathfrak{C}\setminus\{(\eta_0,0,v_\xi)\}\subset\R^+\times\Wqd\times\Wqd$ is unbounded.
\end{proof}

To finish off the proof of Theorem~\ref{T1} we merely have to remark that $(\eta_0,0,v_\xi)$ is the only bifurcation point.

\begin{lem}
There is no other bifurcation point on $\mathfrak{B}_2$ or on $\mathfrak{B}_1$ to positive coexistence states.
\end{lem}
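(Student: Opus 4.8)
The plan is to argue that at any other potential bifurcation point the relevant linearization is invertible, so that the implicit function theorem forces the solution set to coincide locally with the semi-trivial branch. Concretely, a bifurcation point to positive coexistence states on $\mathfrak{B}_2$ would be a point $(\tilde\eta,0,v_\xi)$ with $\tilde\eta\neq\eta_0$ which is the limit of a sequence $(\eta_j,u_j,v_j)\in\R^+\times\Wqd\times\Wqd$ of coexistence states with $u_j\to0$, $v_j\to v_\xi$ in $\Wq$. Normalizing $\phi_j:=u_j/\|u_j\|_{\Wq}$ as in the proof of Lemma~\ref{C} and invoking the a-priori bounds from Lemma~\ref{A44}, we may pass to the limit and obtain $\phi\in\Wqd$ with $\partial_a\phi-\Delta_D((1+\gamma v_\xi)\phi)=-\alpha_2\phi v_\xi$, $\phi(0)=\tilde\eta\Phi$, i.e. $\phi(a)=\Pi_{A_\xi}(a,0)\phi(0)$ and $\phi(0)=\tilde\eta G_\xi\phi(0)$. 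Since $\phi(0)\in(\Wqqp)\setminus\{0\}$, the Krein--Rutman theorem applied to $G_\xi$ (as in \eqref{eta0}) forces $\tilde\eta r(G_\xi)=1$, hence $\tilde\eta=\eta_0$, a contradiction. Thus there is no bifurcation point on $\mathfrak{B}_2$ other than $(\eta_0,0,v_\xi)$.

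Next I would treat $\mathfrak{B}_1$. A bifurcation point to coexistence states on $\mathfrak{B}_1$ is a point $(\tilde\eta,u_{\tilde\eta},0)$ with $\tilde\eta>1$, approached by coexistence states $(\eta_j,u_j,v_j)$; by Theorem~\ref{A1} and Lemma~\ref{A44} we may assume $u_j\to u_{\tilde\eta}$ while, setting $\psi_j:=v_j/\|v_j\|_{\Wq}$ and using the bounds of Lemma~\ref{A44}, $\psi_j\to\psi\in\Wqd$ with $\partial_a\psi-\Delta_D\psi=\beta_2\psi u_{\tilde\eta}$, $\psi(0)=\xi\Psi$. This means $\psi(0)=\xi H_{[-\beta_2 u_{\tilde\eta}]}\psi(0)$ with $\psi(0)\in(\Wqqp)\setminus\{0\}$, hence $\xi\, r(H_{[-\beta_2 u_{\tilde\eta}]})=1$ by the Krein--Rutman theorem. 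But $-\beta_2 u_{\tilde\eta}\le 0$ and $-\beta_2 u_{\tilde\eta}\not\equiv 0$, so Lemma~\ref{A2} together with \eqref{1mio} gives $r(H_{[-\beta_2 u_{\tilde\eta}]})>r(H_{[0]})=1$, whence $\xi<1$; this contradicts the standing hypothesis $\xi>1$ of Theorem~\ref{T1}. Therefore no point of $\mathfrak{B}_1$ is a bifurcation point to positive coexistence states either.

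The bulk of the work is really the compactness/limit-passage step, but this has already been carried out in the proof of Lemma~\ref{C}: the normalized sequences $\phi_j$, $\psi_j$ are bounded in $C^\ve(J,C^{2+\ve}(\bar\Om))\cap C^{1+\ve}(J,C^\ve(\bar\Om))$ by the bootstrap of Lemma~\ref{A44} and hence relatively compact in $\Wq$, and the limiting equations are obtained by continuity of the relevant (cross-)diffusion and reaction terms. So the only genuinely new ingredient is the spectral identification $\tilde\eta r(G_\xi)=1$ resp. $\xi\, r(H_{[-\beta_2 u_{\tilde\eta}]})=1$ via Krein--Rutman, each of which pins down the parameter and contradicts either the simplicity of the eigenvalue at $\eta_0$ or the normalization \eqref{6n} combined with $\xi>1$. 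The main obstacle, if any, is purely bookkeeping: making sure the normalized limit lands in $\Wqd$ (not just $\Wq^+$), which again follows from the strong positivity in \eqref{ssss} and the parabolic maximum principle \cite[Thm.13.5]{DanersKochMedina}. In fact the argument is essentially a repetition of cases already handled inside the proof of Lemma~\ref{C}, which is why the statement can be dispatched very briefly.
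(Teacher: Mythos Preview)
Your proposal is correct and follows essentially the same route as the paper: the paper's proof simply says ``exactly the same arguments as in the first step of the proof of Lemma~\ref{C}'' suffice, and what you spell out---normalize the vanishing component, use the compactness from (the proof of) Lemma~\ref{A44} to pass to the limit, and identify the limiting linear equation with an eigenvalue problem for $G_\xi$ respectively $H_{[-\beta_2 u_{\tilde\eta}]}$, then invoke Krein--Rutman and \eqref{eta0}/\eqref{1mio}---is precisely that first step. The only quibble is your opening sentence: you announce an implicit-function-theorem argument but never use one; what you actually carry out is the direct sequence/eigenvalue argument, which is also what the paper does.
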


\begin{proof}
Exactly the same arguments as in the first step of the proof of Lemma~\ref{C} show that there is neither a bifurcation point $(\eta,u_\eta,0)\in\mathfrak{B}_1$ nor  $(\eta,0,v_\xi)\in\mathfrak{B}_2$ to positive coexistence states.
\end{proof}

\section{Proof of Theorem~\ref{T22}}\label{sec 3}

\noindent As the proof of Theorem~\ref{T22} is similar to the one of Theorem~\ref{T1}, we merely sketch it and point out the necessary modifications. 

We shall derive a bifurcation from the branch $\mathfrak{B}_1$ by linearizing around a point $(\eta,u_\eta,0)$ with a suitable $\eta=\eta_1$ to be determined. First note that the smooth branch $\mathfrak{U}:=\{( \eta,u_\eta); \eta>1\}$ in $(1,\infty)\times\Wqd$ of solutions to \eqref{Aee1} provided by Theorem~\ref{A1} extends to a smooth branch $\mathfrak{U}_*:=\{( \eta,u_\eta); \eta>\eta_*\}$ in $(\eta_*,\infty)\times\Wq$ passing through $(\eta,u)=(1,0)$, where $\eta_*\in (0,1)$ and $-u_\eta\in\Wqd$ for $\eta\in (\eta_*,1)$. Indeed, application of \cite[Thm.2.4]{WalkerSIMA}, \cite[Prop.2.5]{WalkerJDE} (with $\ve<0$ in \cite[Eq.(2.17)]{WalkerJDE}, see the proof of \cite[Prop.3.4]{WalkerJRAM}) shows that the branch $\mathfrak{U}$ of positive solutions extends smoothly with a branch $\{(\eta(\ve),u_{\eta(\ve)});-\ve_0<\ve\le 0\}$, where $-u_{\eta(\ve)}\in\Wqd$ for $\ve\in(-\ve_0,0)$. Thus, fixing $\ve\in(-\ve_0,0)$, it follows that $w:=-u_{\eta(\ve)}\in\Wqd$ satisfies
$$
\partial_a w-\Delta_Dw=\alpha_1 w^2\ ,\quad w(0)=\eta(\ve) W\ ,
$$
whence $w(0)=\eta(\ve) H_{[- \alpha_1w]} w(0)$ and thus $\eta(\ve) r(H_{[- \alpha_1w]}) =1$ by the Krein-Rutman theorem. Due to Lemma~\ref{A2} and \eqref{1mio}, we have $r(H_{[- \alpha_1w]}) >r(H_{[0]})=1$ and so $\eta(\ve)<1$. 
We thus get the desired smooth extension $\mathfrak{U}_*$ of $\mathfrak{U}$ by choosing $\eta_*$ sufficiently close to $1$.
Consequently, the solutions $(\eta,u,v)=(\eta,u_\eta-w,v)$ to problem \eqref{7}-\eqref{6a} can be obtained as the zeros $(\eta,w,v)$ of the smooth map $\mathcal{F}:(\eta_*,\infty)\times\Wq\times\Wqh\rightarrow \Wq\times\Wq$, defined by
\bqn\label{G}
\mathcal{F}(\eta,w,v):=\left(\begin{matrix} w-T\big(-\Delta_D(\gamma v(u_\eta-w))-2\alpha_1u_\eta w+\alpha_1w^2+ \alpha_2 (u_\eta-w)v\, , \,\eta W\big)\\ v-T\big(-\beta_1 v^2+\beta_2 v(u_\eta-w)\, ,\, \xi V\big)\end{matrix}\right)\ ,
\eqn
where the set $\Wqh$ is as in Section~\ref{sec 2} and
$$
T:=(\partial_a-\Delta_D,\gamma_0)^{-1}\in\mathcal{L}(\Lq\times \Wqb^{2-2/q},\Wq)\ .
$$
Clearly, $\mathcal{F}(\eta,0,0)=0$ for $\eta\in (\eta_*,\infty)$ and the Frech\'et derivatives at $(\eta,w,v)$ are given by
\bqn\label{Z1a}
\mathcal{F}_{(w,v)}(\eta,w,v)[\phi,\psi]=\left(\begin{matrix} \phi-T\big( -\Delta_D(\gamma \psi (u_\eta-w)) +\Delta_D(\gamma v\phi)-2\alpha_1 (u_\eta-w)\phi\\ \qquad\qquad\quad
+\alpha_2 \psi(u_\eta-w)-\alpha_2 v\phi \, ,\, \eta \Phi\big)\\ \psi-T\big( -2\beta_1 v\psi-\beta_2 v  \phi+\beta_2 (u_\eta-w)\psi \, ,\, \xi \Psi\big)
\end{matrix}\right)
\eqn
and
\bqn\label{Z2a}
\mathcal{F}_{\eta,(w,v)}(\eta,u,w)[\phi,\psi]=\left(\begin{matrix} -T\big( -\Delta_D(\gamma \psi u_\eta') -2\alpha_1 u_\eta'\phi+\alpha_2 \psi u_\eta' \, ,\, \Phi\big)\\ -T\big( \beta_2 u_\eta' \psi\, ,\, 0\big)
\end{matrix}\right)
\eqn
for $(\phi,\psi)\in \Wq\times\Wq$ with dashes referring to derivatives with respect to $\eta$. It is then straightforward to modify the proofs of Lemma~\ref{L34} and Proposition~\ref{pA1} in order to derive the analogue of Corollary~\ref{AA1}:

\begin{lem}\label{AAA1}
For $k\in (0,1)$ and $(\eta,w,v)\in (\eta_*,\infty)\times\Wq\times\Wqh$,
$$
(1-k)\mathcal{F}_{(w,v)}(\eta,0,0)+k \mathcal{F}_{(w,v)}(\eta,w,v)\in\mathcal{L}(\X_1)
$$ 
is a Fredholm operator of index zero. 
\end{lem}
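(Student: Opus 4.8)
The plan is to mimic exactly the structure that carried Corollary~\ref{AA1}, replacing the operator $F$ and its data $(A_1,B_1,v_\xi)$ by the operator $\mathcal{F}$ and its data coming from \eqref{Z1a}. First I would read off from \eqref{Z1a} the second-order coefficients of the two diagonal entries of $\mathcal{F}_{(w,v)}(\eta,w,v)$: the $(1,1)$-entry carries the principal part $-\Delta_D\big((1+\gamma v)\,\cdot\,\big)$ and the $(2,2)$-entry carries $-\Delta_D$, both acting on $\Wqb^2$. Since $w\in\Wqh$ means $1+\gamma(v_\xi+w)\ge 1/2$ is irrelevant here — what matters is $1+\gamma v\ge 1/2$ for $v\in\Wqh$, together with the positivity and regularity $v\in\Wq\hookrightarrow C(J,\Wqb^{2-2/q})$ from \eqref{emb} — exactly as in \eqref{T11} each diagonal operator $a\mapsto A_{11}(a)$, $a\mapsto A_{22}(a)$ has maximal $L_q$-regularity for every fixed $a\in J$ by the divergence-form argument and \cite[I.Cor.1.3.2, III.Ex.4.7.3, III.Thm.4.10.7]{LQPP}, and both belong to $C(J,\mathcal{L}(\Wqb^2,L_q))$ by \eqref{emb}. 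This gives the analogue of Lemma~\ref{L34}: writing $\mathbb{A}=\mathbb{A}_1+\mathbb{A}_2$ with $\mathbb{A}_1$ upper triangular in $C(J,\mathcal{L}(X_1,X_0))$ and $\mathbb{A}_2$ the off-diagonal plus lower-order remainder in $L_q(J,\mathcal{L}(X_{1-1/q},X_0))$ — here the zeroth-order $u_\eta$-terms sit in $L_q(J,\mathcal{L}(X_{1-1/q},X_0))$ since $u_\eta\in\Wq$ — one gets $(\partial_a+\mathbb{A},\gamma_0)\in\mathcal{L}is(\X_1,\X_0\times X_{1-1/q})$ via Lemma~\ref{Jan}, and $\ell[\eta]\in\mathcal{K}(\X_1,X_{1-1/q})$ by the compact embedding $\Wqb^2\hookrightarrow\Wqb^{2-2/q}$.

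Next I would repeat the proof of Proposition~\ref{pA1} verbatim with these new $\mathbb{A}$, $\mathbb{D}$, $\ell[\eta]$: set $\Sigma:=(\partial_a+\mathbb{A},\gamma_0)^{-1}$, $Q_0:=[w\mapsto\ell[\eta](\Sigma(0,w))]\in\mathcal{K}(X_{1-1/q})$, rewrite $\mathcal{F}_{(w,v)}(\eta,w,v)z=h$ as the pair \eqref{17f}--\eqref{18f}, and conclude from Fredholm theory for $1-Q_0$ that $\mathcal{F}_{(w,v)}(\eta,w,v)$ is Fredholm of index zero. Then, as in the proof of Corollary~\ref{AA1}, I would compute $\mathcal{F}_{(w,v)}(\eta,0,0)$ from \eqref{Z1a} by setting $w=0$, $v=0$ — the principal part of the $(1,1)$-entry becomes $-\Delta_D$, the $(2,2)$-entry stays $-\Delta_D$, the off-diagonal terms become the $u_\eta$-multiplications — and observe that the convex combination $(1-k)\mathcal{F}_{(w,v)}(\eta,0,0)+k\,\mathcal{F}_{(w,v)}(\eta,w,v)$ has principal part $-\Delta_D\big((1+k\gamma v)\,\cdot\,\big)$ in the $(1,1)$-slot and $-\Delta_D$ in the $(2,2)$-slot, with everything else of lower order; since $1+k\gamma v\ge 1/2$ for $v\in\Wqh$ and $k\in(0,1)$, this operator has exactly the same structure as $\mathcal{F}_{(w,v)}(\eta,w,v)$ itself, so the Fredholm-index-zero conclusion applies to it as well.

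The one genuinely new bookkeeping point — and the step I'd flag as the main obstacle, though it is minor — is that $\mathcal{F}$ is only defined on $(\eta_*,\infty)\times\Wq\times\Wqh$ and its coefficients now involve $u_\eta$ and $u_\eta'$, which are elements of $\Wq$ rather than of the a priori smoother class used for $v_\xi$; one must check that all the $u_\eta$- and $u_\eta'$-dependent terms in \eqref{Z1a} indeed land in $L_q(J,\mathcal{L}(X_{1-1/q},X_0))$ (for the second-order $-\Delta_D(\gamma\psi(u_\eta-w))$ piece, rewrite it in divergence form $-\mathrm{div}_x(\gamma(u_\eta-w)\nabla_x\psi)-\mathrm{div}_x(\psi\gamma\nabla_x(u_\eta-w))$ exactly as in the proof of Lemma~\ref{L34}, using $u_\eta,w\in\Wq\hookrightarrow C(J,C^1(\bar\Om))$ from \eqref{emb}). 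Once that is in place the argument is a line-by-line transcription, so I would simply state Lemma~\ref{AAA1} with the remark that the proof is obtained by modifying the proofs of Lemma~\ref{L34}, Proposition~\ref{pA1}, and Corollary~\ref{AA1} in the indicated manner.
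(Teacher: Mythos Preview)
Your proposal is correct and follows exactly the paper's approach: the paper simply states that it is ``straightforward to modify the proofs of Lemma~\ref{L34} and Proposition~\ref{pA1} in order to derive the analogue of Corollary~\ref{AA1}'', and you have spelled out precisely those modifications, including the key observation that $1+k\gamma v>1/2$ for $v\in\Wqh$ and $k\in(0,1)$ so that the convex combination retains the same structure.
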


To determine the bifurcation point, let us observe that $r(H_{[-\beta_2 u_\eta]})>1$ is a strictly increasing function of $\eta$ on $(1,\infty)$ according to Theorem~\ref{A1} and Lemma~\ref{A2}.
Since $u_\eta$ depends continuously on $\eta$ in the topology of $\Wq$ by Theorem~\ref{A1}, we obtain from \cite[II.Lem.5.1.4]{LQPP} that the evolution operator $\Pi_{[-\beta_2 u_\eta]}(a,0)$ and hence $H_{[-\beta_2 u_\eta]}$ depend continuously on $\eta$ with respect to the corresponding operator topologies. Together with the fact that the spectral radius considered as a function $\mk(\Wqq)\rightarrow \R^+$ is continuous (see \cite[Thm.2.1]{Degla09}), we conclude that
\bqn\label{46}
\big(\eta\mapsto r(H_{[-\beta_2 u_\eta]})\big)\in C\big((1,\infty),(1,\infty)\big)\ \,\text{is strictly increasing}\ 
\eqn
with $\lim_{\eta\rightarrow 1}r(H_{[-\beta_2u_\eta]})=1$. Defining $\delta\in [0,1)$ by
\bqn\label{delta}
\delta:=\frac{1}{\lim\limits_{\eta\rightarrow \infty}r(H_{[-\beta_2 u_\eta]})}\ ,
\eqn
it follows that for any $\xi\in (\delta,1)$ fixed we find a unique $\eta_1:=\eta_1(\xi)>1$ with
\bqn\label{eta1}
\xi=\frac{1}{r(H_{[-\beta_2 u_{\eta_1}]})}\ .
\eqn
We may then choose $\Psi_1\in \mathrm{int}(\Wqqp)$ spanning $\mathrm{ker}\big(1-\xi H_{[-\beta_2 u_{\eta_1}]}\big)$. Define $(\phi_\star,\psi_\star)\in \Wq\times\Wqd$ by
\mbox{$\psi_\star:=\Pi_{[-\beta_2 u_{\eta_1}]}(\cdot,0)\,\Psi_1$}
and
$$
\phi_\star:=\Pi_{[2\alpha_1 u_{\eta_1}]}(\cdot,0)\Phi_1+ N\psi_\star\ ,\qquad \Phi_1:={\eta_1}\big(1-{\eta_1}H_{[2\alpha_1 u_{\eta_1}]}\big)^{-1} \left(\int_0^{a_m} b(a)(N\psi_\star)(a)\,\rd a\right)\  ,
$$
with
$$
(N\psi_\star)(a):=
\int_0^a \Pi_{[2\alpha_1 u_{\eta_1}]}(a,\sigma)\,\big(-\Delta_D(\gamma u_{\eta_1}(\sigma)\psi_\star(\sigma))+\alpha_2u_{\eta_1}(\sigma) \psi_\star(\sigma)\big)\,\rd \sigma\ , \quad a\in J\ ,
$$
where the invertibility of $1-{\eta_1}H_{[2\alpha_1 u_{\eta_1}]}$ is due to \eqref{sp}. 
The analogue of Lemma~\ref{A33} then reads:

\begin{lem}\label{A33a}
The kernel of $\mathcal{F}_{(w,v)}(\eta_1,0,0)$ is spanned by $(\phi_\star,\psi_\star)$ and $\mathcal{F}_{\eta,(w,v)}(\eta_1,0,0)[\phi_\star,\psi_\star]$ does not belong to the range of $\mathcal{F}_{(w,v)}(\eta_1,0,0)$. 
\end{lem}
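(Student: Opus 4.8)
The plan is to mirror the proof of Lemma~\ref{A33} almost verbatim, using the fact that $\mathcal{F}_{(w,v)}(\eta_1,0,0)$ has, by \eqref{Z1a} with $w=v=0$, a triangular structure. First I would note that $(\phi,\psi)\in\X_1$ lies in $\kk(\mathcal{F}_{(w,v)}(\eta_1,0,0))$ precisely when
\begin{align*}
\partial_a\psi-\Delta_D\psi &= \beta_2 u_{\eta_1}\psi\ ,\qquad \psi(0)=\xi\Psi\ ,\\
\partial_a\phi-\Delta_D\phi+2\alpha_1 u_{\eta_1}\phi &= -\Delta_D(\gamma u_{\eta_1}\psi)+\alpha_2 u_{\eta_1}\psi\ ,\qquad \phi(0)=\eta_1\Phi\ ,
\end{align*}
using the definition of $T$. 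The first equation forces $\psi(a)=\Pi_{[-\beta_2 u_{\eta_1}]}(a,0)\psi(0)$ and then $\psi(0)=\xi H_{[-\beta_2 u_{\eta_1}]}\psi(0)$, so $\psi(0)$ must lie in the one-dimensional kernel $\mathrm{span}\{\Psi_1\}$ by \eqref{eta1} and Lemma~\ref{A2}; this gives $\psi$ a multiple of $\psi_\star$. Plugging $\psi_\star$ into the second equation, the variation-of-constants formula together with the definition of $N\psi_\star$ and $\Phi_1$ yields exactly $\phi=\phi_\star$ up to a multiple of the homogeneous solution $\Pi_{[2\alpha_1 u_{\eta_1}]}(\cdot,0)$ applied to an element of $\kk(1-\eta_1 H_{[2\alpha_1 u_{\eta_1}]})$; but by \eqref{sp} we have $\eta_1 r(H_{[\alpha_1 u_{\eta_1}]})=1$, and since $2\alpha_1 u_{\eta_1}\ge \alpha_1 u_{\eta_1}$ with strict inequality, Lemma~\ref{A2} gives $\eta_1 r(H_{[2\alpha_1 u_{\eta_1}]})<1$, so $1-\eta_1 H_{[2\alpha_1 u_{\eta_1}]}$ is invertible and no such extra homogeneous term is available. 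Hence the kernel is exactly $\mathrm{span}\{(\phi_\star,\psi_\star)\}$.

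For the transversality statement I would argue by contradiction: suppose $\mathcal{F}_{\eta,(w,v)}(\eta_1,0,0)[\phi_\star,\psi_\star]\in\im(\mathcal{F}_{(w,v)}(\eta_1,0,0))$. Writing out the range condition via \eqref{Z1a} and \eqref{Z2a}, there must exist $(\phi,\psi)\in\Wq\times\Wq$ solving the inhomogeneous system whose right-hand sides are the components of $-\mathcal{F}_{\eta,(w,v)}(\eta_1,0,0)[\phi_\star,\psi_\star]$. Looking at the $\psi$-component alone, one obtains
$$
\partial_a\psi-\Delta_D\psi-\beta_2 u_{\eta_1}\psi = \beta_2 u_{\eta_1}'\,\psi_\star\ ,\qquad \psi(0)=\xi\Psi\ ,
$$
and the solvability of this equation forces $(1-\xi H_{[-\beta_2 u_{\eta_1}]})\psi(0) = \xi\int_0^{a_m}b(a)\int_0^a\Pi_{[-\beta_2 u_{\eta_1}]}(a,\sigma)\beta_2 u_{\eta_1}'(\sigma)\psi_\star(\sigma)\,\rd\sigma\,\rd a$. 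Since $\Psi_1\in\kk(1-\xi H_{[-\beta_2 u_{\eta_1}]})$ is a simple eigenvector in $\mathrm{int}(\Wqqp)$, the Fredholm alternative for the compact operator $\xi H_{[-\beta_2 u_{\eta_1}]}$ requires the right-hand side to lie in the range of $1-\xi H_{[-\beta_2 u_{\eta_1}]}$, i.e. to be annihilated by the positive eigenfunctional of the dual operator. But $u_{\eta_1}'(a)$ is a strictly positive function on $\Om$ for each $a$ by Theorem~\ref{A1}, and $\psi_\star(\sigma)\in\Wqd$, so the double integral lies in $\mathrm{int}(\Wqqp)$ and has strictly positive pairing with the dual eigenfunctional, a contradiction. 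Therefore $\mathcal{F}_{\eta,(w,v)}(\eta_1,0,0)[\phi_\star,\psi_\star]$ does not belong to the range.

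I expect the main obstacle to be purely bookkeeping: the extra cross-diffusion term $-\Delta_D(\gamma\psi u_{\eta_1})$ in the off-diagonal part of $\mathcal{F}_{(w,v)}$, and the term $-\Delta_D(\gamma\psi u_{\eta_1}')$ in $\mathcal{F}_{\eta,(w,v)}$, must be carried along, but because the relevant solvability conditions only involve the scalar spectral projections onto $\Psi_1$ (for the kernel) and onto the dual eigenvector of $\xi H_{[-\beta_2 u_{\eta_1}]}$ (for transversality), these terms either integrate to something in $\mathrm{int}(\Wqqp)$ or drop out of the Fredholm-alternative pairing the same way they do in Lemma~\ref{A33}. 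The one point deserving care is verifying $\eta_1 r(H_{[2\alpha_1 u_{\eta_1}]})<1$, which is where the strict monotonicity in Lemma~\ref{A2} and the identity \eqref{sp} are genuinely used to rule out a larger kernel; all remaining steps are direct transcriptions of the corresponding arguments for $F$.
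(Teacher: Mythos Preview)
Your proof is correct and follows essentially the same route as the paper: the kernel computation is identical, and for transversality both arguments reduce to the obstruction that $(1-\xi H_{[-\beta_2 u_{\eta_1}]})\,\cdot\,$ equals a nonzero element of $\pm\mathrm{int}(\Wqqp)$, which the paper resolves by shifting with $\tau\psi_\star$ and invoking \cite[Thm.3.2]{AmannSIAMReview}, while you equivalently use the Fredholm alternative together with the positive dual eigenfunctional from Krein--Rutman. There is a harmless sign slip in your $\psi$-equation (the right-hand side should be $-\beta_2 u_{\eta_1}'\psi_\star$), but this does not affect the argument since either sign yields a nonzero pairing with the dual eigenfunctional.
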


\begin{proof}
That $\kk(\mathcal{F}_{(w,v)}(\eta_1,0,0))=\mathrm{span}\{(\phi_\star,\psi_\star)\}$ follows as in the proof of Lemma~\ref{A33}. To check the transversality condition, suppose $\mathcal{F}_{\eta,(w,v)}(\eta_1,0,0)[\phi_\star,\psi_\star]$ belongs to the range of $\mathcal{F}_{(w,v)}(\eta_1,0,0)$. Recall \eqref{Z1a}, \eqref{Z2a} and let $v\in\Wq$ be such that $v-T(\beta_2u_{\eta_1}v,\xi V)=-T(\beta_2u_{\eta_1}'\psi_\star,0)$. Choose $\tau>0$ with $\tau\Psi_1-v(0)\in\mathrm{int}(\Wqqp)$. Since $\psi_\star=T(\beta_2 u_{\eta_1}\psi_\star,\xi \Psi_\star)$, it follows that $p:=\tau\psi_\star-v$ satisfies
$$
\partial_a p-\Delta_D p-\beta_2u_{\eta_1} p=\beta_2u_{\eta_1}'\psi_\star\ ,\quad p(0)=\xi P\ ,
$$
from which we deduce 
$$
\big(1-\xi H_{[-\beta_2u_{\eta_1}]}\big) p(0)=\xi\beta_2\int_0^{a_m} b(a)\int_0^a \Pi_{[-\beta_2u_{\eta_1}]}(a,\sigma)\big(u_{\eta_1}'(\sigma)\psi_\star(\sigma) \big)\,\rd \sigma\,\rd a\ .
$$
However, invoking \cite[Thm.3.2]{AmannSIAMReview} and \eqref{eta1}, this is impossible since
$p(0)\in\mathrm{int}(\Wqqp)$ by the choice of $\tau$ and since the right hand side is positive and nonzero due to \eqref{10} and the positivity of $\psi_\star$ and of $u_\eta '$ stated in Theorem~\ref{A1}.
\end{proof}

As Corollary~\ref{C1} holds also for $\mathcal{F}$, we may proceed as in Subsection~\ref{sec 2.3} to derive from \cite[Thm.4.3,Thm.4.4]{ShiWang} that a continuum $\mathfrak{S}^+$ in $(\eta_*,\infty)\times\Wq\times\Wq$ of solutions to \eqref{7}-\eqref{6a} bifurcates from $(\eta_1,u_{\eta_1},0)$ satisfying the alternatives:
\begin{itemize}
\item[(i)] $\mathfrak{S}^+$ intersects with the boundary of $(\eta_*,\infty)\times\Wq\times\Wqh$, or
\item[(ii)] $\mathfrak{S}^+$ is unbounded in $(\eta_*,\infty)\times\Wq\times\Wqh$, or
\item[(iii)] $\mathfrak{S}^+$ contains a point $(\eta,u_\eta,0)$ with $\eta\not= \eta_1$, or
\item[(iv)] $\mathfrak{S}^+$ contains a point $(\eta,u_\eta-w,v)$ with $(w,v)\not=(0,0)$ and $(w,v)\in \im(\mathcal{F}_{(w,v)}(\eta_1,0,0))$.
\end{itemize}
Moreover, near the bifurcation point, $\mathfrak{S}^+$ is a continuous curve
$$
\Gamma^+=\big\{(\eta(s),u_{\eta(s)}-s\phi_\star-s\theta_1(s),s\psi_\star+s\theta_2(s))\,;\, 0<s<\ve\big\}
$$
for a continuous real-valued function $\eta$ and some continuous $\Wq$-valued functions $\theta_j$ with $\eta(0)=0$ and $\theta_j(0)=0$. Since $u_{\eta(s)}(0)$ and $\psi_\star=\Psi_1$ both belong to $\mathrm{int}(\Wqqp)$, it follows as in Lemma~\ref{pos} that $\Gamma^+$ is a subset of $(1,\infty)\times\Wqd\times\Wqd$ for $\ve>0$ sufficiently small. For the continuum $\mathfrak{S}$, given by $$\mathfrak{S}:=\mathfrak{S}^+\cap \big( [\eta_*,\infty)\times\Wq^+\times\Wq^+\big)\ ,$$ we have:

\begin{lem}\label{S}
$
\mathfrak{S}\setminus\{(\eta_1,u_{\eta_1},0)\}$ is a subset of $[\eta_*,\infty)\times\Wqd\times \Wqd
$
consisting of coexistence states $(\eta,u,v)$ to \eqref{7}-\eqref{6a}. The continuum $\mathfrak{S}$ is unbounded or it connects $(\eta_1,u_{\eta_1},0)$ to a solution $(\eta_*,u,v)$ of \eqref{7}-\eqref{6a} with $u,v\in\Wqd$.
\end{lem}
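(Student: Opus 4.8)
The plan is to mimic closely the proof of Lemma~\ref{C}, adapting the contradiction arguments to the branch $\mathfrak{B}_1$ instead of $\mathfrak{B}_2$. First I would show that the continuum $\mathfrak{S}^+$ cannot reach the boundary of $[\eta_*,\infty)\times\Wqd\times\Wqd$ at any point other than the bifurcation point $(\eta_1,u_{\eta_1},0)$. So suppose $(\eta_j,u_j,v_j)$ in $\mathfrak{S}\cap([\eta_*,\infty)\times\Wqd\times\Wqd)$ converges to some $(\eta,u,v)\not=(\eta_1,u_{\eta_1},0)$ with $u\equiv 0$ or $v\equiv 0$ (nonnegativity of the limits being inherited). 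The four subcases are handled exactly as in Lemma~\ref{C}: if $u\equiv v\equiv 0$, normalizing $\psi_j:=v_j/\|v_j\|_{\Wq}$ and passing to the limit via the bootstrapping of Lemma~\ref{A44} forces $\xi\,r(H_{[0]})=1$, contradicting $\xi<1$ and \eqref{1mio}; note the sign of the inequality is what differs from the $\xi>1$ case in Lemma~\ref{C}. If $u\equiv 0$ but $v\not\equiv 0$, then $v=v_\xi$ by Theorem~\ref{A1} — but this requires $\xi>1$, which is false here, so this subcase cannot occur at all. If $v\equiv 0$ but $u\not\equiv 0$, then $u=u_\eta$ with $\eta>1$ (or $\eta>\eta_*$, using that $-u_\eta\in\Wqd$ for $\eta<1$ so $u\in\Wqd$ forces $\eta>1$), so $(\eta,u_\eta,0)\in\mathfrak{B}_1$ is a bifurcation point to coexistence states, and normalizing $\psi_j:=v_j/\|v_j\|_{\Wq}$ gives in the limit $\psi(0)=\xi H_{[-\beta_2 u_\eta]}\psi(0)$, i.e. $\xi\,r(H_{[-\beta_2 u_\eta]})=1$; by \eqref{46} and \eqref{eta1} this forces $\eta=\eta_1$, contradicting $(\eta,u,v)\not=(\eta_1,u_{\eta_1},0)$.

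Having ruled out alternatives (i) and (iii) of the dichotomy (the latter because any $(\eta,u_\eta,0)$ with $\eta\not=\eta_1$ would be a distinct bifurcation point, handled by the same argument), I would next eliminate alternative (iv) by the transversality/positivity trick used at the end of Lemma~\ref{C}. Suppose $(\eta,u_\eta-w,v)\in\mathfrak{S}^+$ with $(w,v)\in\im(\mathcal{F}_{(w,v)}(\eta_1,0,0))$ and $(w,v)\not=(0,0)$, and write $(w,v)=\mathcal{F}_{(w,v)}(\eta_1,0,0)[\phi,\psi]$ for some $(\phi,\psi)\in\X_1$. From the second component of \eqref{Z1a} evaluated at $(w,v)=(0,0)$, $\psi-v$ solves a problem of the form $\partial_a(\psi-v)-\Delta_D(\psi-v)-\beta_2 u_{\eta_1}(\psi-v)=0$ with initial datum $\xi(\Psi-V)$, hence $\psi-v=T(\beta_2 u_{\eta_1}(\psi-v),\xi(\Psi-V))$, so $p:=\tau\psi_\star+\psi-v$ (with $\tau>0$ chosen so that $\tau\Psi_1+\psi(0)-v(0)\in\mathrm{int}(\Wqqp)$, using $\psi_\star=\Psi_1\in\mathrm{int}(\Wqqp)$ and \eqref{emb}) satisfies $(1-\xi H_{[-\beta_2 u_{\eta_1}]})p(0)=\xi\,V$ with $V\in(\Wqqp)\setminus\{0\}$ since $v\in\Wqd$. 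By \cite[Thm.3.2]{AmannSIAMReview} and \eqref{eta1} (the operator $\xi H_{[-\beta_2 u_{\eta_1}]}$ being compact, strongly positive, with simple eigenvalue $1$), this equation has no positive solution, contradicting $p(0)\in\mathrm{int}(\Wqqp)$. Hence alternatives (i), (iii), (iv) are all impossible and $\mathfrak{S}=\mathfrak{S}^+$, so we are left with (ii) — $\mathfrak{S}$ unbounded in $(\eta_*,\infty)\times\Wq\times\Wqh$ — or the case where $\mathfrak{S}^+$ reaches the boundary precisely in the $\eta$-component, i.e. $\eta\to\eta_*$, yielding a limiting solution $(\eta_*,u,v)$; the argument above already shows that in that limiting solution both $u$ and $v$ lie in $\Wqd$ (otherwise we would again reach one of the excluded subcases, now with the bound $\eta_*<1$ coming into play to identify $u=u_\eta$, $\eta>1$). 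Finally, the inclusion $\mathfrak{S}\setminus\{(\eta_1,u_{\eta_1},0)\}\subset[\eta_*,\infty)\times\Wqd\times\Wqd$ consisting of coexistence states follows because $\Gamma^+\subset(1,\infty)\times\Wqd\times\Wqd$ and $\mathfrak{S}$ cannot touch the boundary of $[\eta_*,\infty)\times\Wqd\times\Wqd$ except at the bifurcation point, by the first step.

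The main obstacle I anticipate is the bookkeeping around the extended branch $\mathfrak{U}_*$ and the interval $(\eta_*,\infty)$: one must be careful that in the subcase $v\equiv 0$, $u\not\equiv 0$ the identification $u=u_\eta$ via Theorem~\ref{A1} is only available once we know $u\in\Wqd$ forces $\eta>1$ — this uses the fact established before \eqref{G} that $-u_\eta\in\Wqd$ for $\eta\in(\eta_*,1)$, so a \emph{positive} nontrivial $u$ cannot be $u_\eta$ with $\eta\le 1$. A secondary subtlety is that, unlike in Theorem~\ref{T1}, the branch $\mathfrak{B}_2$ does not exist here (since $\xi<1$), so one of the four subcases of Lemma~\ref{C} simply drops out rather than producing a contradiction; this is what underlies the genuine appearance of alternative (ii) in the statement, since a continuum reaching $\eta=\eta_*$ cannot be excluded — the lack of a comparison principle, as remarked in the introduction, prevents ruling out the connection to a solution at $\eta_*\in(0,1)$.
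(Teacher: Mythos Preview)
Your approach is essentially the paper's: split into the case where $\mathfrak{S}^+$ leaves the positive cone (subcase analysis on $u\equiv 0$/$v\equiv 0$ forcing $\eta=\eta_*$) and the case where it does not (rule out alternative (iv) via the Krein--Rutman obstruction, conclude unboundedness). The organization is slightly muddled---you first announce that $\mathfrak{S}^+$ cannot reach the boundary of $[\eta_*,\infty)\times\Wqd\times\Wqd$ at all, but later allow $\eta\to\eta_*$; the clean statement is that the continuum cannot leave $\Wqd\times\Wqd$ except at the bifurcation point, while it \emph{can} hit the $\eta$-boundary, which is precisely what produces the second alternative of the lemma.

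There is a computational slip in your treatment of (iv). From the second component of \eqref{Z1a} at $(\eta_1,0,0)$ one has $\psi-v=T(\beta_2 u_{\eta_1}\psi,\xi\Psi)$, \emph{not} $T(\beta_2 u_{\eta_1}(\psi-v),\xi(\Psi-V))$; equivalently, $\partial_a(\psi-v)-\Delta_D(\psi-v)-\beta_2 u_{\eta_1}(\psi-v)=\beta_2 u_{\eta_1}v$ with $(\psi-v)(0)=\xi\Psi$. Carrying this through, $p:=\tau\psi_\star+\psi-v$ satisfies $\partial_a p-\Delta_D p-\beta_2 u_{\eta_1}p=\beta_2 u_{\eta_1}v$ with $p(0)=\xi P+\xi V$, and hence
\[
\big(1-\xi H_{[-\beta_2 u_{\eta_1}]}\big)p(0)=\xi V+\xi\beta_2\int_0^{a_m} b(a)\int_0^a\Pi_{[-\beta_2 u_{\eta_1}]}(a,\sigma)\big(u_{\eta_1}(\sigma)v(\sigma)\big)\,\rd\sigma\,\rd a\ ,
\]
with an extra positive integral term you have dropped. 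Your stated equation $(1-\xi H)p(0)=\xi V$ does not actually follow from your (incorrect) homogeneous equation for $\psi-v$---that would give $(1-\xi H)p(0)=0$. Fortunately the correct right-hand side is still positive and nonzero (since $v\in\Wqd$), so the contradiction via \cite[Thm.3.2]{AmannSIAMReview} and \eqref{eta1} goes through; but the intermediate computation needs to be fixed.
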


\begin{proof}
First suppose $\mathfrak{S}^+\setminus\{(\eta_1,u_{\eta_1},0)\}$ does not reach the boundary of  $(\eta_*,\infty)\times\Wqd\times\Wqd$, so $\mathfrak{S}=\mathfrak{S}^+$. Then neither (i) nor (iii) above is possible. Suppose (iv) occurs. Then there are $(\eta,u_\eta-w,v)\in \mathfrak{S}$ and \mbox{$(\phi,\psi)\in\Wq\times\Wq$} such that $(w,v)=\mathcal{F}_{(w,v)}(\eta_1,0,0)[\phi,\psi]$. Hence $p:=\kappa\psi_\star+\psi-v\in\Wq$, with $\kappa>0$ chosen such that $p(0)$ belongs to $\in\mathrm{int}(\Wqqp)$, satisfies
$$
\partial_a p -\Delta_D p-\beta_2u_{\eta_1}p=\beta_2u_{\eta_1} v\ ,\quad p(0)=\xi P+\xi V\ ,
$$
so that 
$$
\big(1-\xi H_{[-\beta_2u_{\eta_1}]}\big)p(0)=\xi V+\xi\beta_2\int_0^{a_m} b(a)\int_0^a\Pi_{[-\beta_2u_{\eta_1}]}(a,\sigma)\big(u_{\eta_1}(\sigma) v(\sigma)\big)\,\rd \sigma\,\rd a\ .
$$
Since $v\in \Wqd$ by assumption, this last equation does not admit a positive solution $p(0)$ according to \cite[Thm.3.2]{AmannSIAMReview} in view of \eqref{eta1}. However, this contradicts $p(0)\in\mathrm{int}(\Wqb^{2-2/q,+})$. So (iv) is impossible as well, and we conclude that if $\mathfrak{S}^+\setminus\{(\eta_1,0,v_\xi)\}$ does not reach the boundary of $(\eta_*,\infty)\times\Wqd\times\Wqd$, then $\mathfrak{S}=\mathfrak{S}^+$ is unbounded. Otherwise, suppose $\mathfrak{S}\setminus\{(\eta_1,0,v_\xi)\}$ reaches the boundary of \mbox{$(\eta_*,\infty)\times\Wqd\times\Wqd$} at a point $(\eta,u,v)\not= (\eta_1,u_{\eta_1},0)$ and choose a sequence $(\eta_j,u_j,v_j)$ in $\mathfrak{S}\cap ((\eta_*,\infty)\times\Wqd\times\Wqd)$ converging toward $(\eta,u,v)$. Since $u_j$ and $v_j$ are nonnegative and $\eta_j>\eta_*$, the limits $u$ and $v$ are nonnegative as well and $\eta\ge \eta_*$. So $u\equiv0$ or $v\equiv 0$ or $\eta=\eta_*$. We claim that necessarily $\eta=\eta_*$ and $u,v\in\Wqd$. We proceed as in Lemma~\ref{C}. If both $u\equiv 0$ and $v\equiv 0$, then the limit $\psi\in\Wqd$ of $\psi_j:=v/\|v_j\|_{\Wq}$ satisfies $\partial_a\psi-\Delta_D\psi=0$ with $\psi(0)=\xi\Psi$ leading to the contradiction $1=\xi r(H_{[0]})=\xi$ due to \eqref{1mio}. If $v\equiv 0$ but $u\not\equiv 0$, then $u\in\Wqd$ satisfies $\partial_au-\Delta_Du=-\alpha_1u^2$ and $u(0)=\eta U$ and thus $u=u_\eta$ with necessarily $\eta>1$ by the uniqueness statement of Theorem~\ref{A1}. Hence $\psi\in\Wqd$ satisfies  $\partial_a\psi-\Delta_D\psi=\beta_2u_\eta\psi$ with $\psi(0)=\xi\Psi$ giving the contradiction $\eta=\eta_1$ by \eqref{eta1}. If $u\equiv 0$ but $v\not\equiv 0$, then $v\in\Wqd$ satisfies $\partial_av-\Delta_Dv=-\beta_1v^2$ and $v(0)=\xi V$ what is impossible according to Theorem~\ref{A1} since $\xi<1$. Therefore, neither $u\equiv0$ nor $v\equiv 0$ and we conclude $\eta=\eta_*$. This proves the claim.
\end{proof}

\begin{lem}
There is no other bifurcation point to positive coexistence states on $\mathfrak{B}_1$.
\end{lem}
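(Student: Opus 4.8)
\medskip

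The plan is to argue exactly as in the step of the proof of Lemma~\ref{S} treating the case $v\equiv 0$, $u\not\equiv 0$. Suppose, to the contrary, that some $(\bar\eta,u_{\bar\eta},0)\in\mathfrak{B}_1$ with $\bar\eta>1$ and $\bar\eta\not=\eta_1$ were a bifurcation point to positive coexistence states. Then there is a sequence $(\eta_j,u_j,v_j)$ of coexistence states of \eqref{7}-\eqref{6a} with $u_j,v_j\in\Wqd$ and $(\eta_j,u_j,v_j)\to(\bar\eta,u_{\bar\eta},0)$ in $\R\times\Wq\times\Wq$; note that the limit component $u_{\bar\eta}$ is nontrivial since $\bar\eta>1$.

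First I would normalize the (nontrivial) second components: set $\psi_j:=v_j/\|v_j\|_{\Wq}\in\Wqd$, so that $\|\psi_j\|_{\Wq}=1$ and
\[
\partial_a\psi_j-\Delta_D\psi_j=-\beta_1 v_j\psi_j+\beta_2 u_j\psi_j\ ,\qquad \psi_j(0)=\xi\Psi_j\ .
\]
Since $(u_j,v_j)$ is bounded in $\Wq\times\Wq$, pointwise multiplication $\Wq\times\Wq\to\Wq$ is continuous for $q>n+2$, and $\|\psi_j\|_{\Wq}=1$, the right hand side and the initial value above stay bounded in $\Lq$ and $\Wqq$, respectively. Hence the bootstrapping argument from the proof of Lemma~\ref{A44}, applied to this (now linear) problem, shows that $(\psi_j)$ is bounded in $C^\ve(J,C^{2+\ve}(\bar{\Om}))\cap C^{1+\ve}(J,C^{\ve}(\bar{\Om}))$ for some $\ve>0$, and therefore relatively compact in $\Wq$. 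Passing to a subsequence, $\psi_j\to\psi$ in $\Wqd$ with $\|\psi\|_{\Wq}=1$. Because $v_j\to 0$ in $\Wq$ while $u_j\to u_{\bar\eta}$, passing to the limit — using that $(\partial_a-\Delta_D,\gamma_0)$ is a toplinear isomorphism onto $\Lq\times\Wqq$ — yields
\[
\partial_a\psi-\Delta_D\psi=\beta_2 u_{\bar\eta}\,\psi\ ,\qquad \psi(0)=\xi\Psi\ .
\]

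To conclude I would write $\psi(a)=\Pi_{[-\beta_2 u_{\bar\eta}]}(a,0)\psi(0)$, so that $\psi(0)\in\Wqqp\setminus\{0\}$ (otherwise $\psi\equiv0$, contradicting $\|\psi\|_{\Wq}=1$), and integrate against $b$ using convention \eqref{7} to get $\psi(0)=\xi H_{[-\beta_2 u_{\bar\eta}]}\psi(0)$. By the Krein--Rutman characterization in Lemma~\ref{A2} this forces $\xi\, r(H_{[-\beta_2 u_{\bar\eta}]})=1$. But by \eqref{46} the map $\eta\mapsto r(H_{[-\beta_2 u_{\eta}]})$ is strictly increasing on $(1,\infty)$, so the value $\eta_1$ fixed by \eqref{eta1} is the only one with this property; hence $\bar\eta=\eta_1$, the desired contradiction. (The endpoint $\bar\eta=1$ need not be considered, since $\mathfrak{B}_1$ is parametrized by $\eta>1$; were it to be included, the same normalization would produce $\psi$ solving $\partial_a\psi-\Delta_D\psi=0$, whence $1=\xi\, r(H_{[0]})=\xi$ by \eqref{1mio}, contradicting $\xi<1$.) Consequently $(\eta_1,u_{\eta_1},0)$ is the only bifurcation point on $\mathfrak{B}_1$ to positive coexistence states. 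The only point requiring care is verifying that the bootstrapping of Lemma~\ref{A44} transfers verbatim to the normalized sequence $(\psi_j)$ — which is immediate from the boundedness of $(u_j,v_j)$ in $\Wq\times\Wq$ and the continuity of multiplication, the linear coefficients $\beta_1 v_j$, $\beta_2 u_j$ being, if anything, easier to control than the quadratic nonlinearities in Lemma~\ref{A44}; I do not anticipate any genuine obstacle, as the argument is a direct transcription of one already carried out above.
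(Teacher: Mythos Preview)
Your argument is correct and follows essentially the same route as the paper: you reproduce precisely the case $v\equiv 0$, $u\not\equiv 0$ from the proof of Lemma~\ref{S}, normalizing $v_j$, extracting a convergent subsequence via the bootstrapping of Lemma~\ref{A44}, and using Krein--Rutman together with the strict monotonicity \eqref{46} to force $\bar\eta=\eta_1$. The paper's own proof is a one-line reference back to that case of Lemma~\ref{S}, so your write-up is simply a more detailed unfolding of the same idea.
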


\begin{proof}
The assumption $(\eta,u_\eta,0)\in\mathfrak{B}_1$ being a bifurcation point to positive coexistence states corresponds to the case $\eta>1$, $u\not\equiv 0$, and $v\equiv 0$ in the proof of Lemma~\ref{S} and analogously implies $\eta=\eta_1$.
\end{proof}

This completes the proof of Theorem~\ref{T22}.

\section{Proof of Theorem~\ref{T222}}\label{sec 4}

Again, the main part of the proof of Theorem~\ref{T222} is a straightforward modification of Section~\ref{sec 2}, and we thus omit details. Let $\eta>1$ be fixed. Linearization around $(\xi,u_\eta,0)\in\mathfrak{T}_2$ entails the existence of a continuum $\mathfrak{R}^+$ in $\R\times\Wq\times\Wq$ of solutions to \eqref{7}-\eqref{6a}  bifurcating from $(\xi_0,u_{\eta},0)$, where $\xi_0:=\xi_0(\eta)\in (0,1)$ is given by
\bqn\label{xi0}
\xi_0:=\frac{1}{r(H_{[-\beta_2 u_{\eta}]})}\ .
\eqn
Near the bifurcation point $(\xi_0,u_{\eta},0)$, $\mathfrak{R}^+$ is a continuous curve in $\R^+\times\Wqd\times\Wqd$ and it can be shown exactly as in Lemma~\ref{C} or Lemma~\ref{S} that $\mathfrak{R}:=\mathfrak{R}^+\cap (\R^+\times\Wq^+\times\Wq^+)$ satisfies the alternatives:
\begin{itemize}
\item[(a)] $\mathfrak{R}\setminus\{(\xi_0,u_{\eta},0)\}$ is unbounded in $\R^+\times \Wqd\times \Wqd$, or
\item[(b)] $\mathfrak{R}$ reaches the boundary of $\R^+\times \Wqd\times \Wqd$ at some point $(\xi,u,v)\not=(\xi_0,u_{\eta},0)$ with $u\equiv 0$ or $v\equiv 0$.
\end{itemize}
If (b) occurs, choose a sequence $(\xi_j,u_j,v_j)$ in $\mathfrak{R}\cap (\R^+\times \Wqd\times \Wqd)$ converging toward $(\xi,u,v)\not=(\xi_0,u_{\eta},0)$. Putting $\phi_j:=u_j/\|u_j\|_{\Wq}$ and $\psi_j:=v_j/\|v_j\|_{\Wq}$, we may assume that $\phi_j\rightarrow\phi$ and $\psi_j\rightarrow\psi$ in $\Wq$. If  both $u\equiv 0$ and $v\equiv 0$, then $\phi\in\Wqd$ satisfies $\partial_a\phi-\Delta_D\phi=0$ with $\phi(0)=\eta\Phi$ and so $1=\eta r(H_{[0]})$ contradicting \eqref{1mio} and $\eta>1$. If $u\not\equiv 0$ but $v\equiv 0$, then $u=u_\eta$ according to Theorem~\ref{A1}, and $\psi\in\Wqd$ satisfies $\partial_a\psi-\Delta_D\psi=\beta_2u_\eta\psi$ with $\psi(0)=\xi\Psi$. Hence $\xi=\xi_0$ by \eqref{xi0} what is impossible since $(\xi,u,v)\not=(\xi_0,u_{\eta},0)$. Therefore, the only possibility is $u\equiv 0$ but $v\not\equiv 0$. In this case necessarily $\xi>1$ and $v=v_\xi$ in view of Theorem~\ref{A1}. So $\mathfrak{R}$ joins up with $\mathfrak{T}_1$ at $(\xi,0,v_\xi)$. We remark that then the relation
\bqn\label{xi1}
\eta r(G_{\xi})=1
\eqn
with $G_{\xi}$ given in \eqref{Z111} must hold, since $\phi\in\Wqd$ satisfies \eqref{Z0}. That no other bifurcation point(s) on $\mathfrak{T}_2$ or $\mathfrak{T}_1$ exist(s) is immediate by the previous observations. This yields Theorem~\ref{T222}.

\begin{rem}\label{R1}

Since the operator $A_\xi$ in \eqref{Axi} does not yield a suitable maximum principle, there is no analogue of Lemma~\ref{A2} for the spectral radius of $G_\xi$ and the only information we have on $r(G_{\xi})$ is that it is positive for each $\xi>1$ as observed in Section~\ref{sec 2.2}. Consequently, given $\eta>0$, we cannot decide {\it a priori} whether \eqref{xi1} holds for some $\xi>1$. However, for $\eta>1$, if $\mathfrak{R}$ joins up with $\mathfrak{T}_1$, then \eqref{xi1} must occur and the connection point $(\xi,0,v_\xi)$ on $\mathfrak{T}_1$ is determined by this relation. Then again, the relation \eqref{xi1} is also a sufficient condition for the existence of a continuous curve of positive coexistence solutions bifurcating from $\mathfrak{T}_1$. 

The same difficulty arises when considering bifurcation from $\mathfrak{T}_1$ with respect to $\xi$ when $\eta<1$ is fixed. In this case, \eqref{xi1} is again a necessary and sufficient condition for the existence of a bifurcation point  on $\mathfrak{T}_1$ to a curve of positive coexistence states, which then extends to an unbounded continuum.

\end{rem}

\end{document}